\newif\ifxetexorluatex
		\newcommand{\blx@nowarnpolyglossia}{}
\Crefname{assumption}{Assumption}{Assumptions}
\pgfplotsset{compat=newest}
\tikzset{every picture/.style={line cap=butt,thick}}
\newtheorem{theorem}{Theorem}
\newtheorem{lemma}[theorem]{Lemma}
\newtheorem{corollary}[theorem]{Corollary}
\newtheorem{conjecture}[theorem]{Conjecture}
\newtheorem{observation}[theorem]{Observation}
\theoremstyle{definition}
\newtheorem{definition}[theorem]{Definition}
\newtheorem{assumption}[theorem]{Assumption}
\newtheorem{remark}[theorem]{Remark}
\DeclareMathOperator{\NCA}{NCA}
\DeclareMathOperator{\interior}{int}
\DeclareMathOperator{\closure}{cl}
\newcommand{\SymmDiff}{\operatornamewithlimits{\scalebox{1.3}{$\Delta$}}}
\title{Covering simple orthogonal polygons with $r$-stars}
\author{%
	Tamás Róbert Mezei\footnote{Supported in part by the National Research,
    Development and Innovation Office (NKFIH) grants SNN-135643 and K-132696.} \\
	\texttt{mezei.tamas.robert@renyi.hu}
}
\date{%
	\small Alfréd Rényi Institute of Mathematics \\ Reáltanoda~u.~13--15, 1053 Budapest, Hungary \\
	\vspace{12pt}
	\normalsize \today
	\vspace{-12pt}
}
\begin{document}

\maketitle

\centerline{\emph{Dedicated to the memory of János Urbán.}}

\begin{abstract}
	We solve the $r$-star covering problem in simple orthogonal polygons, also
	known as the point guard problem in simple orthogonal polygons with
	rectangular vision, in quadratic time.

    \textbf{Keywords:} $r$-star; $r$-visibility; simple orthogonal polygons;
    polygon cover; art gallery;
\end{abstract}

\section{Introduction}

Art gallery problems in general ask about the minimum number of guards with
given power \emph{(for example, static or mobile)} and type of vision
\emph{(line of sight, rectangular vision, etc.)} required to control the
gallery. A \emph{point guard} is a point in the interior of a polygon, and it
covers any point in the closed polygon (\emph{the gallery}) to which the guard
can be joined by a line segment contained by the closed polygon (\emph{line of
sight vision}). The art gallery theorem due to \citet{chvatal_combinatorial_1975} states that
given an $n$-vertex simple polygon, $\lfloor\frac{n}{3}\rfloor$ point guards are
sufficient and sometimes necessary to cover the closed polygon. In 1980 the
sharp bound for the special case of $n$-vertex simple orthogonal polygons was
determined to be $\lfloor \frac{n}{4}\rfloor$ by \citet{kahn_traditional_1983}.

\medskip

However, as \citet{schuchardt_two_1995} proved, determining the exact number
of point guards is \textsc{NP}-hard even in simple orthogonal polygons. To have a chance at
computing a minimum cardinality set of covering point guards in polynomial time, line of
sight vision needs to be restricted.

\medskip

Two points in an orthogonal polygon (using axis-parallel sides) have
\emph{rectangular vision} or \emph{$r$-vision} of each other if there is a
non-degenerate axis-parallel rectangle that contains both points and the rectangle is
contained in the region bounded by the polygon. An $r$-star is a region
that contains a point from which every point of the region is
$r$-visible.  To keep the terminology concise, from now on, a point guard equipped
with $r$-vision is referred to as an \emph{$r$-guard}.

\medskip

\citet{gyori_short_1986,orourke_alternate_1983} independently proved that there
is a stronger combinatorial theorem behind the orthogonal art gallery theorem of
\citet{kahn_traditional_1983}: any $n$-vertex simple orthogonal polygon can be
partitioned into at most $\lfloor\frac{n}{4}\rfloor$ simple orthogonal polygons
of at most 6 vertices. Notice, that this immediately implies that the extremal
result of \citeauthor{kahn_traditional_1983} holds even for $r$-guards. In other
words, $r$-vision is a reasonable restriction of line of sight vision in the
case of orthogonal polygons, because it performs equally in the extremal case.

\medskip

The seminal result of \citet{worman_polygon_2007} shows that finding the minimum
cardinality set of $r$-guards that cover a simple orthogonal polygon is
more feasible than using guards with line of sight vision.
\begin{theorem}[\citet{worman_polygon_2007}]
	A minimum cardinality $r$-star cover of a simple orthogonal polygon can be found in
	${\mathcal{O}}(n^{17}\cdot\mathrm{poly}(\log n))$ time.
\end{theorem}
The algorithm of \citeauthor{worman_polygon_2007} is still quite slow in
practice. Before we describe approximation results and faster algorithms for
special cases of the problem, let us review what is considered
a solution to the $r$-star cover problem in the literature.
\begin{enumerate}[label={(\Alph*)}]
	\item\label{type:A} The minimum number of $r$-stars that cover the whole region, and a
		list of the vertices of each $r$-star shaped polygon in the cover.
		This is the canonical meaning, and we will refer to it simply as the
		$r$-star cover problem.
    \item\label{type:B} A minimum cardinality set of $r$-guards, such that any
        point in the whole region is $r$-visible to at least one of the listed
        guards. We will refer to this variant as the $r$-guard cover problem.
    \item\label{type:C} Counting the minimum number of $r$-guards necessary to
        cover the region. This is equivalent to counting the minimum number of
        $r$-stars necessary to cover the region.
\end{enumerate}
\citet{culberson_orthogonally_1989} showed, that there exists a family of simple
orthogonal polygons, such that any type~\ref{type:A} solution where the
$r$-stars are maximal has space complexity $\Omega(n^2)$ in the size of the
output for an infinitely large subset of inputs. However, no such lower bound is
known for type~\ref{type:B} and~\ref{type:C} solutions.

A class-$k$ orthogonal polygon (where $0\le k\le 4$) is a simple orthogonal
polygon that has dents along at most $k$ directions out of the 4 possible
axis-parallel directions. In class-2a polygons, these two directions are
parallel, and in class-2b polygons the directions of the dents are orthogonal.
In the type~\ref{type:A} sense, the following algorithm is asymptotically
optimal.
\begin{theorem}[\citet{keil_minimally_1986}]
	The $r$-star cover problem can be solved in $\mathcal{O}(n^2)$ time in
	class-2a orthogonal polygons.
\end{theorem}
The result has been extended in two different directions.
\begin{theorem}[\citet{culberson_orthogonally_1989}]
	The $r$-star cover problem can be solved in $\mathcal{O}(n^2)$ time in
	class-2b orthogonal polygons. The count of the minimum number of $r$-guards necessary to
	cover a class-2 orthogonal polygon can be computed in $\mathcal{O}(n)$ time.
\end{theorem}
For class-3 polygons, an output-sensitive log-linear algorithm exists.
\begin{theorem}[\citet{palios_minimum_2015}]
    A minimum cardinality set of $r$-guards necessary to
	cover a class-3 orthogonal polygon can be computed in $\mathcal{O}(n+k\log
    k)$ time, where $k$ is the cardinality of the output solution.
\end{theorem}

In the realm of approximation algorithms, even faster algorithms have been
discovered.\@ \citet{lingas_linear-time_2012} gave a linear time approximation algorithm for
the $r$-guarding problem. Their proof partitions simple orthogonal polygons into staircase
shaped regions, where the problem can be solved exactly in linear time in the
type~\ref{type:B} sense.
\begin{theorem}[\citet{lingas_linear-time_2012}]\label{thm:lingas}
	A $3$-approximation solution to the $r$-guard cover problem can be computed
    in $\mathcal{O}(n)$ time.
\end{theorem}
A slightly better approximation of the minimum number of $r$-guards can be computed
similarly efficiently.
\begin{theorem}[\citet{gyori_mobile_2019}]\label{thm:vs}
    An $\frac83$-approximation of the count of the minimum number of $r$-guards
    required to cover $P$ can be computed in $\mathcal{O}(n)$ time.\footnote{The author of this paper
	has claimed that the algorithm can actually provide a set of $r$-guards in
    linear time, but only a sketch of the proof was presented at
    ICGT~2018~\cite{mezei_linear_2018}. The complete proof was never published.}
\end{theorem}
Guards equipped with $r$-vision that can move along an axis-parallel line
segment in the gallery are called sliding cameras by~\citet{katz_guarding_2011}
(in general polygons, guards that can move along a line segment are called
mobile guards). \Cref{thm:vs} is based on an extremal result that established an
inequality between the minimum number of $r$-guards and the minimum number of
horizontal and vertical sliding cameras required to control the gallery. The
main computational result behind \Cref{thm:vs} is the following theorem.
\begin{theorem}[\citet{gyori_mobile_2019}]\label{thm:linear_mhsc}
	The {minimum cardinality horizontal sliding cameras (MHSC)} problem can be solved in
    $\mathcal{O}(n)$ time.
\end{theorem}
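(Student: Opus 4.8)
The plan is to reduce the continuous covering problem to a finite, purely combinatorial one and then to exploit the simple connectivity of $P$ to solve that combinatorial problem by a single sweep. First I would show, by a standard exchange argument, that there is an optimal horizontal sliding camera system in which every camera is an \emph{inclusion-maximal} horizontal segment of $P$: any camera can be slid and extended to the maximal horizontal segment containing it without losing any covered point, so nothing is lost by restricting to the finitely many maximal horizontal segments. This already makes the search space finite; the next goal is to make the set of points that must be covered finite as well.

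Next I would pass to the orthogonal (trapezoidal) decomposition of $P$ obtained by extending every horizontal edge horizontally until it meets the boundary, which partitions $P$ into $O(n)$ axis-parallel rectangular cells and can be computed in linear time for a simple polygon. For a maximal horizontal segment $s$ I would describe the region it $r$-covers as the union of the cells reachable from $s$ by a single spanning rectangle; the key geometric lemma is that for each cell it suffices to test a single representative point, so that covering $P$ is equivalent to covering the finite set of cells. The coverage relation \emph{camera $\covers$ cell} is then the object to analyse.

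The heart of the argument is a structural claim: because $P$ has no holes, the cells and the maximal horizontal segments organise into a tree (the dual/adjacency tree of the horizontal decomposition), and along this tree the set of cameras covering a fixed cell, and dually the set of cells covered by a fixed camera, form \emph{contiguous (interval/laminar) families}. Granting this, the minimum camera problem becomes an interval point-cover, equivalently a laminar hitting-set problem, which I would solve greedily: process the cells in the tree/sweep order and, whenever a cell is still uncovered, commit the camera that reaches it and extends coverage as far as possible. Optimality of the greedy choice I would certify by exhibiting a matching packing of pairwise \enquote{independent} cells (cells that no single camera can simultaneously cover), yielding a min--max equality between the number of chosen cameras and a combinatorial lower bound, hence exact optimality rather than mere approximation.

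Finally I would verify the running time. The decomposition, the dual tree, and the interval endpoints are all obtainable in $O(n)$ time, and crucially the cells arrive already sorted along the boundary traversal of $P$, so the greedy sweep requires no separate sorting step and runs in linear time; assembling the pieces gives the claimed bound. The main obstacle I expect is the structural claim itself: proving that simple connectivity forces the coverage families to be contiguous, and that the greedy selection and the independent packing match exactly, is precisely where the geometry of $r$-vision interacts with the global topology of $P$, and it is what must be handled carefully to guarantee that the combinatorial reduction is loss-free.
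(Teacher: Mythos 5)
First, note that the paper does not prove this theorem itself: it is imported from \cite{GyM2017}, and the paper only records that the algorithm rests on linear-time triangulation and nearest-common-ancestor queries, while building the scaffolding (\Cref{def:geometrytograph}, the $R$-tree $T_H$, \Cref{lemma:pathG}) that encodes it. Your overall strategy --- finite combinatorial reduction, a tree structure with interval-like coverage families, greedy selection certified by a packing of pairwise independent objects --- is indeed the right skeleton and matches the spirit of that algorithm. But there is a concrete gap in your reduction step. A maximal horizontal camera lying in a horizontal slice $h$ covers exactly $\bigcup_{v\in\Gamma(h)}v$, i.e.\ a union of \emph{vertical} slices; it is all-or-nothing on vertical slices but only \emph{partial} on the cells of the horizontal decomposition you chose. (In a plus-shaped polygon, a camera in the top arm covers only the middle third of the central horizontal cell.) So ``one representative point per cell of the horizontal trapezoidation'' is not equivalent to covering $P$, and the claimed equivalence is false as stated. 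The correct finite reduction is one representative interior point per \emph{vertical} slice: every point of $P$ lies in some vertical slice, a camera in $h$ covers all of $v$ iff $hv\in E(G)$, and it covers no interior point of $v$ otherwise.

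Second, the ``structural claim'' you defer as the main obstacle is exactly \Cref{lemma:pathG} applied to the right objects: for a fixed vertical slice $v$, the set of admissible cameras is $\Gamma(v)$, which is the vertex set of a path in the tree $T_H$ of horizontal slices (proved from the convexity of $v$ and the simplicity of $P$, not from contiguity of cells covered by a camera, which need not hold in your sense). With that, MHSC becomes: pierce the family of paths $\{\Gamma(v)\mid v\in G_V\}$ in the tree $T_H$ with fewest vertices. The min--max you want is then the Helly property of subtrees of a tree (minimum piercing equals maximum number of pairwise disjoint paths), and the greedy order is not ``boundary traversal order'' but decreasing depth of $\min\Gamma(v)=\NCA(\partial\Gamma(v))$ in the rooted $T_H$ (cf.\ \Cref{lemma:minGammaNCA}); when a path is unpierced, place a camera at its top vertex. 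Linear time then follows from \Cref{thm:RtreeLinear} and \Cref{thm:NCA}. So the proposal is repairable, but as written the representative-point lemma fails for the decomposition you chose, and the key path/Helly structure is asserted rather than established.
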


\paragraph{Contribution.}
The objective of this paper is to prove the following result.
\begin{theorem}\label{thm:alg_typeA}
	A minimum cardinality set of covering $r$-stars of a simple orthogonal polygon can be
    computed in $\mathcal{O}(n^2)$ time.
\end{theorem}
\Cref{thm:alg_typeA} is asymptotically optimal in the sense that there exist
problems for which the description of every minimum cardinality cover by
\emph{maximal} $r$-stars that consists
of the vertex lists of the $r$-stars has size
$\Omega(n^2)$, see~\citet{culberson_orthogonally_1989}. The proof of
\Cref{thm:alg_typeA} entails three parts, namely \Cref{thm:Wk_covers_G},
\Cref{thm:independent}, and \Cref{thm:running_time}. The main idea behind the
proof of \Cref{thm:alg_typeA} is inspired by the algorithm of 
\Cref{thm:linear_mhsc}.

\medskip

Like prior work on orthogonal
polygons~\cite{motwani_covering_1988,culberson_orthogonally_1989,worman_polygon_2007,katz_guarding_2011},
our approach translates the $r$-star cover problem into a more general
(extremal) graph theory problem. The distinguishing feature of our approach is
that regions of the gallery are mapped to edges instead of vertices; visibility
between edges will hold if both edges are present in a homomorphic image of
$C_4$. In this graph, a minimum cardinality of covering $r$-guards will be
computed by \Cref{alg:main}. The optimality of the solution provided by
\Cref{alg:main} will be shown using the min-max equality described in
\Cref{thm:independent}. In the following sections, we argue and demonstrate that
\Cref{thm:alg_typeA} holds.

\section{Preliminaries and tree-based bipartite graphs}\label{sec:prelim}
Let $P$ be a simple orthogonal polygon, that is, a polygon that does not
intersect itself and whose sides are axis-parallel. The region bounded by
such a polygon does not have holes.
Let $G_H$ be the set of internally disjoint rectangles obtained by cutting
horizontally at each reflex vertex of $P$; we call these the \emph{horizontal
slices}. The horizontal slices cover the region bounded by $P$.
Similarly, let the \emph{vertical slices} $G_V$ be
defined analogously for vertical cuts of $P$. The vertical slices also cover
$P$. Note that $|G_H|+|G_V|$ is at most the number of vertices of $P$, because
every reflex vertex of $P$ is incident on one horizontal and one
vertical cut. Furthermore, every cut is incident on at most two reflex vertices,
therefore the number of vertices of $P$ is at most $2(|G_H|+|G_V|)+4$.
We will describe the complexity of the problems as a function of
$n=|G_H|+|G_V|$.
\begin{definition}\label{def:rastergraph}
Let $G$ be the intersection graph of $G_H$ and $G_V$, i.e.,
\begin{equation*}
G=\left(G_H,G_V;\left\{\{h,v\} \Big.\mid h\in G_H,\ v\in G_V,\
\interior(h)\cap \interior(v)\neq\emptyset\right\}\right).
\end{equation*}
In other words, a horizontal and a vertical slice are joined by an edge if and
only if their interiors intersect. We refer to $G$ as the \emph{raster
graph} of $P$.
\end{definition}

Note, that $G_H$ and $G_V$ are disjoint sets,
unless $|G_H|=|G_V|=1$; since in this case \Cref{thm:alg_typeA}
is trivial, from now on, let us assume that $G_H\cap G_V=\emptyset$. Because of
this, we may define
\begin{equation*}
    hv=vh=\{h,v\}\qquad \forall h\in G_H\ \forall v\in G_V
\end{equation*}
without risk of ambiguity. When possible, we will refer to edges in row-column
order $hv$, because it mimics how matrices are usually indexed. However, for some
$st\in E(G)$ where it is not known a priori whether $s\in G_H$ or $s\in G_V$, adherence
to the previous convention cannot be guaranteed.

The \emph{pixel} of an edge $hv\in E(G)$ is $h\cap v$. The \emph{set of pixels}
\begin{equation*}
\mathrm{pixels}(P)=\big\{\, h\cap v \mid hv\in E(G)\big\}
\end{equation*}
is obviously a cover of the region bounded by $P$.
\begin{definition}
    Let $\Gamma(s)$ --- absent a subscript --- denote the set of neighbors
    joined to a vertex $s\in G_H\cup G_V$ in the graph $G$.
\end{definition}

\begin{definition}\label{def:rvisionedge}
    An edge $hv\in E(G)$ is $r$-visible to $h'v'\in
    E(G)$ if and only if both $hv',h'v\in E(G)$. We denote this symmetric
    relation by $hv\boxdot h'v'$.
\end{definition}
\Cref{def:rvisionedge} is demonstrated on \Cref{fig:rvisionedge}.
The next lemma translates $r$-vision in a simple orthogonal polygon to $r$-vision
in its raster graph.

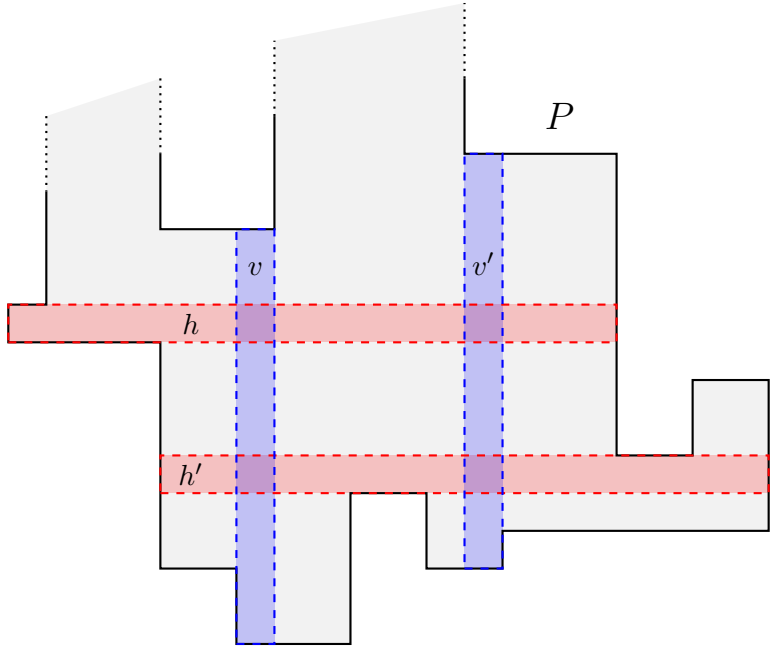
\begin{figure}[H]
	\centering
	\begin{tikzpicture}
        \fill[gray,opacity=0.1] (-1.5,5)--(-1.5,2.5)--(-2,2.5)--(-2,2)--(0,2)--(0,-1)--(1,-1)--(1,-2)--(2.5,-2)--(2.5,0)--(3.5,0)--(3.5,-1)--(4.5,-1)
            --(4.5,-0.5)--(8,-0.5)--(8,1.5)--(7,1.5)--(7,0.5)--(6,0.5)--(6,4.5)--(4,4.5)--(4,6.5)
            -- (1.5,6)--(1.5,3.5)--(0,3.5)--(0,5.5);

        \fill[red, fill opacity=0.2] (0,0) rectangle (8,0.5);
        \fill[red, fill opacity=0.2] (-2,2) rectangle (6,2.5);
        \fill[blue,fill opacity=0.2] (1,-2) rectangle (1.5,3.5);
        \fill[blue,fill opacity=0.2] (4,-1) rectangle (4.5,4.5);

        \draw[dotted] (-1.5,5)--(-1.5,4);
        \draw[dotted] (0,5.5)--(0,4.5);
        \draw[dotted] (1.5,5)--(1.5,6);
        \draw (1.5,5)--(1.5,3.5)--(0,3.5)--(0,4.5);
        \draw (-1.5,4)--(-1.5,2.5)--(-2,2.5)--(-2,2)--(0,2)--(0,-1)--(1,-1)--(1,-2)--(2.5,-2)--(2.5,0)--(3.5,0)--(3.5,-1)--(4.5,-1)
            --(4.5,-0.5)--(8,-0.5)--(8,1.5)--(7,1.5)--(7,0.5)--(6,0.5)--(6,4.5)--(4,4.5)--(4,5.5)
            edge[dotted] (4,6.5);
        \draw[thick,dashed,red] (0,0) rectangle (8,0.5);
        \draw[thick,dashed,red] (-2,2) rectangle (6,2.5);
        \draw[thick,dashed,blue] (1,-2) rectangle (1.5,3.5);
        \draw[thick,dashed,blue] (4,-1) rectangle (4.5,4.5);

        \node[anchor=center] at (5.25,5) {\Large $P$};
        \node[anchor=center] at (0.4,0.25) {$h'$};
        \node[anchor=center] at (0.4,2.25) {$h$};
        \node[anchor=south] at (1.25,2.75) {$v$};
        \node[anchor=south] at (4.25,2.75) {$v'$};
	\end{tikzpicture}
    \caption{The simple orthogonal polygon $P$ is drawn with solid segments, and
        the region bounded by it is filled with gray.
        The horizontal $h,h'\in G_H$ and vertical slices $v,v'\in
    G_V$ are filled with red and blue, respectively, while their boundaries are
    contoured with dashed lines. Note, that $hv,hv',h'v,h'v'\in E(G)$, thus $hv\boxdot h'v'$ holds.}\label{fig:rvisionedge}
\end{figure}

\begin{lemma}\label{lemma:rvisionedge}
    Let $x\in \mathbb{R}^2$ be arbitrary. Then
    \begin{equation*}\label{eq:rvisionedge}
            \{ y\in \mathbb{R}^2 \mid y\text{\ $r$-visible to $x$ in $P$}\}
            =\bigcup_{\substack{x\in h\cap v \\ hv\in
            E(G)}}\bigcup_{\substack{hv\boxdot h'v' \\ h'v'\in E(G)}} h'\cap v'.
    \end{equation*}
\end{lemma}
\begin{proof}
    We prove the equation by showing that the two sides contain each other.
    \begin{itemize}
        \item Let $hv,h'v'\in E(G)$ such that $hv\boxdot h'v'$, and suppose that
            $x\in h\cap v$ and $y\in h'\cap v'$. By
            $r$-visibility, we know that $hv',h'v\in E(G)$. Note, that the
            convex hull of $h\cap v$ and $h\cap v'$ is contained in $h$, since
            $h$ is a rectangle. Similar statements hold for $v,h',v'$. Since $P$
            is simple, the convex hull of $(h\cap v)\cup (h\cap v')\cup (h'\cap
            v)\cup (h'\cap v')$ is contained in the region bounded by $P$, therefore $y$ is
            $r$-visible to $x$.
    
        \item Suppose $y$ is $r$-visible to $x$ in $P$. Let $R$ be a minimum
            size axis-parallel rectangle that contains both $x$ and $y$. Extend
            the pairs of parallel sides of $R$ until all sides intersect the polygon
            $P$; let the extended rectangle be $R'$. Then $R'$ is the union of
            pixels. Take any $hv,h'v'\in E(G)$ such that $x\in h\cap v$, $y\in
            h'\cap v'$, and both $h\cap v,h'\cap v'\subseteq R'$. Because $R'$
            is a rectangle that intersects the interiors of $h,h',v,v'$ each, we
            have $\interior(h)\cap \interior(v')\neq\emptyset$ and
            $\interior(h')\cap \interior(v)\neq\emptyset$.
    \end{itemize}
\end{proof}
\begin{lemma}\label{lemma:restriction_rguard}
    Requiring that $r$-guards be placed in the interior of pixels does not
    reduce the generality of the $r$-guarding problem. The general problem can
    be reduced to the restricted version where guards are placed in the interior
    of pixels in $\mathcal{O}(n)$.
\end{lemma}
\begin{proof}[Proof.]
    Let $\delta\in \mathbb{R}^+$ be the minimum non-zero difference between the $x$-coordinates or $y$-coordinates
    of two vertices of $P$.
    Let $S$ be an axis-parallel square centered on the origin with sides of
    length $2\varepsilon$, where $\varepsilon=\frac{1}{4}\delta$. Let $Q$ be
    the simple orthogonal polygon bounding the Minkowski-sum of $S$ and the
    region bounded by $P$. The orientation of the corner (which way the right
    angle faces) at a vertex $v$ of $P$ determines whether $\pm\varepsilon$ has
    to be added to the coordinates of $v$ to obtain its image in $Q$. The choice
    of $\varepsilon$ guarantees that $Q$ is not self-intersecting and that every
    pixel of $Q$ has a minimum side length of $2\varepsilon$. Furthermore, if a
    point in the region bounded by $Q$ is at least $\varepsilon$ distance from
    $Q$, then it must be in the closed region bounded by $P$.

    By \Cref{lemma:rvisionedge} there exists
    a minimum cardinality set $X$ of $r$-guards of $P$ such that every $r$-guard is
    located at the corner of some pixel of $P$. Observe, that an $x\in X$ is not
    contained by the boundary of any pixel of $Q$. Note
    also, that the Minkowski-sum of $S$ and a rectangle is still a rectangle,
    therefore $X$ also covers $Q$.

    \medskip

    In the other direction, let $Y$ be a minimum cardinality covering set of $r$-guards of $Q$ that
    avoid the boundaries of the pixels. By \Cref{lemma:rvisionedge}, we may
    choose $Y$ such that every $r$-guard $y\in Y$ is
    at the center of its enveloping pixel in $Q$. First, observe that every
    $y\in Y$ is also contained in the region bounded by $P$, because the center
    of a pixel of $Q$ is at least $\varepsilon$ distance away from $Q$.
    Similarly, the center $x$ of a pixel of $P$ is at least $\varepsilon$ distance
    away from the boundary of the pixel of $Q$ containing $x$. By
    \Cref{lemma:rvisionedge}, if $x$ is $r$-visible to $y$ in $Q$, then a
    maximal rectangle $R$ containing both $x$ and $y$ in $Q$ also contains $x+S$
    and $y+S$. Let $R'$ be the rectangle obtained from $R$ by moving each corner
    of $R$ towards its center along both the horizontal and the vertical axes by
    $\varepsilon$. Then $R'$ is contained in the region bounded by $P$, and both
    $x,y\in R'$. It follows that the center of every pixel of $P$ is $r$-visible to
    $Y$ (in $P$), thus $Y$ covers every pixel of $P$ due to
    \Cref{lemma:rvisionedge}.
\end{proof}
\begin{assumption}\label{def:rguard_restriction}
    We assume that $r$-guards in $P$ avoid the boundary $\partial(h\cap v)$ for
    every $hv\in E(G)$. Equivalently, an $r$-guard of $P$ must be contained in
    $\interior(h\cap v)$ for some $hv\in E(G)$.
\end{assumption}
Now \Cref{lemma:rvisionedge} implies that the region visible to an $r$-guard $x$
is determined by the unique edge $hv\in E(G)$ such that $x\in \interior(h\cap
v)$. In this way, an $r$-guard $x$ corresponds to the edge $hv\in E(G)$ whose
pixel contains it ($x\in h\cap v$). By \Cref{lemma:rvisionedge}, the
region $r$-visible to $x$ is the union of the pixels of edges of $G$ that
$r$-visible to $hv$.
Since a sliding camera covers the union of the visibility region of the
$r$-guards on its patrol, we may also make the following assumption without loss
of generality.
\begin{assumption}\label{def:sliding_restriction}
    The patrol of a horizontal sliding camera is not a subset of $\partial h$
    for any
    $h\in G_H$. The patrol of vertical sliding cameras is not a subset of $\partial v$
    for any $v\in G_V$.
    We assume that sliding cameras are maximal (in length).
\end{assumption}
\Cref{lemma:rvisionedge} implies that the
region covered by a horizontal or vertical sliding camera is determined by the unique
slice containing its patrol. For future reference, let us record the raster
graph adaptations of the $r$-guarding problem and sliding camera problems.
\begin{definition}\label{def:geometrytograph}
	Given a raster graph $G$, we call
	\begin{itemize}
		\item a set $F\subseteq E(G)$ a covering set of $r$-guards,
			if any edge $e\in E(G)$ is $r$-visible to some $f\in
			F$;
		\item a set $M_H\subseteq G_H$ a covering set of horizontal
			sliding cameras, if for any vertical slice $v\in G_V$
			there exists a horizontal slice $h\in M_H$ such that
			$hv\in E(G)$;
		\item a set $M_V\subseteq G_V$ a covering set of vertical
			sliding cameras, if for any horizontal slice $h\in G_H$
			there exists a vertical slice $v\in M_V$ such that
			$hv\in E(G)$;
        \item a set $S\subseteq G_H\cup G_V$ a covering set of sliding cameras,
            if for any edge $hv\in E(G)$ there exists a slice $s\in S$ such that
            $hs\in E(G)$ or $sv\in E(G)$. In other words, for any $s\in G_H\cup
            G_V$ we have $s\in \Gamma(S)$ or $\Gamma(s)\subseteq \Gamma(S)$.\footnote{A
            covering set of mobile guards was defined in~\cite{gyori_mobile_2019} as the
            notion corresponding to vertex dominating set in $G$.  That definition is unfortunately
    	    flawed because it does not correspond to the geometric definition.
            Note, that in grid intersection graphs, the domination number is
            indeed the appropriate notion for co-operative mobile
            guards~\cite{kosowski_cooperative_2007}.}
	\end{itemize}
\end{definition}

\begin{lemma}\label{lemma:equivalence}
    The notions defined by \Cref{def:geometrytograph} correspond to the
    geometric definitions under $r$-vision (with the restriction of
    \Cref{def:rguard_restriction,def:sliding_restriction}).
\end{lemma}
\begin{proof}
    The statement for $r$-guards follows from \Cref{lemma:rvisionedge}. The
    region visible to a maximal sliding camera is the union of the $r$-guards on
    its patrol. According to our previous statement, the region visible to a maximal horizontal sliding
    camera whose patrol is contained in $h\in G_H$ is equal to
    \begin{equation*}
        \bigcup_{v\in \Gamma(h)}\bigcup_{\substack{hv\boxdot h'v' \\ h'v'\in
        E(G)}} h'\cap v'=\bigcup_{v\in \Gamma(h)}\bigcup_{\substack{h'\in
                \Gamma(v)\\ v'\in\Gamma(h) \\ h'v'\in
        E(G)}} h'\cap v'=\bigcup_{\substack{h'\in
        \Gamma(\Gamma(h))\\ v'\in\Gamma(h) \\ h'\in \Gamma(v')}} h'\cap v'=
        \bigcup_{\substack{v'\in\Gamma(h) \\ h'\in \Gamma(v')}} h'\cap v'=
        \bigcup_{\substack{v'\in\Gamma(h)}} v'.
    \end{equation*}
    This proves the statement for horizontal sliding cameras, and by symmetry
    for vertical sliding cameras. A covering set of sliding cameras $S$ covers the
    pixel corresponding to $h'v'\in E(G)$ if and only if there exists $h\in S$
    such that $v'\in \Gamma(h)$ or there exists $v\in S$ such that $h'\in
    \Gamma(v)$.
\end{proof}

The concept of $R$-trees was introduced by~\citet{gyori_generalized_1996}.
\begin{definition}\label{def:Rtree}
The horizontal $R$-tree $T_H$ of $P$ is equal to
\begin{equation*}
    T_H:=\left(G_H,\Big\{\{h_1,h_2\}\subseteq G_H\ :\ h_1\neq h_2,\ h_1\cap
    h_2\neq\emptyset\Big\}\right),
\end{equation*}
i.e., $T_H$ is the intersection graph of the horizontal slices of $P$.
Similarly, the vertical $R$-tree of $P$ is the intersection graph of the vertical slices of $P$:
\begin{equation*}
    T_V:=\left(G_V,\Big\{\{v_1,v_2\}\subseteq G_V\ \mid\ v_1\neq v_2,\ v_1\cap
    v_2\neq\emptyset\Big\}\right).
\end{equation*}
\end{definition}

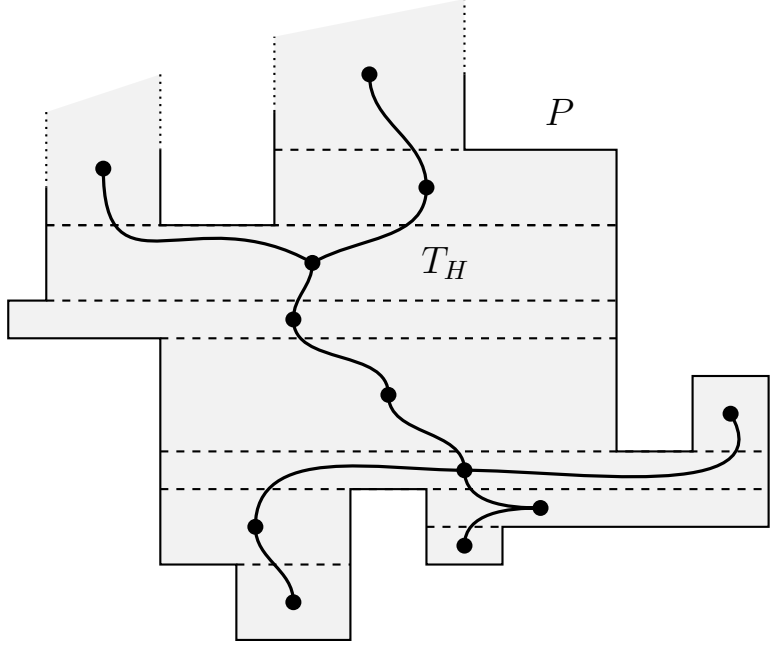
\begin{figure}[ht]
	\centering
	\begin{tikzpicture}
        \fill[gray,opacity=0.1] (-1.5,5)--(-1.5,2.5)--(-2,2.5)--(-2,2)--(0,2)--(0,-1)--(1,-1)--(1,-2)--(2.5,-2)--(2.5,0)--(3.5,0)--(3.5,-1)--(4.5,-1)
            --(4.5,-0.5)--(8,-0.5)--(8,1.5)--(7,1.5)--(7,0.5)--(6,0.5)--(6,4.5)--(4,4.5)--(4,6.5)
            -- (1.5,6)--(1.5,3.5)--(0,3.5)--(0,5.5);

        \draw[dotted] (-1.5,5)--(-1.5,4);
        \draw[dotted] (0,5.5)--(0,4.5);
        \draw[dotted] (1.5,5)--(1.5,6);
        \draw (1.5,5)--(1.5,3.5)--(0,3.5)--(0,4.5);
        \draw (-1.5,4)--(-1.5,2.5)--(-2,2.5)--(-2,2)--(0,2)--(0,-1)--(1,-1)--(1,-2)--(2.5,-2)--(2.5,0)--(3.5,0)--(3.5,-1)--(4.5,-1)
            --(4.5,-0.5)--(8,-0.5)--(8,1.5)--(7,1.5)--(7,0.5)--(6,0.5)--(6,4.5)--(4,4.5)--(4,5.5)
            edge[dotted] (4,6.5);

        \draw[dashed] (1.5,4.5)--(4,4.5);
        \draw[dashed] (-1.5,3.5)--(6,3.5);

        \draw[dashed] (-1.5,3.5)--(6,3.5);
        \draw[dashed] (-1.5,2.5)--(6,2.5);
        \draw[dashed] (0,2)--(6,2);
        \draw[dashed] (0,0.5)--(8,0.5);
        \draw[dashed] (0,0)--(8,0);
        \draw[dashed] (1,-1)--(2.5,-1);

        \draw[dashed] (3.5,-0.5)--(4.5,-0.5);

        \node[anchor=center] at (5.25,5) {\Large $P$};
        \node[anchor=center] at (3.75,3) {\Large $T_H$};

        \coordinate (a1) at (2.75,5.5);
        \coordinate (a2) at (-0.75,4.25);

        \coordinate (a3) at (3.5,4);
        \coordinate (a4) at (2,3);
        \coordinate (a5) at (1.75,2.25);
        \coordinate (a6) at (3,1.25);
        \coordinate (a7) at (4,0.25);
        \coordinate (a8) at (1.25,-0.5);
        \coordinate (a9) at (1.75,-1.5);
        \coordinate (a10) at (5,-0.25);
        \coordinate (a11) at (4,-0.75);
        \coordinate (a12) at (7.5,1);

        \foreach \x in {1,...,12}{\fill (a\x) circle[radius=3pt];}

            \draw[very thick,looseness=1.5] (a2) to[out=270,in=150] (a4);

            \draw[very thick,looseness=1] (a1) to[out=270,in=90] (a3)
                                            to[out=270,in=30] (a4)
                                            to[out=270,in=90] (a5)
                                            to[out=270,in=90] (a6)
                                            to[out=270,in=90] (a7)
                                            to[out=0,in=300,looseness=1] (a12);

            \draw[very thick,looseness=1] (a11) to[out=90,in=180] (a10)
                                                to[out=180,in=270] (a7)
                                                to[out=180,in=90] (a8)
                                                to[out=270,in=90] (a9);

	\end{tikzpicture}
    \caption{The simple orthogonal polygon $P$ is drawn with solid segments, and
        the region bounded by it is filled with gray. Each solid circle
        represents the horizontal slice enveloping it. The edges of $T_H$
    are drawn as curvy lines joining the solid circles representing its
    vertices. Edges of $T_H$ incident on slices not shown are not drawn either.}\label{fig:TH}
\end{figure}

\begin{figure}[ht]
	\centering
	\begin{tikzpicture}
        \fill[gray,opacity=0.1] (-1.5,5)--(-1.5,2.5)--(-2,2.5)--(-2,2)--(0,2)--(0,-1)--(1,-1)--(1,-2)--(2.5,-2)--(2.5,0)--(3.5,0)--(3.5,-1)--(4.5,-1)
            --(4.5,-0.5)--(8,-0.5)--(8,1.5)--(7,1.5)--(7,0.5)--(6,0.5)--(6,4.5)--(4,4.5)--(4,6.5)
            -- (1.5,6)--(1.5,3.5)--(0,3.5)--(0,5.5);

        \draw[dotted] (-1.5,5)--(-1.5,4);
        \draw[dotted] (0,5.5)--(0,4.5);
        \draw[dotted] (1.5,5)--(1.5,6);
        \draw (1.5,5)--(1.5,3.5)--(0,3.5)--(0,4.5);
        \draw (-1.5,4)--(-1.5,2.5)--(-2,2.5)--(-2,2)--(0,2)--(0,-1)--(1,-1)--(1,-2)--(2.5,-2)--(2.5,0)--(3.5,0)--(3.5,-1)--(4.5,-1)
            --(4.5,-0.5)--(8,-0.5)--(8,1.5)--(7,1.5)--(7,0.5)--(6,0.5)--(6,4.5)--(4,4.5)--(4,5.5)
            edge[dotted] (4,6.5);

        \draw[dashed] (-1.5,2.0)--(-1.5,2.5);
        \draw[dashed] (0,2)--(0,3.5);
        \draw[dashed] (1,-1)--(1,3.5);
        \draw[dashed] (1.5,-2)--(1.5,3.5);
        \draw[dashed] (2.5,0)--(2.5,6.2);
        \draw[dashed] (3.5,0)--(3.5,6.4);
        \draw[dashed] (4,-1)--(4,4.5);
        \draw[dashed] (4.5,-1)--(4.5,4.5);
        \draw[dashed] (6,-0.5)--(6,0.5);
        \draw[dashed] (7,-0.5)--(7,0.5);

        \node[anchor=center] at (5.25,5) {\Large $P$};
        \node[anchor=center] at (5.25,2) {\Large $T_V$};

        \coordinate (b1) at (-1.75,2.25);
        \coordinate (b2) at (-0.75,3.0);
        \coordinate (b3) at (0.5,1.25);
        \coordinate (b4) at (1.25,0.75);
        \coordinate (b5) at (2,1.75);
        \coordinate (b6) at (3,2.75);
        \coordinate (b7) at (3.75,2.25);
        \coordinate (b8) at (4.25,1.75);
        \coordinate (b9) at (5.25,1.25);
        \coordinate (b10) at (6.5,0);
        \coordinate (b11) at (7.5,1);

        \foreach \x in {1,...,11}{\fill (b\x) circle[radius=3pt];}

        \draw[very thick,looseness=1] (b1) to[out=0,in=180] (b2)
                                           to[out=0,in=90] (b3)
                                           to[out=270,in=180] (b4)
                                           to[out=0,in=270] (b5)
                                           to[out=90,in=180] (b6)
                                           to[out=0,in=90] (b7)
                                           to[out=270,in=180] (b8)
                                           to[out=0,in=90] (b9)
                                           to[out=270,in=180] (b10)
                                           to[out=0,in=270] (b11)
            ;
        
	\end{tikzpicture}
    \caption{The simple orthogonal polygon $P$ is drawn with solid segments, and
        the region bounded by it is filled with gray. Each solid circle
        represents the vertical slice enveloping it, and the edges of $T_V$
    are drawn as curvy lines joining the solid circles representing its
    vertices. Edges of $T_V$ incident on slices not shown are not drawn either.}\label{fig:TV}
\end{figure}

The graphs $T_H$ and $T_V$ are easily seen to be connected and cycle-free, because each
horizontal and vertical cut separates internally disjoint regions, as $P$ is
simple~\cite{gyori_short_1986,gyori_generalized_1996}. Equivalently, 
as shown on \Cref{fig:TH,fig:TV}, 
$T_H$ and $T_V$ are the duals of the planar graph determined by the segments of
$P$ and its horizontal and vertical cuts, respectively (without the outer face).

\begin{lemma}\label{lemma:pathG}
    For any $s\in G_H\cup G_V$, the neighborhood $\Gamma(s)$ is the vertex set of a path
    in the appropriate $R$-tree ($T_H$ or $T_V$).
\end{lemma}
\begin{proof}
    The vertical slices in $\Gamma(h)$ are internally disjoint
    and cover $h$. Because $h$ is a connected region, $\Gamma(h)$ induces a
    connected subtree in $T_V$. Furthermore, because $h$ is also vertically
    convex, $T_V[\Gamma(h)]$ cannot contain a vertex of degree 3 or greater.
    By symmetry, the statement holds for any $v\in G_V$, too.
\end{proof}

The statements of the following two lemmas are almost trivial, but stated
formally, they are not immediately obvious.
\begin{lemma}\label{lemma:boundary}
    Let $F\subseteq E(G)$ be a subset of edges in a raster graph $G$. Then
    \begin{equation*}
        \partial\left(\bigcup_{hv\in F}h\cap
        v\right)=\closure\left( \SymmDiff_{hv\in F}\partial\left(h\cap v\right)\right),
    \end{equation*}
    where $\Delta$ is the symmetric difference operator. The set of points
    exclusively contained by the closure 
    $\closure\left( \SymmDiff_{hv\in F}\partial\left(h\cap
    v\right)\right)\setminus\left( \SymmDiff_{hv\in F}\partial\left(h\cap v\right)\right)$ is a (finite)
    subset of the corners of pixels of $P$.
\end{lemma}
\begin{proof}
    Let $hv\in F$ and consider one of the four sides of the
    rectangle $h\cap v$: there exists at most one other $h'v'\in F$ such that $h'v'$ also
    contains this side. Thus the symmetric difference operator on the right hand
    side produces the
    interior of those sides that appear precisely in one $\partial(h\cap v)$ for $hv\in F$.
    There are four corners of $\partial(h\cap v)$ for each $hv\in F$. The symmetric
    difference operator produces such a corner $A$ if
    and only there is exactly one $hv\in F$ or there are exactly three $hv\in F$
    such that $A\in \partial(h\cap v)$. The set $\partial(\bigcup_{hv\in F}h\cap
        v)$ contains a finite
    number of corners of $\partial(h\cap v)$ for all $hv\in F$, therefore the
    closure re-introduces only a finite number of missing points.
\end{proof}

\begin{lemma}\label{lemma:convexHull}
    Let $v\in G_V$ and $h_1,h_2\in \Gamma(v)$. Let $[h_1,h_2]$ denote the
    vertex set of the path that connects $h_1$ to $h_2$ in $T_H$.
	Then
	\begin{equation*}
		\bigcup_{h\in [h_1,h_2]} h\cap v=\mathrm{convexHull}\Big( (h_1\cap v)\cup
		(h_2\cup v)\Big).
	\end{equation*}
\end{lemma}
\begin{proof}
    By \Cref{lemma:pathG}, $\Gamma(v)$ is the vertex set of a path in $T_H$,
    thus $[h_1,h_2]\subseteq \Gamma(v)$. The statement follows from the
    convexity of $v$.
\end{proof}

In this section, we defined the raster graph $G$ as the intersection graph of the vertex set of the
horizontal and vertical $R$-trees of a polygon.
Let us present an abstract generalization of raster graphs.
\begin{definition}\label{def:basegraph}
    We call $G$ a \emph{tree-based bipartite graph} with $T_H$ and $T_V$ as its horizontal and vertical
    $R$-trees if the following statements hold:
    \begin{enumerate}[label={(\roman*)}]
        \item $G=(G_H,G_V,E)$ is a connected bipartite graph;
        \item $T_H$ is a tree whose vertex set is $G_H$;
        \item $T_V$ is a tree whose vertex set is $G_V$;
        \item for any $h\in G_H$, the
            neighborhood $\Gamma(h)$ is the vertex set of a path in $T_V$;
        \item for any $v\in G_V$, the
            neighborhood $\Gamma(v)$ is the vertex set of a path in $T_H$.
    \end{enumerate}
    Without a subscript, $\Gamma(s)$ still denotes the set of neighbors of $s$ in $G$.
\end{definition}
Every raster graph is a tree-based bipartite graph. In the following
Sections~\ref{sec:partial}~to~\ref{sec:efficient} we will work with an arbitrary
tree-based bipartite graph $G$. We will return to working on raster graphs and
proving \Cref{thm:alg_typeA} in \Cref{sec:raster_quadratic}.

\section{The partial order on the vertices}\label{sec:partial}
Let $G$ be a tree-based bipartite graph with $R$-trees $T_H$ and $T_V$ (see
\Cref{def:basegraph}). In this section we pour the foundation of this paper,
that is, we develop the elementary lemmas we will use in later sections to
analyze the structure of $G$.

\begin{definition}[Roots of the $R$-trees]\label{def:roots}
    Let $h_\mathrm{root}v_\mathrm{root}$ be an edge of $G$ such that $v_\mathrm{root}$ is
    a leaf in $T_V$. The vertices $h_\mathrm{root}$ and $v_\mathrm{root}$ will
    serve as the roots $T_H$ and $T_V$, respectively.
\end{definition}
\begin{remark}\label{remark:roots} Although the choice of the roots potentially
    breaks the symmetry between $T_H$ and $T_V$ (there may not exist an edge
    whose endpoints are both leaves in the $R$-trees), all of our results and
    theorems, except for \Cref{lemma:leaf_comparable}, are agnostic to this
    symmetry breaking. The two applications of \Cref{lemma:leaf_comparable} are
    in the proofs \Cref{lemma:inheriting_neighbor,lemma:ell_gamma_convex}.
    However, the statements of these lemmas are completely symmetric with
    respect to $T_H$ and $T_V$. In other words, every lemma and theorem in this
    paper holds even if we swap the roles of $G_H$ and $G_V$.
\end{remark}

\begin{definition}\label{def:parent}
    For any $h\in G_H\setminus \{h_\mathrm{root}\}$, let $\mathrm{parent}(h)$ be
    the \emph{parent} of $h$ in $T_H$. Similarly, for any $v\in G_H$, let
    $\mathrm{parent}(v)$ be the \emph{parent} of $v$ in $T_V$ if $v\neq
    v_\mathrm{root}$. Define
    $\mathrm{parent}(h_\mathrm{root})=\emptyset$ and $\mathrm{parent}(v_\mathrm{root})=\emptyset$.
\end{definition}

\begin{definition}\label{def:partial}
    Given $s_1,s_2\in G_H$ or $s_1,s_2\in G_V$, let
    $[s_1,s_2]$ be the vertex set of the path joining them in $T_H$ or
    $T_V$, respectively. Define $\partial [s_1,s_2]:=\{s_1,s_2\}$,
    $\partial(\emptyset):=\emptyset$.
\end{definition}

\begin{definition}\label{def:order}
    Let us define a partial order on $G_H$ and $G_V$. For any $h_1,h_2\in G_H$
    and any $v_1,v_2\in G_V$, define:
    \begin{align*}
        h_1\le h_2 &\Longleftrightarrow h_1\in [h_\mathrm{root},h_2]\\
        v_1\le v_2 &\Longleftrightarrow v_1\in [v_\mathrm{root},v_2]
    \end{align*}
    The vertices $s_1,s_2\in G_H\cup G_V$ are \emph{comparable} if and only if
    $s_1\le s_2$ or $s_2\le s_1$. An element of $G_H$ is not comparable to any
    element of $G_V$, and vice versa.
\end{definition}
Using $<$ and $\le$ denotes both the partial order on the vertices of $G$ and the natural
order on the integers in this paper. However, the one which we are referring to
should always be clear from context; in addition, vertices are usually denoted by
(an indexed) $h,v,t,s$, while the variables $i,j,k,m,n$ stand for integers.

Later we will introduce an extension of this partial order
(\Cref{def:unifiedorder}). However, unless otherwise stated, when discussing the
comparability of slices or referring to an order on the slices, it is in the sense
of \Cref{def:order}.

\begin{definition}\label{def:minmax}
    Given a set of elements $S\subseteq G_H$ or $S\subseteq G_V$, if the unique minimum
    and unique maximum elements of the set exist (with respect to the order
    defined above), then we denote them by $\min S$
    and $\max S$, respectively. If there is no unique minimal (maximal)
    element, then we say that the minimum (maximum) does not exist. For
    convenience, we define $\min\emptyset=\max\emptyset=\emptyset$.
\end{definition}

\begin{lemma}\label{lemma:connected_convex}
    Suppose $S$ induces a connected subgraph in $T_H$ or $T_V$. If $a,b\in S$,
    then $[a,b]\subseteq S$.
\end{lemma}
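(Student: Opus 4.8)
The plan is to reduce the statement to the uniqueness of paths in a tree. I will carry out the argument in $T_H$; the case of $T_V$ is verbatim identical. First I would observe that since $S$ induces a connected subgraph of the tree $T_H$, that induced subgraph is itself acyclic --- being a subgraph of an acyclic graph --- and connected, hence a tree. Consequently there is a simple path $\pi$ from $a$ to $b$ whose vertex set is contained in $S$.

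Next, I would note that $\pi$ is also a simple path in the ambient tree $T_H$ joining $a$ and $b$, because every edge of the induced subgraph is an edge of $T_H$. Since $T_H$ is a tree, there is a \emph{unique} simple path between any two of its vertices, and by \Cref{def:partial} this path is exactly $[a,b]$. Therefore the vertex set of $\pi$ coincides with $[a,b]$.

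Finally, since the vertex set of $\pi$ is contained in $S$, we conclude $[a,b]\subseteq S$, as claimed.

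I do not expect a genuine obstacle here: the only ingredient is the standard fact that simple paths between fixed endpoints in a tree are unique, so no case analysis or geometric input is required. The single point worth stating carefully is that a connected subgraph of an acyclic graph is again acyclic, which is what guarantees that the connecting path inside $S$ is simple and hence must equal the unique tree-path $[a,b]$.
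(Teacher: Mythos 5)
Your proof is correct: the paper states this lemma without proof, treating it as a standard fact about trees, and your argument (a connected subgraph of a tree is a tree, so the path from $a$ to $b$ inside $S$ is a simple path in $T_H$, hence by uniqueness equals $[a,b]$) is exactly the standard justification. No gap here.
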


\begin{corollary}\label{lemma:gamma_convex}
    Let $s_0\in G_V\cup G_H$. If $s_1,s_2\in \Gamma(s_0)$, then $[s_1,s_2]\in \Gamma(s_0)$.
\end{corollary}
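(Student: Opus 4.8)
The plan is to derive this directly from \Cref{lemma:pathG} and \Cref{lemma:connected_convex}, treating it as essentially a one-line consequence of the fact that neighborhoods are convex (path-like) in the appropriate $R$-tree. First I would fix the type of $s_0$. Since $G$ is bipartite, all neighbors of $s_0$ lie in the opposite part, so $s_1$ and $s_2$ belong to a common tree and $[s_1,s_2]$ is well defined in the sense of \Cref{def:partial}. I would argue the case $s_0=h\in G_H$ in detail and then remark that the case $s_0=v\in G_V$ follows verbatim after exchanging the roles of $T_H$ and $T_V$ (invoking property (v) of \Cref{def:basegraph} in place of property (iv)).

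The key step is then immediate: by \Cref{lemma:pathG} (equivalently, property (iv) of \Cref{def:basegraph}), $\Gamma(h)$ is the vertex set of a path in $T_V$. A path is connected, so $\Gamma(h)$ induces a connected subgraph of $T_V$. Since $s_1,s_2\in\Gamma(h)$, applying \Cref{lemma:connected_convex} with $S=\Gamma(h)$ gives $[s_1,s_2]\subseteq\Gamma(h)=\Gamma(s_0)$, which is the assertion. (The statement should be read with $\subseteq$ rather than $\in$, as both $[s_1,s_2]$ and $\Gamma(s_0)$ are vertex sets.)

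I do not expect any genuine obstacle here; the entire content is packaged in the two cited results, and the only thing to check is the trivial observation that the vertex set of a path induces a connected subgraph, which is exactly what is needed to satisfy the hypothesis of \Cref{lemma:connected_convex}.
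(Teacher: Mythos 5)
Your proof is correct and is exactly the derivation the paper intends: the corollary follows immediately from \Cref{lemma:pathG} (equivalently, properties (iv)--(v) of \Cref{def:basegraph}) combined with \Cref{lemma:connected_convex}, which is why the paper states it without proof. Your remark that the conclusion should read $[s_1,s_2]\subseteq\Gamma(s_0)$ rather than $\in$ is also a fair correction of a typo in the statement.
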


\begin{corollary}\label{lemma:intersection}
    Let $s_1,s_2\in G_V\cup G_H$ such that
    $\Gamma(s_1)\cap\Gamma(s_2)\neq\emptyset$. Then
    $\Gamma(s_1)\cap\Gamma(s_2)$ induces a path in $T_H$ or $T_V$.
\end{corollary}

\begin{corollary}\label{lemma:connected_min}
    If $S$ induces a connected subgraph in $T_H$ or $T_V$, then $\min S$ exists.
\end{corollary}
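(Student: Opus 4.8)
The plan is to single out the vertex of $S$ lying closest to the root and show it is the least element. I work in $T_H$; the argument for $T_V$ is identical after exchanging the roles of $H$ and $V$. Recall that under \Cref{def:order} the relation $s\le s'$ holds precisely when $s$ lies on the path $[h_\mathrm{root},s']$, that is, when $s$ is an ancestor of $s'$ in the rooted tree $T_H$; in particular ancestors are never farther from $h_\mathrm{root}$ than their descendants. I would begin by letting $m$ be a vertex of $S$ whose distance from $h_\mathrm{root}$ in $T_H$ is minimal among the vertices of $S$.

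First I would establish that $m$ is unique. Suppose $m_1,m_2\in S$ both attain this minimal distance. By \Cref{lemma:connected_convex} we have $[m_1,m_2]\subseteq S$, and the vertex $\NCA(m_1,m_2)$ lies on this path, hence belongs to $S$. If $m_1\neq m_2$, then $\NCA(m_1,m_2)$ is a strict ancestor of at least one of them and is therefore strictly closer to $h_\mathrm{root}$, contradicting the minimality defining $m$. Thus $m_1=m_2$, so the minimum-distance vertex is unique. Next, to see that $m\le s$ for every $s\in S$, I fix $s\in S$ and again invoke \Cref{lemma:connected_convex} to obtain $[m,s]\subseteq S$. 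Setting $a=\NCA(m,s)$, the vertex $a$ lies on $[m,s]$, so $a\in S$; since $a$ is an ancestor of $m$, it is no farther from $h_\mathrm{root}$ than $m$. Minimality of $m$ then forces $a$ and $m$ to be at the same distance, and an ancestor of $m$ at the same depth must equal $m$, so $a=m$. As $a$ is also an ancestor of $s$, we conclude $m\le s$. Hence $m$ is a lower bound of $S$ that lies in $S$, so it is the least element; antisymmetry of $\le$ makes this least element unique, which is exactly the existence of $\min S$ required by \Cref{def:minmax}.

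The only delicate point is the comparability step: a priori the minimum-distance vertex could be an incomparable ``sibling'' of some other vertex of $S$, in which case no least element would exist. Connectivity of $S$, repackaged as the interval-closure property of \Cref{lemma:connected_convex}, is precisely what rules this out, since it forces the branch vertex $\NCA(m,s)$ into $S$, after which minimality of distance pins it to $m$. This is the main obstacle, and once it is handled the rest is immediate.
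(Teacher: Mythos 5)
Your proof is correct: identifying the depth-minimal vertex of the subtree induced by $S$ and using \Cref{lemma:connected_convex} to force $\NCA(m,s)$ into $S$ is exactly the argument the paper leaves implicit for this corollary. Nothing to add.
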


\begin{lemma}\label{lemma:le_comparable}
    Suppose $a,b,c\in G_H$ or $a,b,c\in G_V$.
    If $a,b\le c$, then $a$ and $b$ are comparable.
\end{lemma}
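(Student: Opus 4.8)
The plan is to unwind \Cref{def:order}: the relation $\le$ is exactly the ancestor relation in the rooted tree, so $x\le y$ means that $x$ lies on the unique path $[r,y]$ from the root $r$ to $y$, where $r=h_\mathrm{root}$ when the three slices lie in $G_H$ and $r=v_\mathrm{root}$ when they lie in $G_V$. Since the two cases are entirely symmetric, I would fix one tree $T\in\{T_H,T_V\}$ with its root $r$ and argue there. The hypotheses $a\le c$ and $b\le c$ then say precisely that $a,b\in[r,c]$, i.e. both $a$ and $b$ are vertices of the single path $[r,c]$; the whole content of the lemma is that any two vertices lying on a common root-path are comparable.

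First I would record that $[r,c]$ is the vertex set of a path in $T$, so I may list its vertices in order along the path as $r=p_0,p_1,\dots,p_k=c$, with consecutive $p_i$ adjacent in $T$. Writing $a=p_i$ and $b=p_j$, I may assume without loss of generality that $i\le j$. The key step is to observe that the initial segment $p_0,p_1,\dots,p_j$ is a path in $T$ from $r$ to $b=p_j$; since the path between two fixed vertices of a tree is unique, this segment is exactly $[r,b]$. (Equivalently, one can invoke \Cref{lemma:connected_convex}: the segment is connected and contains both $r$ and $b$, so it contains $[r,b]$, while it is itself contained in the path $[r,c]$, which pins it down.) Consequently $a=p_i\in\{p_0,\dots,p_j\}=[r,b]$, which by \Cref{def:order} is exactly $a\le b$, establishing comparability.

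I do not expect any serious obstacle here: the statement is the elementary fact that the ancestors of a fixed vertex in a rooted tree form a chain, and all the geometric content has already been abstracted away into \Cref{def:basegraph} and \Cref{def:order}. The only point requiring a word of care is the identification of the initial segment of $[r,c]$ with $[r,b]$, which rests on the uniqueness of paths in trees (or on \Cref{lemma:connected_convex}); once that is in hand, the comparability of $a$ and $b$ is immediate.
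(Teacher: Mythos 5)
Your argument is correct and is exactly the paper's approach, just spelled out in more detail: the paper's (one-line) proof likewise observes that $a,b\le c$ places both vertices on the unique path $[r,c]$ from the root to $c$, from which comparability is immediate since the initial segments of that path are precisely the root-paths $[r,b]$. Your explicit identification of the initial segment with $[r,b]$ via uniqueness of tree paths is the step the paper leaves implicit.
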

\begin{proof}
    Since $a,b\le c$, both vertices $a,b$ lie on the path from the root to $y$.
\end{proof}

\begin{lemma}\label{lemma:Tneighbors_gamma_intersect}
    If $s_1,s_2$ are neighbors in $T_H$ or $T_V$, then
    $\Gamma(s_1)\cap\Gamma(s_2)\neq\emptyset$.
\end{lemma}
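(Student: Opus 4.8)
The plan is to argue by contradiction using the connectivity of $G$. Suppose $s_1,s_2\in G_H$ are neighbors in $T_H$ but $\Gamma(s_1)\cap\Gamma(s_2)=\emptyset$ (the case $s_1,s_2\in G_V$ is symmetric, exchanging the roles of the two $R$-trees). Deleting the edge $\{s_1,s_2\}$ splits the tree $T_H$ into two subtrees; let $A$ be the vertex set of the component containing $s_1$ and $B$ the vertex set of the component containing $s_2$, so that $G_H=A\sqcup B$ with $s_1\in A$ and $s_2\in B$, and $\{s_1,s_2\}$ is the unique edge of $T_H$ joining $A$ to $B$.

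The key step is to show that the neighborhood of every vertical slice lies entirely inside one of these two parts. Fix $v\in G_V$. By \Cref{def:basegraph}, $\Gamma(v)$ is the vertex set of a path in $T_H$, hence it induces a connected subgraph; so if $\Gamma(v)$ met both $A$ and $B$, say $a\in\Gamma(v)\cap A$ and $b\in\Gamma(v)\cap B$, then \Cref{lemma:connected_convex} would give $[a,b]\subseteq\Gamma(v)$. Since the only $T_H$-edge between $A$ and $B$ is $\{s_1,s_2\}$, the path $[a,b]$ must traverse it, so $s_1,s_2\in[a,b]\subseteq\Gamma(v)$, i.e.\ $v\in\Gamma(s_1)\cap\Gamma(s_2)$, contradicting our assumption. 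Thus each $\Gamma(v)$ is contained in $A$ or in $B$, and since $G$ is connected we have $\Gamma(v)\neq\emptyset$, so this sorts $G_V$ into two disjoint classes $V_A:=\{v\in G_V:\Gamma(v)\subseteq A\}$ and $V_B:=\{v\in G_V:\Gamma(v)\subseteq B\}$.

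It then remains to read off the contradiction from the bipartite structure. Every edge of $G$ runs between $G_H$ and $G_V$. If some $h\in A$ were adjacent to some $v\in V_B$, then $h\in\Gamma(v)\subseteq B$, which is impossible; hence $A$ sends edges only into $V_A$, and symmetrically $B$ sends edges only into $V_B$, while by definition the vertices of $V_A$ (resp.\ $V_B$) reach only $A$ (resp.\ $B$). Consequently $A\cup V_A$ and $B\cup V_B$ are nonempty ($s_1\in A$, $s_2\in B$) and joined by no edge, contradicting property (i) of \Cref{def:basegraph}. This forces $\Gamma(s_1)\cap\Gamma(s_2)\neq\emptyset$.

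I expect the only genuine subtlety to be the middle step: recognizing that the path-neighborhood $\Gamma(v)$ cannot straddle the cut edge $\{s_1,s_2\}$ without containing both of its endpoints. Everything else is bookkeeping with the bipartite structure and the single deleted tree edge, and the geometric content --- that some vertical slice pokes through the horizontal cut separating $s_1$ and $s_2$ --- is exactly what the combinatorial axioms (iv)--(v) are designed to encode.
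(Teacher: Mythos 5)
Your proof is correct, and it reaches the conclusion by a slightly different route than the paper. The paper argues directly: by connectivity it takes a path $t_1t_2\cdots t_{2k+1}$ in $G$ from $s_1$ to $s_2$ and observes that the neighborhoods $\Gamma(t_2),\Gamma(t_4),\ldots,\Gamma(t_{2k})$ are paths in $T_H$, consecutive ones intersect, their union is connected and contains both $s_1$ and $s_2$, whence some single $\Gamma(t_{2i})$ must contain both. You instead argue by contraposition: delete the tree edge $\{s_1,s_2\}$ and show that if no $\Gamma(v)$ straddled the resulting cut, then $G$ itself would split into two nonempty vertex classes with no edge between them, contradicting property (i) of \Cref{def:basegraph}. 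The decisive observation is the same in both arguments --- a connected path-neighborhood meeting both sides of the cut must, by \Cref{lemma:connected_convex}, contain both endpoints of the deleted edge --- but your global formulation makes the final step fully explicit, whereas the paper's last inference (from ``the union of the $\Gamma(t_{2i})$ is connected and contains $s_1,s_2$'' to ``some single $\Gamma(t_{2i})$ contains both'') silently relies on exactly the straddling argument you spell out. So your version is a correct, and if anything more carefully justified, variant of the intended proof.
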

\begin{proof}
    Since $G$ is connected, there is a path in $G$ on vertices
    $t_1t_2\ldots t_{2k+1}$ such that $t_1=s_1$ and $t_2=s_2$. Notice that
    $\cup_{i=1}^k\Gamma(t_{2i})$ induces a connected subgraph of $T_H$ or $T_V$,
    therefore there exists some $i$ such that $s_1,s_2\in\Gamma(t_i)$.
\end{proof}

\begin{corollary}\label{lemma:gamma_preserves_connected}
    If $S$ induces a connected subgraph in $T_H$ or $T_V$, then so does
    $\Gamma(S)$.
\end{corollary}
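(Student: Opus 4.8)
The plan is to reduce the statement to the elementary fact that a union of connected vertex sets whose pairwise-overlap pattern is connected is itself connected, and to supply the required overlaps from \Cref{lemma:Tneighbors_gamma_intersect}. By symmetry it suffices to treat the case $S\subseteq G_H$, so that $S$ induces a subtree of $T_H$; I then want to show that $\Gamma(S)=\bigcup_{s\in S}\Gamma(s)\subseteq G_V$ induces a connected subgraph of $T_V$. The case $S\subseteq G_V$ is identical after exchanging the roles of $T_H$ and $T_V$ and invoking part (v) of \Cref{def:basegraph} in place of part (iv).

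First I would record the two building blocks and one auxiliary fact. Each individual neighborhood $\Gamma(s)$ for $s\in S$ is the vertex set of a path in $T_V$ by part (iv) of \Cref{def:basegraph}, hence connected. Whenever $s_1,s_2\in S$ are adjacent in $T_H$, \Cref{lemma:Tneighbors_gamma_intersect} guarantees $\Gamma(s_1)\cap\Gamma(s_2)\neq\emptyset$, i.e.\ adjacent slices have overlapping neighborhoods in $T_V$. Finally I would isolate the elementary graph-theoretic fact I keep using: if $A,B$ each induce a connected subgraph and $A\cap B\neq\emptyset$, then $A\cup B$ induces a connected subgraph, since for $x\in A$ and $y\in B$ one concatenates an $A$-path from $x$ to some common vertex $z\in A\cap B$ with a $B$-path from $z$ to $y$.

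Next I would run an induction on $|S|$. The base case $|S|=1$ is immediate, since $\Gamma(S)=\Gamma(s)$ is a path. For the inductive step I would use that the subgraph of $T_H$ induced by $S$ is a subtree (connectedness in a tree forces this, cf.\ the convexity recorded in \Cref{lemma:connected_convex}), and therefore has a leaf $s^*$ whose deletion leaves $S'=S\setminus\{s^*\}$ still inducing a connected subgraph. Let $s'$ be the unique neighbor of $s^*$ within $S$; then $s'\in S'$, so $\Gamma(s')\subseteq\Gamma(S')$, and \Cref{lemma:Tneighbors_gamma_intersect} gives $\Gamma(s^*)\cap\Gamma(s')\neq\emptyset$, whence $\Gamma(s^*)\cap\Gamma(S')\neq\emptyset$. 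By the induction hypothesis $\Gamma(S')$ is connected, and $\Gamma(s^*)$ is connected, so the elementary fact above yields that $\Gamma(S)=\Gamma(S')\cup\Gamma(s^*)$ is connected, completing the induction.

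I do not expect a genuine obstacle here: the content is carried entirely by \Cref{lemma:Tneighbors_gamma_intersect}, which supplies the overlaps, together with the path-connectedness of each $\Gamma(s)$ coming from \Cref{def:basegraph}. The only point demanding a little care is the inductive bookkeeping, namely ensuring that the vertex removed from $S$ is a leaf of the induced subtree so that $S'$ remains connected; this is precisely where the tree structure of $T_H$ (and the convexity of connected subsets recorded in \Cref{lemma:connected_convex}) is used.
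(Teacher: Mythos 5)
Your proof is correct and follows exactly the route the paper intends: the result is stated as an unproved corollary of \Cref{lemma:Tneighbors_gamma_intersect}, and the intended argument is precisely your combination of that lemma (overlaps between neighborhoods of $T_H$-adjacent slices) with the path-connectedness of each $\Gamma(s)$ from \Cref{def:basegraph}, glued together by induction over a leaf-removal of the induced subtree. No gaps; the leaf-removal bookkeeping you flag is handled correctly.
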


\begin{definition}
    For the sake of legibility, we will denote
    \begin{equation*}
    \Gamma\Gamma(s):=\Gamma(\Gamma(s)).
    \end{equation*}
\end{definition}

\begin{corollary}\label{cor:minGamma_exists}
    The unique minimums $\min \Gamma(s)$, $\min \Gamma\Gamma(s)$, $\min
    \Gamma\Gamma\Gamma(s)$ exist for any $s\in G_H\cup G_V$.
\end{corollary}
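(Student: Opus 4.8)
The plan is to reduce all three claims to \Cref{lemma:connected_min}, which guarantees that $\min S$ exists whenever $S$ induces a connected subgraph of $T_H$ or $T_V$. It therefore suffices to verify that each of $\Gamma(s)$, $\Gamma\Gamma(s)$, and $\Gamma\Gamma\Gamma(s)$ induces a connected subgraph in the appropriate tree, and then invoke that corollary once for each.

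First I would dispatch the base case $\Gamma(s)$. By the defining properties (iv) and (v) of a tree-path intersection graph in \Cref{def:basegraph}, $\Gamma(s)$ is the vertex set of a path --- in $T_V$ when $s\in G_H$, and in $T_H$ when $s\in G_V$. A path is connected, so $\Gamma(s)$ induces a connected subgraph in the relevant tree, and hence $\min\Gamma(s)$ exists by \Cref{lemma:connected_min}.

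The remaining two cases follow by iterating \Cref{lemma:gamma_preserves_connected}, which states that $\Gamma$ carries connected sets to connected sets. Since $\Gamma(s)$ is connected, $\Gamma\Gamma(s)=\Gamma(\Gamma(s))$ is connected as well, so $\min\Gamma\Gamma(s)$ exists; applying the same corollary once more to the connected set $\Gamma\Gamma(s)$ shows that $\Gamma\Gamma\Gamma(s)$ is connected, whence $\min\Gamma\Gamma\Gamma(s)$ exists. I would note that these neighbourhoods alternate between subsets of $G_V$ and $G_H$ as the number of applications of $\Gamma$ changes parity, but this is harmless, since all three cited results apply verbatim to both $T_H$ and $T_V$.

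There is essentially no genuine obstacle here: the statement is an immediate consequence of the machinery already assembled in this section. The only point worth flagging is the base case --- that a single neighbourhood $\Gamma(s)$ is \emph{already} connected because it is a path by hypothesis --- after which \Cref{lemma:gamma_preserves_connected} does all the work, and \Cref{lemma:connected_min} converts connectivity into the existence of the unique minimum at each stage.
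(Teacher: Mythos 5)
Your proof is correct and matches the paper's intended argument exactly: the paper states this as an unproved corollary following \Cref{lemma:connected_min} and \Cref{lemma:gamma_preserves_connected}, with the base case supplied by properties (iv) and (v) of \Cref{def:basegraph}. Nothing to add.
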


\begin{corollary}\label{cor:minGamma_le_neighbor}
    For any $s\in G_H\cup G_V$ and $t\in \Gamma(s)$, we have $\min\Gamma(s)\le
    t$.
\end{corollary}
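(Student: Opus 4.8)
The plan is to prove \Cref{cor:minGamma_le_neighbor} by reducing it to the previously established fact that neighborhoods in $G$ are convex in the $R$-trees (\Cref{lemma:gamma_convex}) together with the existence of $\min\Gamma(s)$ (\Cref{cor:minGamma_exists}). Let me think about what the statement actually asserts: for $s\in G_H\cup G_V$ and $t\in\Gamma(s)$, we want $\min\Gamma(s)\le t$. The point is that $\Gamma(s)$ lives in one of the $R$-trees ($T_V$ if $s\in G_H$, or $T_H$ if $s\in G_V$), and by \Cref{lemma:pathG} it is in fact the vertex set of a \emph{path}. So the whole content is that the $\le$-minimum of a path is $\le$ every vertex on that path.

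First I would fix notation: assume $s\in G_H$, so that $\Gamma(s)\subseteq G_V$ and we work inside $T_V$ rooted at $v_\mathrm{root}$; the case $s\in G_V$ is symmetric by swapping the roles of the two $R$-trees. By \Cref{cor:minGamma_exists} the element $m:=\min\Gamma(s)$ exists, meaning it is the unique $\le$-minimal element of $\Gamma(s)$. I want to show $m\le t$, i.e.\ $m\in[v_\mathrm{root},t]$ by \Cref{def:order}. The natural route is to argue that $m$ must lie on the root-to-$t$ path because $\Gamma(s)$ is a connected (indeed path-shaped) subset of $T_V$ whose minimum is the node closest to the root.

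The key step is the following: since $m$ and $t$ both lie in $\Gamma(s)$, \Cref{lemma:gamma_convex} gives $[m,t]\subseteq\Gamma(s)$, so the entire $T_V$-path between $m$ and $t$ stays inside $\Gamma(s)$. Now consider $m':=\min[m,t]$, which exists because $[m,t]$ induces a connected subgraph of $T_V$ (\Cref{lemma:connected_min}). Since $[m,t]\subseteq\Gamma(s)$ and $m=\min\Gamma(s)$, we have $m\le m'$; but also $m\in[m,t]$ so $m'\le m$, forcing $m'=m$. Thus $m=\min[m,t]$, which means $m$ lies on the root-to-$t$ geodesic: indeed $m'=\min[m,t]$ is precisely the vertex of the path $[m,t]$ that is closest to the root, and being the minimum of the path joining $m$ and $t$ it satisfies $m'\le t$, i.e.\ $m\le t$. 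This is exactly the claim.

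The main obstacle is making the step ``$\min[m,t]\le t$'' rigorous from the definitions, rather than treating it as visually obvious. The cleanest way is to note that on a tree rooted at $v_\mathrm{root}$, for any two comparable vertices the smaller one lies between the root and the larger; and the pair $m,t$ becomes comparable precisely because $m=\min[m,t]$ is comparable to every vertex of the connected set $[m,t]$ by \Cref{lemma:le_comparable} applied along the path, and $m\le$ everything on $[m,t]$. Once $m$ and $t$ are known to be comparable with $m$ the minimum of the path containing both, $m\le t$ is immediate from \Cref{def:order}. I would keep the symmetric $s\in G_V$ case to a single sentence, since it is obtained verbatim by interchanging $T_H$ with $T_V$ and $h_\mathrm{root}$ with $v_\mathrm{root}$.
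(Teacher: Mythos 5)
The paper gives no proof of \Cref{cor:minGamma_le_neighbor} at all, because under \Cref{def:minmax} the symbol $\min\Gamma(s)$ denotes, when it exists, the unique \emph{minimum} of $\Gamma(s)$, i.e.\ an element that is $\le$ every element of $\Gamma(s)$; once \Cref{cor:minGamma_exists} guarantees existence, the inequality $\min\Gamma(s)\le t$ is just this definition unpacked for the particular element $t$. Your argument reaches the correct conclusion, but it does not constitute an independent proof: every substantive step is itself an instance of the property at issue. Concretely, ``$m\le m'$ since $[m,t]\subseteq\Gamma(s)$ and $m=\min\Gamma(s)$'' is exactly the corollary applied to the element $m'\in\Gamma(s)$, and ``$m'\le m$'' and ``$m'\le t$'' are the same property invoked for the sub-path $[m,t]$. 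If you grant that property, the one-line proof ``$m=\min\Gamma(s)$ exists and $t\in\Gamma(s)$, hence $m\le t$'' already finishes, and the detour through \Cref{lemma:gamma_convex} and $\min[m,t]$ adds nothing; if you do not grant it (reading $\min$ as only the unique \emph{minimal} element), the argument is circular.

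The attempted repair in your final paragraph also does not go through: \Cref{lemma:le_comparable} ``applied along the path'' cannot establish that $m$ and $t$ are comparable, because comparability does not propagate along a tree path --- a path may ascend towards the root and then descend into a different subtree, leaving its endpoints incomparable --- and the supporting clause ``$m\le$ everything on $[m,t]$'' is again the statement to be proved. The fact that would genuinely need an argument under the weaker reading, namely that the unique minimal element of a connected subset of a rooted tree is an ancestor of (hence $\le$) every element of that subset, is the implicit content of \Cref{lemma:connected_min}: the vertex of a connected set nearest to the root is the nearest common ancestor of the whole set and therefore lies on the root-to-$t$ path for every $t$ in the set. Citing that observation, or simply the definition of minimum together with \Cref{cor:minGamma_exists}, is all that is required here.
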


\begin{lemma}\label{lemma:leaf_comparable}
    If $s_1,s_2\in \Gamma(s_0)$ and $\min\Gamma(s_0)=v_\mathrm{root}$, then
    $s_1$ and $s_2$ are comparable.
\end{lemma}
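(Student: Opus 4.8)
The plan is to combine two facts: that $\Gamma(s_0)$ is the vertex set of a path in an $R$-tree (\Cref{lemma:pathG}), and that $v_\mathrm{root}$ is a \emph{leaf} of $T_V$ (\Cref{def:roots}). Together these will force $\Gamma(s_0)$ to be a chain descending from the root, after which comparability is immediate from \Cref{lemma:le_comparable}.

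First I would fix the type of $s_0$. Since $\min\Gamma(s_0)$ is an element of $\Gamma(s_0)$ and equals $v_\mathrm{root}\in G_V$, the neighborhood $\Gamma(s_0)$ meets $G_V$; as $G$ is bipartite, this forces $\Gamma(s_0)\subseteq G_V$ and hence $s_0\in G_H$. By \Cref{lemma:pathG}, $\Gamma(s_0)$ is then the vertex set of some path $[a,b]$ in $T_V$, and by hypothesis $v_\mathrm{root}\in[a,b]$.

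The crux is to show that $v_\mathrm{root}$ is an \emph{endpoint} of this path. Every interior vertex of a tree-path has two distinct path-neighbors and therefore has degree at least $2$ in $T_V$; but $v_\mathrm{root}$ is a leaf and has degree $1$. Hence $v_\mathrm{root}$ cannot be interior to $[a,b]$, so $v_\mathrm{root}\in\{a,b\}$; say $v_\mathrm{root}=a$, so that $\Gamma(s_0)=[v_\mathrm{root},b]$. Now the definition of the partial order (\Cref{def:order}) says precisely that $v\le b$ whenever $v\in[v_\mathrm{root},b]$, so in particular $s_1\le b$ and $s_2\le b$. \Cref{lemma:le_comparable} then yields that $s_1$ and $s_2$ are comparable.

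The step I expect to be the real content --- as opposed to bookkeeping --- is the endpoint argument of the previous paragraph: it is exactly the leaf property of $v_\mathrm{root}$ that rules out the bad configuration in which $a$ and $b$ hang off the root in two different subtrees, where $s_1$ and $s_2$ could well be incomparable. Everything else is a direct application of the already-established lemmas, so I would keep that part brief.
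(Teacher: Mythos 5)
Your proof is correct and takes essentially the same route as the paper's: the leaf property of $v_\mathrm{root}$ forces it to be an endpoint of the path induced by $\Gamma(s_0)$, so $\Gamma(s_0)=[v_\mathrm{root},v]$ and comparability of $s_1,s_2\le v$ follows via \Cref{lemma:le_comparable}. The paper compresses the endpoint step into the single observation $\min\Gamma(s_0)\in\partial\Gamma(s_0)$, which is exactly the degree argument you spell out.
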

\begin{proof}
    Since $v_\mathrm{root}$ is a leaf, we must have $\min\Gamma(s_0)\in
    \partial\Gamma(s_0)$. Thus $\Gamma(s_0)=[v_\mathrm{root},v]$, where
    $v\in\partial\Gamma(s_0)$. It follows that both $s_1,s_2\le v$, so they must be
    comparable.
\end{proof}

\begin{lemma}\label{lemma:minsarecomparable}
    If $\Gamma(s_1)\cap
    \Gamma(s_2)\neq \emptyset$, then $\min\Gamma(s_1)$ and $\min\Gamma(s_2)$ are
    comparable.
\end{lemma}
\begin{proof}
    Follows from \Cref{lemma:le_comparable}.
\end{proof}

\begin{lemma}\label{lemma:minless_comparable}
    If $\min\Gamma(s_1)< \min\Gamma(s_2)$, then any two elements in
    $\Gamma(s_1)\cap \Gamma(s_2)$ are comparable.
\end{lemma}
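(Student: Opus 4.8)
The plan is to reduce the claim to the order-structure of a single path in a tree, and then to exploit the strict hypothesis $\min\Gamma(s_1)<\min\Gamma(s_2)$ to confine the intersection to one descending branch, where comparability is automatic.

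First I would record the setup. The relation $\min\Gamma(s_1)<\min\Gamma(s_2)$ is only meaningful when both minima lie in the same part (an element of $G_H$ is incomparable to any element of $G_V$), so $\Gamma(s_1)$ and $\Gamma(s_2)$ are paths in the same tree, say $T_V$; their minima exist by \Cref{cor:minGamma_exists}. Write $m_i=\min\Gamma(s_i)$, so $m_1<m_2$. Each $\Gamma(s_i)$ is the vertex set of a path (\Cref{def:basegraph}), hence a connected, convex subtree (\Cref{lemma:connected_convex}), and its root-closest vertex is exactly $m_i$; by \Cref{cor:minGamma_le_neighbor} every $t\in\Gamma(s_i)$ satisfies $m_i\le t$. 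The observation to keep in mind is that a path in a rooted tree is a \emph{chain} (pairwise comparable) precisely when its top vertex is one of its two endpoints, i.e.\ when it never branches below its top.

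Then I would argue by contradiction. Suppose $w,w'\in\Gamma(s_1)\cap\Gamma(s_2)$ are incomparable and set $c=\NCA(w,w')$, so that $c<w$, $c<w'$, and $w,w'$ sit in two distinct child-subtrees of $c$. Since $m_2\le w$ and $m_2\le w'$, the vertex $m_2$ is a common ancestor of $w$ and $w'$, hence $m_2\le c$, and therefore $c>m_1$. Because $c,w,w'\in\Gamma(s_1)$ and $\Gamma(s_1)$ is convex (\Cref{lemma:connected_convex}), the paths $[c,w]$ and $[c,w']$ lie inside $\Gamma(s_1)$; in particular their first steps out of $c$ give two \emph{distinct} children of $c$ both belonging to the path $\Gamma(s_1)$.

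The contradiction, which is the crux of the argument, is that $c>m_1=\min\Gamma(s_1)$ forces $c$ to be a non-top vertex of the path $\Gamma(s_1)$. A non-top vertex of a path has at most one path-neighbour that is one of its children (its other path-neighbour, if any, is the parent leading back up toward the top), so it cannot have two distinct children on the path. This is exactly where the strictness $m_1<m_2$ is essential: it guarantees $c$ lies strictly below the top of $\Gamma(s_1)$, pinning the intersection onto a single descending branch. The only obstacle I anticipate is book-keeping: one must make precise that the minimum of a path-shaped neighbourhood is its nearest-common-ancestor top and that $\le$ is exactly the ancestor relation, so that ``$m_2$ is a common ancestor of $w$ and $w'$'' legitimately yields $m_2\le c$. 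With these tree identities in hand the contradiction is immediate, and the lemma follows.
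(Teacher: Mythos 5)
Your proof is correct and rests on the same structural insight as the paper's: the strict inequality $\min\Gamma(s_1)<\min\Gamma(s_2)$ forces everything at or below $\min\Gamma(s_2)$ within the path $\Gamma(s_1)$ onto a single descending branch (the paper phrases this as there being a unique endpoint $s\in\partial\Gamma(s_1)$ with $\min\Gamma(s_2)\le s$, then invokes \Cref{lemma:le_comparable}; you phrase it contrapositively via the nearest common ancestor of two putatively incomparable elements). The two arguments are essentially the same, yours being a contradiction-style rewording of the paper's direct one.
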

\begin{proof}
    Let $s,s'\in \Gamma(s_1)\cap \Gamma(s_2)$ be arbitrary. By definition
    \begin{equation*}
        \min\Gamma(s_1)< \min\Gamma(s_2)\le s,s'.
    \end{equation*}
    Notice, however, that there is a unique $s\in\partial\Gamma(s_1)$ such that
    $\min\Gamma(s_2)\le s$, therefore any two elements in $\Gamma(s_1)\cap \Gamma(s_2)$
    are less than or equal to $s$.
\end{proof}

\begin{corollary}\label{lemma:incomparable_common_neighbors}
    If $s_1$ and $s_2$ are not comparable, then any $t_1,t_2\in \Gamma(s_1)\cap
	\Gamma(s_2)$ satisfies $\min\Gamma(t_1)=\min\Gamma(t_2)$.
\end{corollary}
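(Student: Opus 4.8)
The plan is to prove \Cref{lemma:incomparable_common_neighbors} by contraposition, leaning almost entirely on \Cref{lemma:minless_comparable}. First I would dispose of the trivial configuration. By \Cref{def:order}, the hypothesis that $s_1$ and $s_2$ are not comparable is automatically satisfied whenever they lie in different parts; but if, say, $s_1\in G_H$ and $s_2\in G_V$, then $\Gamma(s_1)\subseteq G_V$ while $\Gamma(s_2)\subseteq G_H$, so $\Gamma(s_1)\cap\Gamma(s_2)=\emptyset$ and the conclusion holds vacuously. Hence I may assume $s_1,s_2$ lie in the same part, say $s_1,s_2\in G_H$, and fix arbitrary $t_1,t_2\in\Gamma(s_1)\cap\Gamma(s_2)\subseteq G_V$. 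Note that all the minimums $\min\Gamma(t_i)$ are then taken in the order on $G_H$, so everything lives in the correct tree.

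The key move is to exchange the roles of the two pairs. Since $t_1,t_2\in\Gamma(s_1)\cap\Gamma(s_2)$, symmetrically we have $s_1,s_2\in\Gamma(t_1)\cap\Gamma(t_2)$; in particular this common neighborhood is nonempty, so \Cref{lemma:minsarecomparable} guarantees that $\min\Gamma(t_1)$ and $\min\Gamma(t_2)$ are comparable. I would then suppose toward a contradiction that they are unequal, so that one is strictly smaller — without loss of generality $\min\Gamma(t_1)<\min\Gamma(t_2)$. Applying \Cref{lemma:minless_comparable} to the pair $(t_1,t_2)$ yields that any two elements of $\Gamma(t_1)\cap\Gamma(t_2)$ are comparable. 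But $s_1,s_2$ are two such elements, so $s_1$ and $s_2$ would be comparable, contradicting the hypothesis. Therefore $\min\Gamma(t_1)=\min\Gamma(t_2)$, as claimed.

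I expect no genuine obstacle here: the entire structural content has already been extracted into \Cref{lemma:minless_comparable}, and what remains is a short contrapositive argument. The only points requiring care are bookkeeping ones — verifying that the cross-part case is vacuous, tracking that $t_1,t_2\in G_V$ so that the relevant minimums are compared within $G_H$, and correctly reversing the roles of the $s$'s and $t$'s when invoking the two earlier lemmas. Once those are in place, the contradiction falls out immediately.
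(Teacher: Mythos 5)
Your proof is correct and matches the paper's intended derivation: the statement appears as an unproved corollary of \Cref{lemma:minless_comparable}, and the contrapositive argument you give — swapping the roles of the $s$'s and $t$'s, invoking \Cref{lemma:minsarecomparable} for comparability of the minimums, then \Cref{lemma:minless_comparable} to contradict the incomparability of $s_1,s_2$ — is exactly how it follows.
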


\begin{lemma}\label{lemma:min_in_other_gamma}
    If $\Gamma(s_1)\cap \Gamma(s_2)\neq \emptyset$ and
    $\min\Gamma(s_1)\le\min\Gamma(s_2)$, then $\min\Gamma(s_2)\in
    \Gamma(s_1)$.
\end{lemma}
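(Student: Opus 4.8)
The plan is to reduce everything to a single tree and then invoke convexity of neighborhoods. First I would observe that the hypothesis $\Gamma(s_1)\cap\Gamma(s_2)\neq\emptyset$ forces $s_1$ and $s_2$ to lie in the same part of the bipartition, since $G_H\cap G_V=\emptyset$; so without loss of generality $s_1,s_2\in G_H$, and both $\Gamma(s_1)$ and $\Gamma(s_2)$ are vertex sets of paths in $T_V$ by property (iv) of \Cref{def:basegraph}. Write $m_1:=\min\Gamma(s_1)$ and $m_2:=\min\Gamma(s_2)$; these exist by \Cref{cor:minGamma_exists}, and the claim to prove is simply $m_2\in\Gamma(s_1)$.

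Next I would fix any common neighbor $t\in\Gamma(s_1)\cap\Gamma(s_2)$. Since $t\in\Gamma(s_2)$, \Cref{cor:minGamma_le_neighbor} gives $m_2\le t$, and together with the hypothesis $m_1\le m_2$ this yields the chain $m_1\le m_2\le t$; by transitivity of the order, $m_1\le t$ as well. The heart of the argument is identifying the tree-path $[m_1,t]$ with the order interval it spans: because $m_1\le t$, the vertex $m_1$ is an ancestor of $t$ in $T_V$, so the path $[m_1,t]$ is exactly the descending chain $\{x:m_1\le x\le t\}$ (immediate from \Cref{def:order}, since every vertex of the root-to-$t$ path lying weakly below $m_1$ is one such $x$). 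Consequently the chain $m_1\le m_2\le t$ places $m_2\in[m_1,t]$.

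Finally I would appeal to convexity of the neighborhood. Both $m_1$ and $t$ belong to $\Gamma(s_1)$, which induces a connected subgraph of $T_V$ (being the vertex set of a path), so \Cref{lemma:connected_convex} gives $[m_1,t]\subseteq\Gamma(s_1)$; hence $m_2\in\Gamma(s_1)$, as required. I expect the only step requiring genuine care to be the middle one, namely verifying that the tree-path between the two comparable vertices $m_1$ and $t$ coincides with the order interval they span, so that the chain $m_1\le m_2\le t$ really certifies $m_2\in[m_1,t]$. Everything else is a direct application of the already-established minimum and convexity lemmas.
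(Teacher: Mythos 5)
Your argument is correct and is essentially the paper's own proof: both fix a common neighbor $s_0=t$, derive the chain $\min\Gamma(s_1)\le\min\Gamma(s_2)\le t$, and conclude $\min\Gamma(s_2)\in[\min\Gamma(s_1),t]\subseteq\Gamma(s_1)$ by convexity of the neighborhood path. The only difference is that you spell out explicitly why the chain of inequalities certifies membership in the tree-path $[\min\Gamma(s_1),t]$, a step the paper treats as immediate.
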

\begin{proof}
    Let $s_0\in
    \Gamma(s_1)\cap\Gamma(s_2)$, then
    \begin{equation*}
        \min\Gamma(s_1)\le \min\Gamma(s_2)\le s_0,
    \end{equation*}
    and therefore
    \begin{equation*}
        \min\Gamma(s_2)\in [\min\Gamma(s_1),s_0]\subseteq \Gamma(s_1),
    \end{equation*}
    which proves the claim.
\end{proof}

The following lemma is essential to all of our later proofs. It reinforces and
ties the foundations together.
\begin{lemma}\label{lemma:minle}
    If $s_1\le s_2$, then $\min\Gamma(s_1)\le\min\Gamma(s_2)$.
\end{lemma}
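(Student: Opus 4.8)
The plan is to prove the statement by first reducing it to an elementary parent--child configuration and then analysing that configuration with the machinery assembled above. By the symmetry between the two $R$-trees it suffices to treat the case $s_1,s_2\in G_H$. Writing the path realising $s_1\le s_2$ in $T_H$ as $s_1=w_0,w_1,\dots,w_k=s_2$ with $w_i=\mathrm{parent}(w_{i+1})$, transitivity of $\le$ on $G_V$ reduces the claim to the single step $\min\Gamma(w_i)\le\min\Gamma(w_{i+1})$. Thus it is enough to prove: if $s_1=\mathrm{parent}(s_2)$ in $T_H$, then $\min\Gamma(s_1)\le\min\Gamma(s_2)$.

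For this parent--child case, set $m_1=\min\Gamma(s_1)$ and $m_2=\min\Gamma(s_2)$. Since $s_1,s_2$ are adjacent in $T_H$, \Cref{lemma:Tneighbors_gamma_intersect} gives $\Gamma(s_1)\cap\Gamma(s_2)\neq\emptyset$, and then \Cref{lemma:minsarecomparable} shows $m_1$ and $m_2$ are comparable. If $m_1\le m_2$ we are done, so assume for contradiction that $m_2<m_1$. Applying \Cref{lemma:min_in_other_gamma} (with the roles of $s_1,s_2$ exchanged) yields $m_1\in\Gamma(s_2)$; together with $m_1\in\Gamma(s_1)$ this places both $s_1$ and $s_2$ in $\Gamma(m_1)$, so $\min\Gamma(m_1)\le s_1$ by \Cref{cor:minGamma_le_neighbor}. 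On the other hand $\Gamma(m_2)$ is the vertex set of a path in $T_H$ (\Cref{def:basegraph}(v)) that contains $s_2$ but not $\mathrm{parent}(s_2)=s_1$ (otherwise $m_2\in\Gamma(s_1)$, contradicting $m_2<m_1=\min\Gamma(s_1)$); by the convexity of connected sets in a tree (\Cref{lemma:connected_convex}) every element of $\Gamma(m_2)$ then lies in the subtree rooted at $s_2$, whence $\min\Gamma(m_2)=s_2$. Combining, $\min\Gamma(m_1)\le s_1<s_2=\min\Gamma(m_2)$; that is, the comparable pair $m_2<m_1$ in $T_V$ is again a \emph{reversal} of $\min\Gamma$.

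The previous paragraph turns an order reversal of $\min\Gamma$ in $T_H$ into one in $T_V$; in fact it shows $\min\Gamma(\min\Gamma(s_2))=s_2$, so the larger vertex sits in a two-element $\min\Gamma$-cycle $s_2\leftrightarrow m_2$. This self-similarity is exactly what makes the argument delicate: the derived reversal $(m_2,m_1)$ reproduces the same local picture, so a naive descent on tree depth stalls --- and indeed the depth of $m_1$ in $T_V$ need not be smaller than the depth of $s_2$ in $T_H$, the two trees being a priori unrelated. The main obstacle is therefore to make this descent well founded. The way I would close it is to run the argument as a minimal-counterexample / infinite-descent scheme anchored at the roots: among all reversals (in either tree) choose one extremal for a measure that is meaningful across both trees, propagate the two-cycle $s_2\leftrightarrow m_2$ upward along $\mathrm{parent}$ in $T_H$ and $T_V$ simultaneously, and force a contradiction at the top, where \Cref{def:roots} gives $\min\Gamma(h_\mathrm{root})=v_\mathrm{root}$ and $\min\Gamma(v_\mathrm{root})=h_\mathrm{root}$ (using that $v_\mathrm{root}$ is a leaf) --- a genuine, \emph{order-preserving} two-cycle that cannot coexist with a strictly interior reversal. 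Pinning down the precise monotone quantity that strictly decreases under the $T_H\leftrightarrow T_V$ passage of the preceding paragraph is the crux of the whole proof; everything else is an assembly of \Cref{lemma:Tneighbors_gamma_intersect}, \Cref{lemma:minsarecomparable}, \Cref{lemma:min_in_other_gamma}, and tree convexity.
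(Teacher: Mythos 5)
Your reduction to the parent--child case is the right first move, and your local analysis of a putative reversal is correct: if $s_1=\mathrm{parent}(s_2)$ and $m_2=\min\Gamma(s_2)<\min\Gamma(s_1)=m_1$, then indeed $m_1\in\Gamma(s_2)$ by \Cref{lemma:min_in_other_gamma}, hence $\min\Gamma(m_1)\le s_1$, and the convexity argument showing $\Gamma(m_2)$ lies in the subtree below $s_2$ (so $\min\Gamma(m_2)=s_2$) is sound. But the proof is not complete, and you say so yourself: the ``precise monotone quantity that strictly decreases under the $T_H\leftrightarrow T_V$ passage'' is exactly the missing ingredient. As you correctly observe, the reversal you manufacture in the other tree sits at a depth that is a priori unrelated to the one you started from, so iterating the passage gives no well-founded descent, and announcing that one will ``anchor the scheme at the roots'' does not supply the induction. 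What you have is a true structural observation plus a plan; the crux of the lemma is precisely the termination argument you leave open.

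The paper closes the argument with a descent that never leaves the tree containing the original pair, which is why it terminates trivially. Take $v_2$ minimal among counterexamples (so $v_1<v_2$ are adjacent and $\min\Gamma(v_2)<\min\Gamma(v_1)$), and walk up the ancestor chain of $v_1$: with $v_0=\mathrm{parent}(v_1)$, minimality gives $\min\Gamma(v_1)\in\Gamma(v_0)$, and if $\min\Gamma(v_0)\le\min\Gamma(v_2)$ then $\min\Gamma(v_2)\in[\min\Gamma(v_0),\min\Gamma(v_1)]\subseteq\Gamma(v_0)$, whence $v_1\in[v_0,v_2]\subseteq\Gamma(\min\Gamma(v_2))$, contradicting $\min\Gamma(v_2)\notin\Gamma(v_1)$; therefore $\min\Gamma(v_2)<\min\Gamma(v_0)$. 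Repeating up to the root forces $\min\Gamma(v_2)<\min\Gamma(v_\mathrm{root})=h_\mathrm{root}$, which is impossible since nothing lies strictly below the root of $T_H$. Here the strictly decreasing quantity is simply the depth of the ancestor of $v_1$ being considered, so well-foundedness is immediate. I would recommend replacing your cross-tree bounce with this single-tree ascent rather than trying to invent a measure that survives the $T_H\leftrightarrow T_V$ exchange.
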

\begin{proof}
    We claim that it is sufficient to prove that if $v_1v_2\in E(T_V)$ and $v_1< v_2$, then
    $\min\Gamma(v_2)\in \Gamma(v_1)$. Indeed, if proved, the lemma follows in
    full generality by transitivity of the partial order (and by symmetry for
    $G_H$) and \Cref{cor:minGamma_exists}.

	\medskip

	Let us suppose that $v_1v_2\in E(T_V)$ and $v_1<v_2$. By
    \Cref{lemma:Tneighbors_gamma_intersect}, we have
    $\Gamma(v_1)\cap\Gamma(v_2)\neq\emptyset$. By
    \Cref{lemma:min_in_other_gamma}, we must have $\min\Gamma(v_1)\in
    \Gamma(v_2)$ or $\min\Gamma(v_2)\in \Gamma(v_1)$. If the latter holds, then
    we are done. If the latter does not hold, then $\min\Gamma(v_2)<\min\Gamma(v_1)$.

    \medskip

    Suppose that $v_1v_2\in E(T_H)$, $v_1<v_2$, $\min\Gamma(v_2)\not\in
    \Gamma(v_1)$ and $v_2$ is minimal with respect to these condition. If
    $v_1=v_\mathrm{root}$, then $h_\mathrm{root}=\min\Gamma(v_1)\in \Gamma(v_2)$,
    thus $\min\Gamma(v_2)=h_\mathrm{root}$, so $\min\Gamma(v_2)\in \Gamma(v_1)$,
    which is a contradiction. If $v_1\neq v_\mathrm{root}$, then let
    $v_0=\mathrm{parent}(v_1)$. By the minimality of $v_2$, we must have
    $\min\Gamma(v_1)\in \Gamma(v_0)$ and $\min\Gamma(v_0)\le \min\Gamma(v_1)$.
    By \Cref{lemma:le_comparable}, $\min\Gamma(v_0)$ and $\min\Gamma(v_2)$ are
    comparable.

    If $\min\Gamma(v_0)\le \min\Gamma(v_2)$, then $\min\Gamma(v_2)\in
    [\min\Gamma(v_0),\min\Gamma(v_1)]\subseteq \Gamma(v_0)$. But then
    $v_1\in [v_0,v_2]\subseteq \Gamma(\min\Gamma(v_2))$, which is a
    contradiction. Therefore we must have
    \begin{equation*}
        \min\Gamma(v_2)<\min\Gamma(v_0).
    \end{equation*}
    By induction (take the parent of $v_0$, if it exists), it follows that
    $\min\Gamma(v_2)<\min\Gamma(v_\mathrm{root})=h_\mathrm{root}$, which is
    a contradiction.
\end{proof}

\begin{corollary}\label{cor:minGG}
	For any $s\in G_H\cup G_V$ we have
	\begin{equation*}
		\min \Gamma\Gamma(s)=\min \Gamma(\min\Gamma(s)).
	\end{equation*}
\end{corollary}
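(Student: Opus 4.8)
The plan is to set $t_0 := \min\Gamma(s)$, which exists by \Cref{cor:minGamma_exists}, and then to show directly that $\min\Gamma(t_0)$ is simultaneously a \emph{member} of $\Gamma\Gamma(s)$ and a \emph{lower bound} for it. Since $\min\Gamma\Gamma(s)$ itself exists (again by \Cref{cor:minGamma_exists}), an element of the set that lies below every element of the set must be the minimum; this forces $\min\Gamma\Gamma(s) = \min\Gamma(t_0) = \min\Gamma(\min\Gamma(s))$, which is exactly the asserted identity.

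Membership is immediate. Because $t_0 = \min\Gamma(s)$ is in particular an element of $\Gamma(s)$, we have $\Gamma(t_0) \subseteq \Gamma\Gamma(s)$, and therefore $\min\Gamma(t_0) \in \Gamma(t_0) \subseteq \Gamma\Gamma(s)$.

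For the lower bound, I would take an arbitrary $x \in \Gamma\Gamma(s)$ and fix some $t \in \Gamma(s)$ with $x \in \Gamma(t)$, which exists by definition of $\Gamma\Gamma$. Applying \Cref{cor:minGamma_le_neighbor} to $s$ gives $t_0 = \min\Gamma(s) \le t$; feeding this comparability into the monotonicity statement \Cref{lemma:minle} yields $\min\Gamma(t_0) \le \min\Gamma(t)$. A second application of \Cref{cor:minGamma_le_neighbor}, this time to $t$ and its neighbor $x$, gives $\min\Gamma(t) \le x$, and transitivity of the order closes the chain to $\min\Gamma(t_0) \le x$. As $x$ was arbitrary, $\min\Gamma(t_0)$ is indeed a lower bound for all of $\Gamma\Gamma(s)$, completing the argument.

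I do not expect a serious obstacle: the corollary is essentially a packaged consequence of the monotonicity established in \Cref{lemma:minle}. The one point that deserves care is that $\le$ is only a \emph{partial} order, so a priori $\Gamma\Gamma(s)$ may well contain mutually incomparable elements. The argument deliberately sidesteps this by never comparing two elements of $\Gamma\Gamma(s)$ to each other; it only ever compares the single distinguished element $\min\Gamma(t_0)$ to each $x$, routed through the two intermediate inequalities. It is also worth confirming at the outset that every minimum invoked ($\min\Gamma(s)$, $\min\Gamma(t_0)$, and $\min\Gamma\Gamma(s)$) genuinely exists, all of which is guaranteed by \Cref{cor:minGamma_exists}, so that each use of $\min$ is legitimate.
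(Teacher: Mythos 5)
Your proposal is correct and follows the route the paper intends: the corollary is stated without proof as an immediate consequence of \Cref{lemma:minle}, and your derivation (exhibiting $\min\Gamma(\min\Gamma(s))$ as an element of $\Gamma\Gamma(s)$ that is a lower bound for it, via \Cref{cor:minGamma_le_neighbor} and the monotonicity of \Cref{lemma:minle}) is exactly the standard way to fill in that omitted argument. Your care about the order being only partial is well placed but, as you note, harmless here since you only ever compare the distinguished element to each $x$ through a chain of comparable intermediates.
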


\begin{corollary}\label{cor:minGG2}
	If $\min\Gamma(s_1)\le \min\Gamma(s_2)$, then $\min \Gamma\Gamma(s_1)\le \min\Gamma\Gamma(s_2)$.
\end{corollary}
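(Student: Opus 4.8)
The plan is to reduce the statement directly to the monotonicity lemma (\Cref{lemma:minle}) by first rewriting each double-neighborhood minimum as a nested single-neighborhood minimum via \Cref{cor:minGG}. Writing $t_1:=\min\Gamma(s_1)$ and $t_2:=\min\Gamma(s_2)$, \Cref{cor:minGG} gives $\min\Gamma\Gamma(s_i)=\min\Gamma(t_i)$ for $i\in\{1,2\}$, so it suffices to prove $\min\Gamma(t_1)\le\min\Gamma(t_2)$.

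First I would observe that the hypothesis $\min\Gamma(s_1)\le\min\Gamma(s_2)$ is precisely $t_1\le t_2$; in particular it forces $t_1$ and $t_2$ to be comparable and hence to lie in the same $R$-tree, so that the order relation and all the neighborhood minimums involved are well-defined (by \Cref{cor:minGamma_exists}). Applying \Cref{lemma:minle} to the comparable pair $t_1\le t_2$ immediately yields $\min\Gamma(t_1)\le\min\Gamma(t_2)$, which, by the rewriting above, is exactly $\min\Gamma\Gamma(s_1)\le\min\Gamma\Gamma(s_2)$.

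I do not expect any genuine obstacle here: the corollary is essentially \Cref{lemma:minle} applied one level up, with \Cref{cor:minGG} serving only to translate between $\Gamma\Gamma$ and a nested single $\Gamma$. The only point requiring a moment of care is confirming that the minimums exist and are comparable at each stage, which is guaranteed by \Cref{cor:minGamma_exists} together with the comparability already built into the hypothesis.
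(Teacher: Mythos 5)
Your proof is correct and matches the paper's intended argument: the corollary is stated without an explicit proof precisely because it follows by combining \Cref{cor:minGG} with \Cref{lemma:minle} applied to the comparable pair $\min\Gamma(s_1)\le\min\Gamma(s_2)$, exactly as you do. No issues.
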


\begin{lemma}\label{lemma:minGGcloser}
    For any $s\in G_H\cup G_V$, either $\min\Gamma\Gamma(s)<s$ or $s$ is the
    root of $T_H$ or $T_V$.
\end{lemma}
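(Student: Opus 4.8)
The plan is to show that $\min\Gamma\Gamma(s)$ is forced strictly below $s$ by exhibiting a neighbor of $\min\Gamma(s)$ that precedes $s$ in the order. First I would reduce to the case $s\in G_H$, the case $s\in G_V$ being symmetric. Since $G$ is bipartite, $\Gamma(s)\subseteq G_V$ and hence $\Gamma\Gamma(s)\subseteq G_H$; by \Cref{cor:minGG} the element $\min\Gamma\Gamma(s)=\min\Gamma(\min\Gamma(s))$ indeed lives in $G_H$, so comparing it to $s$ via the order on $G_H$ is meaningful.

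Assume $s\neq h_\mathrm{root}$, since otherwise the conclusion holds by the second disjunct. Write $w:=\min\Gamma(s)$ and let $p:=\mathrm{parent}(s)$, so that $p<s$ in $T_H$. The heart of the argument is to establish that $w\in\Gamma(p)$. To do this I would first invoke \Cref{lemma:Tneighbors_gamma_intersect} on the tree edge $ps$ to obtain $\Gamma(p)\cap\Gamma(s)\neq\emptyset$, and then apply \Cref{lemma:minle} to $p<s$ to obtain $\min\Gamma(p)\le\min\Gamma(s)=w$. With these two facts in hand, \Cref{lemma:min_in_other_gamma} (applied with $s_1=p$ and $s_2=s$) yields exactly $w=\min\Gamma(s)\in\Gamma(p)$.

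From $w\in\Gamma(p)$ it follows that $p\in\Gamma(w)$, and \Cref{cor:minGamma_le_neighbor} then gives $\min\Gamma(w)\le p<s$. Invoking \Cref{cor:minGG} once more, this reads $\min\Gamma\Gamma(s)=\min\Gamma(w)\le p<s$, which is precisely the desired strict inequality.

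I expect the only genuine obstacle to be the step $w\in\Gamma(p)$; everything else is routine order-theoretic bookkeeping. The subtlety is that \Cref{lemma:min_in_other_gamma} requires both a nonempty common neighborhood and the correct direction of the inequality between the two minima, and both ingredients are supplied exactly by the tree-adjacency of $p$ and $s$ together with the monotonicity of \Cref{lemma:minle}. Once $w$ is seen to be adjacent to the parent $p$, the minimality of $\min\Gamma(w)$ automatically drags $\min\Gamma\Gamma(s)$ below $s$.
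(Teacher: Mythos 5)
Your proof is correct, and its skeleton is the same as the paper's: both arguments reduce to showing that the parent $p$ of $s$ lies in $\Gamma\Gamma(s)$, whence $\min\Gamma\Gamma(s)\le p<s$. The difference is in how that membership is witnessed. The paper takes an \emph{arbitrary} common neighbor $t$ of $s$ and $p$ (which exists by \Cref{lemma:Tneighbors_gamma_intersect}) and observes $p\in\Gamma(t)\subseteq\Gamma\Gamma(s)$ directly; no monotonicity is needed. You instead insist on the \emph{specific} neighbor $w=\min\Gamma(s)$ and prove $w\in\Gamma(p)$, which forces you to invoke \Cref{lemma:minle} (the heaviest lemma in the section) together with \Cref{lemma:min_in_other_gamma}, and then \Cref{cor:minGG} to identify $\min\Gamma\Gamma(s)$ with $\min\Gamma(w)$. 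There is no circularity — \Cref{lemma:minle} and \Cref{cor:minGG} precede this lemma and do not depend on it — so your argument stands, but it buys nothing over the paper's one-line version: any element of $\Gamma\Gamma(s)$ bounds the minimum from above, so there is no need to route through the minimal neighbor.
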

\begin{proof}
    If $s$ is not the root of the appropriate $R$-tree, let $s_0$ be its parent.
    Since $G$ is connected, there exists a vertex that is joined to both $s$ and
    $s_0$, thus $\min\Gamma\Gamma(s)\le s_0<s$.
\end{proof}

\begin{lemma}\label{lemma:helly2}
    If $s_1,s_2,s_3\in \Gamma(s_0)$ and $\min\Gamma(s_0)<s_1\le s_2,s_3$, then
    $s_2$ and $s_3$ are comparable.
\end{lemma}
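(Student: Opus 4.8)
The plan is to exploit the fact that, by \Cref{def:basegraph}, $\Gamma(s_0)$ is the vertex set of a path in the $R$-tree $T$ opposite to $s_0$ (so $T=T_V$ if $s_0\in G_H$, and $T=T_H$ otherwise), and that $m:=\min\Gamma(s_0)$ exists by \Cref{cor:minGamma_exists} (via \Cref{lemma:connected_min}). The key structural observation is that $m$ is exactly the apex of this path: writing $\Gamma(s_0)$ as a path $[a,b]$ in $T$, its minimum in the sense of \Cref{def:minmax} is the unique vertex lying simultaneously on the root-to-$a$ and root-to-$b$ paths, i.e.\ the nearest common ancestor of $a$ and $b$. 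Deleting $m$ from $[a,b]$ leaves two chains $A,B$ (the two ``arms''), each totally ordered by the order of \Cref{def:order} and meeting only at $m$. Since $m<s_1$, the vertex $s_1$ is distinct from the apex and hence lies in exactly one arm, say $s_1\in A$.

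The core of the argument is then the claim that
\[
    U:=\{x\in\Gamma(s_0): s_1\le x\}
\]
is a chain. Take any $x\in U$. Then $x\ge s_1>m$, so $x\neq m$ and $x$ lies in one of the two arms. If $x$ lay in the arm $B$ different from the one containing $s_1$, then the subpath $[s_1,x]\subseteq\Gamma(s_0)$ (which is contained in $\Gamma(s_0)$ by \Cref{lemma:gamma_convex}) would cross the apex, so $m\in[s_1,x]$ and hence $\min[s_1,x]=m$. But $s_1\le x$ forces $[s_1,x]=\{z:s_1\le z\le x\}$, whose minimum is $s_1\neq m$; this contradiction shows $x\in A$. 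Therefore $U\subseteq A$ is contained in a chain and is itself a chain. Since $s_1\le s_2$ and $s_1\le s_3$ place $s_2,s_3\in U$, they are comparable, as desired.

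Alternatively, the same conclusion can be reached by contradiction, which may read more cleanly against the already-established lemmas. Assume $s_2,s_3$ incomparable. By \Cref{lemma:gamma_convex} we have $[s_2,s_3]\subseteq\Gamma(s_0)$, and its apex $m'=\min[s_2,s_3]$ (existing by \Cref{lemma:connected_min}) coincides with the nearest common ancestor of $s_2$ and $s_3$. Since $s_1\le s_2$ and $s_1\le s_3$ mean $s_1\in[r,s_2]\cap[r,s_3]$ for the root $r$ of $T$, \Cref{fact:intersection} identifies this intersection with $[r,m']$, so $s_1\le m'$. On the other hand the incomparability of $s_2,s_3$ places them on opposite arms of the path $\Gamma(s_0)$, forcing their nearest common ancestor to be the global apex, i.e.\ $m'=m$. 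Then $m<s_1\le m'=m$, a contradiction.

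I expect the only genuine obstacle to be the purely tree-combinatorial step identifying $\min\Gamma(s_0)$ with the apex of the path $\Gamma(s_0)$ and splitting that path into two arms at it; once this is in hand, everything else is bookkeeping with \Cref{def:order}, \Cref{cor:minGamma_le_neighbor}, and the convexity of neighbourhoods. Note finally where the hypothesis is used: $\min\Gamma(s_0)<s_1$ is precisely what guarantees that $s_1$ lies \emph{strictly} below the apex, so that the up-set $U$ cannot meet both arms. Without it, $s_1$ could equal the apex $m$, and then $s_2,s_3$ could sit on opposite arms and be incomparable, so the strictness in the hypothesis is essential.
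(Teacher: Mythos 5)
Your proof is correct and takes essentially the same approach as the paper's: both split the path $\Gamma(s_0)$ at its minimum (the nearest common ancestor of its endpoints) into two chains and use the strict inequality $\min\Gamma(s_0)<s_1$ to rule out $s_2$ and $s_3$ lying in different chains. The paper states this more tersely as a case analysis on whether $s_2,s_3\le a$ or $s_2,s_3\le b$ for $\{a,b\}=\partial\Gamma(s_0)$, citing \Cref{lemma:le_comparable}, but the underlying argument is identical to yours.
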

\begin{proof}
    Let $\{a,b\}=\partial\Gamma(s_0)$. If $s_2,s_3\le a$ or  $s_2,s_3\le b$,
    then the statement follows from \Cref{lemma:le_comparable}. If $s_2\le a$
    and $s_3\le b$, then $s_1\le a,b$ and so $s_1\le \min\Gamma(s_0)$, which is
    a contradiction.
\end{proof}

\section{Refining the partial order}\label{sec:refined}
\Cref{alg:main}, described in \Cref{sec:cover}, is a greedy algorithm that
processes elements of $G_V$ one by one. However, we need to refine the $<$
order, such that the new partial order corresponds to the intuitive notion of sweeping.
\begin{definition}\label{def:unifiedorder}
    Let us define the partial order $<_b$ on $G_H\cup G_V$ as follows:
\begin{itemize}
    \item if $h_1,h_2\in G_H$, then let
    \begin{align*}
        h_1<_b h_2 \Longleftrightarrow\ &\min\Gamma(h_1)<\min\Gamma(h_2)\text{\ or }\\
        &\min\Gamma(h_1)=\min\Gamma(h_2)\text{\ and }h_1<h_2;
    \end{align*}
    \item if $v_1,v_2\in G_V$, then let
    \begin{align*}
        v_1<_b v_2 \Longleftrightarrow\ &\min\Gamma(v_1)<\min\Gamma(v_2)\text{\ or }\\
        &\min\Gamma(v_1)=\min\Gamma(v_2)\text{\ and }v_1<v_2;
    \end{align*}
    \item if $h\in G_H$ and $v\in G_V$, then $h$ and $v$ are not comparable.
\end{itemize}
Naturally, we define $s_1\le_b s_2\Longleftrightarrow (s_1=s_2\text{ or }s_1<_b s_2)$.
\end{definition}

\begin{lemma}\label{lemma:partial_order_b}
    The relation $<_b$ is a partial order that extends $<$.
\end{lemma}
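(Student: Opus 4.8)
The plan is to observe that $<_b$, restricted to $G_H$ (and likewise to $G_V$), is exactly a lexicographic refinement of $<$: the primary key of a vertex $h\in G_H$ is the value $\min\Gamma(h)\in G_V$, compared under the partial order $<$ on $G_V$, and the tie-breaker is the order $<$ on $G_H$ itself. Since \Cref{def:unifiedorder} declares elements of $G_H$ incomparable to elements of $G_V$ under $<_b$, just as \Cref{def:order} does under $<$, there is no interaction between the two blocks, and I would treat each block separately; by the symmetry between $G_H$ and $G_V$ I would only write out $G_H$. Before anything else I would record that the primary key $\min\Gamma(h)$ is well defined for every $h\in G_H$ by \Cref{cor:minGamma_exists}, so every comparison occurring in the definition of $<_b$ makes sense.

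First I would check that $<_b$ extends $<$. Suppose $h_1<h_2$. By \Cref{lemma:minle} we have $\min\Gamma(h_1)\le\min\Gamma(h_2)$. If this inequality is strict, the first clause of \Cref{def:unifiedorder} yields $h_1<_b h_2$ at once; if $\min\Gamma(h_1)=\min\Gamma(h_2)$, then the hypothesis $h_1<h_2$ activates the second clause and again gives $h_1<_b h_2$. This is the only place where \Cref{lemma:minle} is needed, and it is precisely what guarantees that the refinement never contradicts the original order.

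Next I would verify the axioms of a strict partial order. Irreflexivity is immediate, since $h<_b h$ would force either $\min\Gamma(h)<\min\Gamma(h)$ or $h<h$, both impossible. For transitivity, write $m_i:=\min\Gamma(h_i)$ and assume $h_1<_b h_2$ and $h_2<_b h_3$; expanding each hypothesis into its two clauses produces four cases. In the three cases where at least one step strictly lowers the primary key, transitivity of $<$ on $G_V$ gives $m_1<m_3$, hence $h_1<_b h_3$; in the remaining case $m_1=m_2=m_3$ while $h_1<h_2<h_3$, so transitivity of $<$ on $G_H$ gives $h_1<h_3$ and again $h_1<_b h_3$. Antisymmetry (equivalently, asymmetry of the strict relation) then follows formally from irreflexivity and transitivity. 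The one point deserving care is that the primary key $<$ on $G_V$ is only a partial, not a total, order, so one might fear the usual lexicographic bookkeeping breaks; the case analysis above shows it does not, precisely because whenever the primary keys are not forced equal one of them is strictly below the other and transitivity of $<$ on $G_V$ closes the gap. I expect this verification of transitivity to be the only nontrivial step, and it is entirely mechanical once the lexicographic structure is made explicit.
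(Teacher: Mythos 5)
Your proof is correct and follows essentially the same route as the paper: the extension property via \Cref{lemma:minle}, and the strict-partial-order axioms via a lexicographic case analysis that reduces everything to irreflexivity and transitivity of $<$ (the paper's own proof is just a terser statement of this, and your explicit four-case check of transitivity is the right way to dispel the worry about the primary key being only a partial order). The only cosmetic slip is the phrase ``strictly lowers the primary key,'' where you mean the key strictly increases along that step; the mathematics is unaffected.
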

\begin{proof}
    The extension property follows from \Cref{lemma:minle} and \Cref{cor:minGG2}.
    The irreflexivity of $<_b$ follows from the irreflexivity of $<$. The asymmetry of
    $<_b$ also follows from the asymmetry of $<$. Transitivity of $<_b$ follows
    from the transitivity of $<$ and \Cref{cor:minGG,cor:minGG2}.
\end{proof}

It is well-known that any partial order can be extended to a total or linear
order.
\begin{definition}[Linear order on the vertices]\label{def:totalorder}
    Let $\prec$ be a refinement of $<_b$ on $G_H\cup G_V$, such that $\prec$
    is a linear order when restricted to $G_H$ and $G_V$. If $h\in G_H$ and
    $v\in G_V$, then $h$ and $v$ are not comparable with respect to $\prec$.
\end{definition}

\begin{lemma}\label{lemma:linear_order_min_Gamma}
    Suppose $\min\Gamma(s)$ and $\min\Gamma(s')$ are comparable. If $s\prec
    s'$, then $\min\Gamma(s)\le \min\Gamma(s')$.
\end{lemma}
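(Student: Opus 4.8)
The plan is to reduce everything to the defining property of $<_b$ together with the fact that $<_\ell$ is a linear extension of it. First I would observe that, since $s<_\ell s'$ holds, the two vertices are comparable under $<_\ell$; because $<_\ell$ never compares a member of $G_H$ with a member of $G_V$ (\Cref{def:totalorder}), $s$ and $s'$ must lie on the same side of the bipartition, so that the relevant clause of \Cref{def:unifiedorder} applies to the pair. The comparability hypothesis on $\min\Gamma(s)$ and $\min\Gamma(s')$ (both of which exist by \Cref{cor:minGamma_exists}) then yields a clean dichotomy: either $\min\Gamma(s)\le\min\Gamma(s')$, which is exactly the desired conclusion, or $\min\Gamma(s')<\min\Gamma(s)$.

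Second, I would dispose of the remaining case by contradiction. If $\min\Gamma(s')<\min\Gamma(s)$, then the first clause in the definition of $<_b$ immediately gives $s'<_b s$. Since $<_\ell$ refines $<_b$ (\Cref{lemma:partial_order_b} and \Cref{def:totalorder}), this forces $s'<_\ell s$. Combined with the assumed $s<_\ell s'$, this violates the asymmetry of the linear order $<_\ell$, a contradiction. Hence only the first alternative survives, so $\min\Gamma(s)\le\min\Gamma(s')$.

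I do not expect any genuine obstacle here: the statement is essentially a restatement of the fact that $<_b$ orders same-side vertices \emph{primarily} by their $\min\Gamma$ value, and that $<_\ell$ respects $<_b$. The only point requiring care is confirming that $s$ and $s'$ genuinely sit on the same side of the bipartition before invoking \Cref{def:unifiedorder}; this is guaranteed by $s<_\ell s'$ together with the convention that cross-side pairs are incomparable under $<_\ell$. The comparability hypothesis on the two minima is precisely what rules out the situation in which neither $\min\Gamma(s)\le\min\Gamma(s')$ nor $\min\Gamma(s')\le\min\Gamma(s)$ holds, making the dichotomy used in the second step exhaustive.
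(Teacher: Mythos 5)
Your proof is correct and follows essentially the same route as the paper's: in the case $\min\Gamma(s')<\min\Gamma(s)$ one gets $s'<_b s$ directly from \Cref{def:unifiedorder}, hence $s'<_\ell s$, contradicting $s<_\ell s'$. The extra care you take in verifying that $s$ and $s'$ lie on the same side of the bipartition is a harmless elaboration of the same argument.
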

\begin{proof}
	If $\min\Gamma(s')<\min\Gamma(s)$, then by definition, $s'<_b s$, which
	contradicts $s\prec s'$.
\end{proof}

\begin{lemma}\label{lemma:linear_order_intersecting_neighborhoods}
    If $s\prec s'$ and $\Gamma(s)\cap \Gamma(s')\neq\emptyset$, then
    $\min\Gamma(s')\in \Gamma(s)$ and $\min\Gamma(s)\le\min\Gamma(s')$.
\end{lemma}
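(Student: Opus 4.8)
The plan is to chain together three of the elementary lemmas already established in \Cref{sec:partial}, using the nonemptiness of $\Gamma(s)\cap\Gamma(s')$ as the common hypothesis throughout. The two conclusions are proved in sequence, with the first inequality feeding directly into the proof of the membership statement.

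First I would derive the second conclusion, $\min\Gamma(s)\le\min\Gamma(s')$. The subtlety is that \Cref{lemma:linear_order_min_Gamma} only applies once we know that $\min\Gamma(s)$ and $\min\Gamma(s')$ are comparable, so this comparability must be secured as a preliminary step. This is exactly what \Cref{lemma:minsarecomparable} provides: since $\Gamma(s)\cap\Gamma(s')\neq\emptyset$ by hypothesis, $\min\Gamma(s)$ and $\min\Gamma(s')$ are comparable. With the precondition in hand, \Cref{lemma:linear_order_min_Gamma} applies to the hypothesis $s<_\ell s'$ and yields $\min\Gamma(s)\le\min\Gamma(s')$.

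With the inequality established, I would then obtain the membership statement $\min\Gamma(s')\in\Gamma(s)$ as an immediate consequence of \Cref{lemma:min_in_other_gamma}. That lemma takes precisely the two facts now available, namely $\Gamma(s)\cap\Gamma(s')\neq\emptyset$ and $\min\Gamma(s)\le\min\Gamma(s')$, and concludes that the smaller-indexed minimum's companion, $\min\Gamma(s')$, lies in $\Gamma(s)$.

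There is no genuine obstacle here; the lemma is a bookkeeping consequence of the machinery built up earlier, and the only point requiring care is to verify the comparability precondition of \Cref{lemma:linear_order_min_Gamma} before invoking it rather than assuming it. The entire argument is a three-line deduction: comparability from \Cref{lemma:minsarecomparable}, the inequality from \Cref{lemma:linear_order_min_Gamma}, and the membership from \Cref{lemma:min_in_other_gamma}.
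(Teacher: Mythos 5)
Your argument is correct and takes essentially the same route as the paper, whose proof simply cites \Cref{lemma:minsarecomparable} and \Cref{lemma:linear_order_min_Gamma}. Your explicit appeal to \Cref{lemma:min_in_other_gamma} for the membership conclusion $\min\Gamma(s')\in\Gamma(s)$ only spells out a step the paper leaves implicit.
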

\begin{proof}
    Follows from \Cref{lemma:minsarecomparable,lemma:linear_order_min_Gamma}.
\end{proof}

The following two lemmas will play a very important role later on.
\Cref{lemma:inheriting_neighbor} shows that neighborhoods are co-descending with
respect to $\prec$. A possible interpretation of \Cref{lemma:ell_gamma_convex}
is that ``holes'' are not allowed. The proofs of the next two lemmas are mildly
technical, so they are postponed to \Cref{sec:postponed_refined}.
\begin{lemma}\label{lemma:inheriting_neighbor}
    Let $h_1,h_2\in G_H$ and $v_1,v_2\in G_V$ be such that $h_1\preceq h_2$ and
    $v_1\preceq v_2$. If $h_1,h_2\in \Gamma(v_2)$ and $h_2\in \Gamma(v_1)$,
    then $h_1\in \Gamma(v_1)$.
\end{lemma}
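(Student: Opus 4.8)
The plan is to reduce the entire statement to the single comparability $h_1\le h_2$ in $T_H$. Suppose $h_1\le h_2$. Since $h_2\in\Gamma(v_1)\cap\Gamma(v_2)$ and $v_1\le_\ell v_2$, \Cref{lemma:linear_order_intersecting_neighborhoods} yields $\min\Gamma(v_2)\in\Gamma(v_1)$. As $h_1\in\Gamma(v_2)$ we have $\min\Gamma(v_2)\le h_1$, so $\min\Gamma(v_2)\le h_1\le h_2$ gives $h_1\in[\min\Gamma(v_2),h_2]$. Both endpoints lie in $\Gamma(v_1)$, so convexity (\Cref{lemma:gamma_convex}) gives $h_1\in[\min\Gamma(v_2),h_2]\subseteq\Gamma(v_1)$, which is the claim. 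By the symmetric argument (exchanging the roles of $T_H$ and $T_V$) it likewise suffices to prove $v_1\le v_2$: in that case $v_1\in[\min\Gamma(h_2),v_2]\subseteq\Gamma(h_1)$. (The degenerate cases $h_1=h_2$ or $v_1=v_2$ are immediate.)

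Thus it is enough to show that at least one of $h_1\le h_2$, $v_1\le v_2$ holds. First I would dispose of the ``wrong'' comparabilities. If $h_2<h_1$, then \Cref{lemma:minle} gives $\min\Gamma(h_2)\le\min\Gamma(h_1)$, and together with $h_2<h_1$ the definition of $<_b$ (\Cref{def:unifiedorder}) forces $h_2<_b h_1$, hence $h_2<_\ell h_1$ since $<_\ell$ refines $<_b$, contradicting $h_1\le_\ell h_2$; symmetrically $v_2<v_1$ is impossible. Consequently, if $h_1,h_2$ are comparable we are done, and likewise if $v_1,v_2$ are comparable. The only surviving case is that $h_1,h_2$ are incomparable \emph{and} $v_1,v_2$ are incomparable.

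This doubly incomparable case is the main obstacle, and I would resolve it by deriving a contradiction. Write $M=\min\Gamma(v_2)$ and $N=\min\Gamma(h_2)$. Applying \Cref{lemma:linear_order_intersecting_neighborhoods} to $v_1\le_\ell v_2$ (common neighbor $h_2$) and to $h_1\le_\ell h_2$ (common neighbor $v_2$) shows $M\in\Gamma(v_1)$ and $N\in\Gamma(h_1)$. Hence $h_2$ and $M$ are both common neighbors of the incomparable pair $v_1,v_2$, while $v_2$ and $N$ are both common neighbors of the incomparable pair $h_1,h_2$. \Cref{lemma:incomparable_common_neighbors} then delivers the two dual identities $\min\Gamma(M)=\min\Gamma(h_2)=N$ and $\min\Gamma(N)=\min\Gamma(v_2)=M$.

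Finally I would exploit these identities near the root. Since $v_1,v_2\in\Gamma(h_2)$ are incomparable, \Cref{lemma:leaf_comparable} forbids $\min\Gamma(h_2)=v_\mathrm{root}$, so $N\neq v_\mathrm{root}$ and $N'=\mathrm{parent}(N)$ exists with $N'<N$. As $N'$ and $N$ are neighbors in $T_V$, \Cref{lemma:Tneighbors_gamma_intersect} gives $\Gamma(N')\cap\Gamma(N)\neq\emptyset$, and $N'<N$ gives $\min\Gamma(N')\le\min\Gamma(N)$ by \Cref{lemma:minle}; \Cref{lemma:min_in_other_gamma} then places $M=\min\Gamma(N)\in\Gamma(N')$, i.e. $N'\in\Gamma(M)$. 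But $\min\Gamma(M)=N$ forces every element of $\Gamma(M)$ to be $\ge N$, whereas $N'<N$ --- a contradiction. This rules out the doubly incomparable case and completes the proof. The delicate step, which I expect to require the most care, is establishing the two dual minimum identities through \Cref{lemma:incomparable_common_neighbors}: it is precisely the incomparability of \emph{both} pairs that makes them available, and they are exactly what collide at $\mathrm{parent}(N)$.
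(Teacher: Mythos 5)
Your proof is correct, and every lemma you invoke is applied within its hypotheses. (For orientation: the paper's own proof of this statement is the first proof in the appendix, which is mislabelled as a proof of \Cref{lemma:linear_order_intersecting_neighborhoods}.) Your handling of the comparable cases --- reduce to $h_1\le h_2$ or $v_1\le v_2$ and sandwich $h_1\in[\min\Gamma(v_2),h_2]\subseteq\Gamma(v_1)$, resp.\ $v_1\in[\min\Gamma(h_2),v_2]\subseteq\Gamma(h_1)$ --- matches the paper's Case~1 and the conclusion of its Cases~2--3. Where you genuinely diverge is the hard case. The paper never assumes $v_1,v_2$ incomparable: it splits on whether $\min\Gamma(h_2)=v_\mathrm{root}$, derives the single identity $\min\Gamma\Gamma(h_2)=\min\Gamma(v_2)=\min\Gamma(v_1)$, and then forces $v_1,v_2$ to be comparable via \Cref{lemma:helly2} combined with the strict descent $\min\Gamma\Gamma\Gamma(h_2)<\min\Gamma(h_2)$ from \Cref{lemma:minGGcloser}, falling back into the easy case. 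You instead assume \emph{both} pairs incomparable and apply \Cref{lemma:incomparable_common_neighbors} twice, once in each direction, to obtain the symmetric pair $\min\Gamma(M)=N$ and $\min\Gamma(N)=M$; your contradiction at $\mathrm{parent}(N)$ is then a hands-on re-derivation of the same descent fact (you could shorten it by citing \Cref{cor:minGG} and \Cref{lemma:minGGcloser} directly, which give $\min\Gamma(M)=\min\Gamma\Gamma(N)<N$ immediately). The two arguments hinge on the same underlying obstruction, but yours buys a more symmetric case analysis and avoids \Cref{lemma:helly2} entirely, at the price of needing the double-incomparability assumption to unlock the second application of \Cref{lemma:incomparable_common_neighbors}; the paper's version is slightly more general in that its Case~3 proves comparability of $v_1,v_2$ outright rather than merely refuting its negation.
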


\begin{lemma}\label{lemma:ell_gamma_convex}
    Let $h_1,h_2,h_3\in G_H$ and $v_1,v_2,v_3\in G_V$ be such that $h_1\preceq
    h_2\preceq h_3$ and $v_1\preceq v_2\preceq v_3$. If $h_2,h_3\in
    \Gamma(v_1)$, and $h_1,h_3\in \Gamma(v_2)$, and $h_1,h_2\in \Gamma(v_3)$
    then $h_2\in \Gamma(v_2)$.
\end{lemma}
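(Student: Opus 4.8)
The plan is to prove the equivalent statement $v_2\in\Gamma(h_2)$, so that the convexity of neighbourhoods (\Cref{lemma:gamma_convex}) can do the final work. First I would extract the consequences of the three ``cyclic'' adjacencies. The pairs $h_1,h_2$; $h_1,h_3$; $h_2,h_3$ have the common neighbours $v_3$, $v_2$, $v_1$ respectively, so \Cref{lemma:linear_order_intersecting_neighborhoods}, applied along $h_1<_\ell h_2<_\ell h_3$, yields $\min\Gamma(h_1)\le\min\Gamma(h_2)\le\min\Gamma(h_3)$ together with $\min\Gamma(h_2),\min\Gamma(h_3)\in\Gamma(h_1)$. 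Writing $n_i:=\min\Gamma(h_i)$, \Cref{cor:minGamma_le_neighbor} gives $n_2\le v_1,v_3$ (from $v_1,v_3\in\Gamma(h_2)$) and $n_3\le v_1,v_2$ (from $v_1,v_2\in\Gamma(h_3)$); combined with $n_2\le n_3$ this shows $n_2\le v_1,v_2,v_3$.

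The crucial first move is to enrich $\Gamma(h_1)$. Applying \Cref{lemma:inheriting_neighbor} with its variables $(h_1,h_2,v_1,v_2)$ instantiated as $(h_1,h_2,v_1,v_3)$ — its hypotheses $h_1\le_\ell h_2$, $v_1\le_\ell v_3$, $h_1,h_2\in\Gamma(v_3)$ and $h_2\in\Gamma(v_1)$ all hold — produces $h_1\in\Gamma(v_1)$, i.e. $v_1\in\Gamma(h_1)$. Hence $\{v_1,v_2,v_3\}\subseteq\Gamma(h_1)$, and by \Cref{def:basegraph}(iv) these three vertices all lie on the \emph{single} path $\Gamma(h_1)$ of $T_V$.

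Assume first the generic situation $n_1<n_2$. Then $n_2\in\Gamma(h_1)$ is not the top (minimum) of the path $\Gamma(h_1)$, so the vertices of $\Gamma(h_1)$ that are descendants of $n_2$ form a single chain. As $v_1,v_2,v_3\ge n_2$ all belong to this chain, they are pairwise comparable. Along a chain \Cref{lemma:minle} makes $\min\Gamma$ monotone, so on $\{v_1,v_2,v_3\}$ the order $<_b$ of \Cref{def:unifiedorder}, and therefore its refinement $<_\ell$, coincides with the tree order; thus $v_1<_\ell v_2<_\ell v_3$ forces $v_1<v_2<v_3$ and in particular $v_2\in[v_1,v_3]$. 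Since $v_1,v_3\in\Gamma(h_2)$, \Cref{lemma:gamma_convex} gives $[v_1,v_3]\subseteq\Gamma(h_2)$, whence $v_2\in\Gamma(h_2)$, as wanted.

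The main obstacle is the degenerate case $n_1=n_2$: here $n_2$ is the top of the path $\Gamma(h_1)$, its descendants occupy \emph{both} arms, $v_1$ and $v_3$ may sit on opposite arms, and the chain argument collapses. The whole configuration is symmetric under exchanging $(G_H,T_H)\leftrightarrow(G_V,T_V)$ together with the reversal $1\leftrightarrow 3$; running the identical argument inside $T_H$ (with $m_i:=\min\Gamma(v_i)$) narrows the genuine obstruction to the simultaneous equalities $\min\Gamma(h_1)=\min\Gamma(h_2)$ and $\min\Gamma(v_1)=\min\Gamma(v_2)$. I expect to settle this surviving case by exploiting the special choice of roots (\Cref{def:roots}, with $v_\mathrm{root}$ a leaf) via \Cref{lemma:leaf_comparable} and the Helly-type \Cref{lemma:helly2}, which force $v_1$ and $v_3$ to be comparable in exactly the configurations that remain. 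Alternatively, assuming $v_2\notin\Gamma(h_2)$ and feeding this back into \Cref{lemma:inheriting_neighbor} (in the $H\leftrightarrow V$ swapped form, with variables instantiated as $(v_2,v_3,h_2,h_3)$) yields the non-edge $v_3\notin\Gamma(h_3)$, and I would then extract a contradiction from the resulting over-constrained adjacency pattern by an induction along $<_\ell$. This degenerate analysis is where essentially all the difficulty of the lemma resides.
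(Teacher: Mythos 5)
Your treatment of the generic case $\min\Gamma(h_1)<\min\Gamma(h_2)$ is correct, and it is in fact a slightly cleaner entry point than the paper's, which instead splits on whether $v_2,v_3$ (resp.\ $h_2,h_3$) are comparable; both routes begin identically, upgrading the hypotheses to $v_1,v_2,v_3\in\Gamma(h_1)$ via \Cref{lemma:inheriting_neighbor} and then using \Cref{lemma:helly2} (with $s_1=\min\Gamma(h_2)$) to force pairwise comparability, after which \Cref{lemma:gamma_convex} finishes. The reduction, by the $G_H\leftrightarrow G_V$ symmetry, to the doubly degenerate situation $\min\Gamma(h_1)=\min\Gamma(h_2)$ and $\min\Gamma(v_1)=\min\Gamma(v_2)$ is also legitimate.

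However, that doubly degenerate case is where the entire difficulty lives, as you yourself observe, and you have not proved it: you offer two alternative plans, neither of which contains the mechanism that actually closes the argument. What the paper does there is the following. One may further assume that $v_2,v_3$ and $h_2,h_3$ are each incomparable (otherwise, e.g.\ $v_2\le v_3$ together with $\min\Gamma(h_2)=\min\Gamma(h_1)\le v_2$ gives $v_2\in[\min\Gamma(h_2),v_3]\subseteq\Gamma(h_2)$ at once). Under these incomparability assumptions one shows, working inside $\Gamma(\min\Gamma(h_1))$ with \Cref{lemma:inheriting_neighbor} and \Cref{lemma:helly2}, that $\min\Gamma\Gamma(h_1)=\min\Gamma(v_1)$, and by symmetry $\min\Gamma\Gamma(v_1)=\min\Gamma(h_1)$; chaining these via \Cref{cor:minGG} yields the fixed point $\min\Gamma\Gamma\Gamma(h_1)=\min\Gamma(h_1)$, which by \Cref{lemma:minGGcloser} is only possible if $\min\Gamma(h_1)=v_\mathrm{root}$, whereupon \Cref{lemma:leaf_comparable} makes $v_2$ and $v_3$ comparable --- the desired contradiction. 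Your first plan correctly names \Cref{lemma:leaf_comparable} and \Cref{lemma:helly2} as ingredients but is missing the decisive step, the triple-$\Gamma$ fixed-point argument and \Cref{lemma:minGGcloser}, without which there is no way to conclude that $\min\Gamma(h_1)$ is the root; your second plan (an induction along $<_\ell$ starting from the putative non-edge) is not carried out and does not correspond to any argument known to work here. As it stands, the proposal establishes the lemma only when $\min\Gamma(h_1)<\min\Gamma(h_2)$ or $\min\Gamma(v_1)<\min\Gamma(v_2)$.
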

Note that these lemmas are completely symmetric with respect to exchanging $G_V$
and $G_H$ (see~\Cref{fig:two_lemmas}), even though the lemmas are formally
stated asymmetrically.
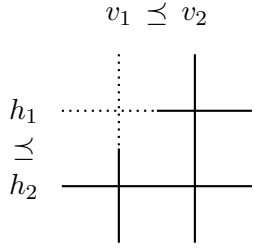
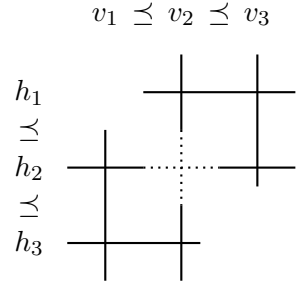
\begin{figure}
	\centering
	\begin{subfigure}[t]{0.4\textwidth}
	\centering
	\begin{tikzpicture}
		\draw[dotted] (1,2.75) -- (1,1.5);
		\draw (1,1.5) -- (1,0.25);
		\draw (2,2.75)--(2,0.25);

		\draw[dotted] (0.25,2)--(1.5,2);
		\draw (1.5,2)--(2.75,2);
		\draw (0.25,1)--(2.75,1);

		\node[above] at (1,3) {$v_1$};
		\node[above] at (1.5,3) {$\preceq$};
		\node[above] at (2,3) {$v_2$};

		\node at (-0.25,2) {$h_1$};
		\node at (-0.25,1.5) {$\preceq$};
		\node at (-0.25,1) {$h_2$};
	\end{tikzpicture}
	\caption{Given the assumptions and
		premises of \Cref{lemma:inheriting_neighbor},
	one can conclude that $h_1v_1\in E(G)$.}\label{fig:lemma:inheriting_neighbor}
	\end{subfigure}%
	\hfill%
	\begin{subfigure}[t]{0.4\textwidth}
	\centering
	\begin{tikzpicture}

		\draw (2,3.5)--(2,1.75);
		\draw (1,3.5) -- (1,2.5);
		\draw[dotted] (1,2.5) -- (1,1.5);
		\draw (1,1.5) -- (1,0.5);
		\draw (0,2.5)--(0,0.5);

		\draw (-0.5,1) -- (1.25,1);
		\draw (-0.5,2) -- (0.5,2);
		\draw[dotted] (0.5,2)--(1.5,2);
		\draw (1.5,2)--(2.5,2);
		\draw (0.5,3)--(2.5,3);

		\node[above] at (0,3.75) {$v_1$};
		\node[above] at (0.5,3.75) {$\preceq$};
		\node[above] at (1,3.75) {$v_2$};
		\node[above] at (1.5,3.75) {$\preceq$};
		\node[above] at (2,3.75) {$v_3$};

		\node at (-1,3) {$h_1$};
		\node at (-1,2.5) {$\preceq$};
		\node at (-1,2) {$h_2$};
		\node at (-1,1.5) {$\preceq$};
		\node at (-1,1) {$h_3$};
	\end{tikzpicture}
    \caption{Given the assumptions and premises of
    \Cref{lemma:ell_gamma_convex}, one can conclude that $h_2v_2\in
    E(G)$.}\label{fig:lemma:ell_gamma_convex}
	\end{subfigure}
	\caption{Visualizing
		\Cref{lemma:inheriting_neighbor,lemma:ell_gamma_convex}. The vertical
		lines represent $v_1,v_2$ (and $v_3$), while the horizontal lines
		represent $h_1,h_2$ (and $h_3$). An intersection of the solid sections of two
		lines indicates that the represented elements are assumed to be joined
		by an edge in the premise of the appropriate lemma. 
		The two lines that intersect with their dotted sections represent the
		vertices of the edge that appears in the conclusion of
		each lemma.}\label{fig:two_lemmas}
\end{figure}

\section{Guards, covers, and independence}\label{sec:guards}
Let us define $r$-guards and $r$-independence with respect to the tree-based
bipartite graph
$G$. \Cref{lemma:indep} explains the abstract definition of $r$-independence.
Recall \Cref{def:rvisionedge,def:geometrytograph,def:basegraph}. Let us explicitly define
$r$-guards in a tree-based bipartite graph $G$.
\begin{definition}\label{def:rguard}
    An $r$-guard in $G$ is an edge $hv\in E(G)$. The $r$-guard $hv$ covers an edge
    $h'v'\in E(G)$ if and only if $h'v,hv'\in E(G)$.
\end{definition}

\begin{definition}\label{def:independent}
    Two edges $h_1v_1$ and $h_2v_2$ are $r$-independent in $G$ if and only if
    $\Gamma(h_1)\cap \Gamma(h_2)=\emptyset$ or $\Gamma(v_1)\cap \Gamma(v_2)=\emptyset$.
    If two edges are not $r$-independent, then we call them $r$-dependent.
\end{definition}
\begin{lemma}\label{lemma:indep}
    Two edges are $r$-independent in $G$ if and only if there does not exist an
    $r$-guard $e\in E(G)$ that covers both edges.
\end{lemma}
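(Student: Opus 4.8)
The plan is to prove the statement in its negated-on-both-sides form, which is equivalent: an $r$-guard $e=hv$ covers both $h_1v_1$ and $h_2v_2$ if and only if the two edges are \emph{$r$-dependent}, i.e.\ $\Gamma(h_1)\cap\Gamma(h_2)\neq\emptyset$ and $\Gamma(v_1)\cap\Gamma(v_2)\neq\emptyset$ (\Cref{def:independent}). The easy direction is pure unpacking of \Cref{def:rguard}: if $e=hv$ covers $h_iv_i$, then $hv_i,h_iv\in E(G)$, so $v\in\Gamma(h_i)$ and $h\in\Gamma(v_i)$ for $i=1,2$. Hence $v\in\Gamma(h_1)\cap\Gamma(h_2)$ and $h\in\Gamma(v_1)\cap\Gamma(v_2)$, and both intersections are nonempty, which is exactly $r$-dependence.

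The substantive direction is to \emph{build} a common guard out of $r$-dependence. First I would fix any $v^*\in\Gamma(h_1)\cap\Gamma(h_2)$ (no minimality needed), so that $h_1,h_2\in\Gamma(v^*)$. The candidate guard is $hv^*$ for a suitable $h$; to certify it I need $h\in\Gamma(v^*)$ (to ensure $hv^*\in E(G)$) together with $h\in\Gamma(v_1)\cap\Gamma(v_2)$, since then $hv_i\in E(G)$ and $h_iv^*\in E(G)$ for $i=1,2$, which is precisely coverage of $h_iv_i$. Thus the whole reverse direction reduces to establishing $\Gamma(v^*)\cap\Gamma(v_1)\cap\Gamma(v_2)\neq\emptyset$ and then taking $h$ from this triple intersection.

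To obtain the triple intersection I would run a Helly-type argument on the three sets $\Gamma(v^*),\Gamma(v_1),\Gamma(v_2)$, which are paths in $T_H$ by \Cref{def:basegraph}. Their pairwise intersections are all nonempty: $h_1\in\Gamma(v^*)\cap\Gamma(v_1)$ because $h_1\in\Gamma(v^*)$ and $h_1v_1\in E(G)$, symmetrically $h_2\in\Gamma(v^*)\cap\Gamma(v_2)$, and $\Gamma(v_1)\cap\Gamma(v_2)\neq\emptyset$ by hypothesis. Now the three minimums $\min\Gamma(v^*),\min\Gamma(v_1),\min\Gamma(v_2)$ exist by \Cref{cor:minGamma_exists} and are pairwise comparable by \Cref{lemma:minsarecomparable}, so they form a chain; let $m$ be its largest element. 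For each of the other two neighborhoods, \Cref{lemma:min_in_other_gamma}, applied to the pair consisting of that neighborhood (smaller-or-equal minimum) and the one realizing $m$ (using that their intersection is nonempty), places $m$ inside that neighborhood. Hence $m$ lies in all three paths, and setting $h:=m$ completes the construction.

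The step I expect to be the main obstacle is exactly this Helly argument, as it is the only place where the tree-path structure of $G$ is genuinely exploited; carrying it out cleanly through the min-comparability lemmas (\Cref{lemma:minsarecomparable} and \Cref{lemma:min_in_other_gamma}) rather than by a bare appeal to the Helly property of subtrees of a tree is what makes the direction work within the established framework. Everything else is routine translation between \Cref{def:rguard}, \Cref{def:rvisionedge}, and the adjacency relation $t\in\Gamma(s)\Leftrightarrow st\in E(G)$.
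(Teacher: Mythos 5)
Your proof is correct, but it takes a genuinely different route from the paper's. The paper orders the two edges by the linear order $<_\ell$, invokes \Cref{lemma:linear_order_intersecting_neighborhoods} to get $\min\Gamma(v_2)\in\Gamma(v_1)$, uses \Cref{lemma:inheriting_neighbor} to certify that $\min\Gamma(v_2)\min\Gamma(h_2)$ is an edge, and then splits into cases $h_1\le_\ell h_2$ versus $h_2<_\ell h_1$, the second of which needs the convexity lemma \Cref{lemma:ell_gamma_convex}; the common guard it exhibits is either $\min\Gamma(v_2)\min\Gamma(h_2)$ or $h_2v_1$. You instead pick an arbitrary $v^*\in\Gamma(h_1)\cap\Gamma(h_2)$ and reduce everything to the Helly property of the three paths $\Gamma(v^*),\Gamma(v_1),\Gamma(v_2)$ in $T_H$, which you correctly derive from \Cref{cor:minGamma_exists}, \Cref{lemma:minsarecomparable} (pairwise comparability of the three minimums, hence a chain), and a double application of \Cref{lemma:min_in_other_gamma} to place the largest minimum $m$ in all three neighborhoods; the guard $mv^*$ then covers both edges by direct unpacking of \Cref{def:rguard}. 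Your argument is shorter, fully symmetric in the two edges, and avoids the $<_\ell$ machinery and the two technical appendix lemmas entirely, relying only on the elementary facts of \Cref{sec:partial}. What the paper's version buys is a canonical guard expressed through $\min\Gamma$, in the same normal form as the semi-guards of \Cref{def:semiguard} that drive \Cref{alg:main}; since only the easy direction of this lemma is used later (via \Cref{lemma:Ii_indep} and \Cref{thm:independent}), that extra structure is a matter of consistency rather than necessity, and your proof would serve equally well here.
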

\begin{proof}
    If $h_0v_0\in E(G)$ covers both $h_1v_1$ and $h_2v_2$, then $h_0\in
    \Gamma(v_1)\cap\Gamma(v_2)$ and $v_0\in \Gamma(h_1)\cap\Gamma(h_2)$. Therefore $h_1v_1$ and
    $h_2v_2$ are not $r$-independent if they are both covered by $h_0v_0$.

    Suppose that $h_1v_1$ and $h_2v_2$ are not $r$-independent. If $v_2\prec
    v_1$, then exchange $h_1v_1$ and $h_2v_2$. Thus without loss of generality,
    we may assume that $v_1\preceq v_2$. Since
    $\Gamma(v_1)\cap\Gamma(v_2)\neq\emptyset$,
    \Cref{lemma:linear_order_intersecting_neighborhoods} implies that
    $\min\Gamma(v_2)\in \Gamma(v_1)$.

    Because $v_2\in \Gamma(h_2)$ and
    $h_2\in \Gamma(v_2)$, we have $\min\Gamma(v_2)\le
    h_2$ and $\min\Gamma(h_2)\le v_2$. It follows from
    \Cref{lemma:inheriting_neighbor} that
    $e=\min\Gamma(v_2)\min\Gamma(h_2)$ is an edge in $E(G)$.

    \begin{itemize}
        \item If $h_1\preceq h_2$:
            \Cref{lemma:linear_order_intersecting_neighborhoods} implies that
            $\min\Gamma(h_2)\in \Gamma(h_1)$.  By the previous reasoning, $e$
            covers $h_1v_1$, and $e$ trivially covers $h_2v_2$.

        \item If $h_2\prec h_1$:
            \Cref{lemma:linear_order_intersecting_neighborhoods} implies that
            $\min\Gamma(h_1)\in \Gamma(h_2)$.

            Note, that $\min\Gamma(v_2)\le h_2\prec h_1$
            and $\min\Gamma(h_1)\le v_1\prec v_2$. We have $h_2,h_1\in
            \Gamma(\min\Gamma(h_1))$, $\min\Gamma(v_2),h_1\in\Gamma(v_1)$, and
            $\min\Gamma(v_2),h_2\in \Gamma(v_2)$. Now
            \Cref{lemma:ell_gamma_convex} implies $h_2\in \Gamma(v_1)$. It
            follows that $h_2v_1\in E(G)$ covers both $h_1v_1$ and $h_2v_2$.
    \end{itemize}
\end{proof}
We will construct a covering set of $r$-guards of $G$ gradually in
\Cref{alg:main}, adding or replacing at most one guard in each
iteration. Because we want a kind of monotonicity to hold for the set of covered
edges, we need to put artificial restrictions on what is considered covered by a guard.
\begin{definition}\label{def:semiguard}
    A \emph{semi-guard} is an ordered pair of vertices $(h,v)\in G_H\times G_V$ such that $\min
    \Gamma(v)$ and $\min \Gamma(h)$ are joined by an edge in $G$. The vertices $h$
    and $v$ are called the elements of the semi-guard. As a
    guard, it covers a subset of the edges covered by the edge $\min \Gamma(v)\min
    \Gamma(h)\in E(G)$:
    \begin{equation*}
        (h,v)\text{\ covers }h'v'\in E(G)\quad\Longleftrightarrow\quad
        \left\{\begin{array}{l}
         h'\preceq h,\\  v'\preceq v,\\
        \min\Gamma(h)\in \Gamma(h'),\\
        \min\Gamma(v)\in \Gamma(v').
        \end{array}
        \right.
    \end{equation*}
\end{definition}
\begin{remark}
    Technically, we will not need the requirement $v'\preceq v$ in the proofs,
    because the elements of $G_V$ will be processed in decreasing $\prec$ order
    anyway. However, the symmetric definition explains some of the choices in
    the design of \Cref{alg:main}, therefore we keep it for conceptual and
    didactic reasons. For example, the following lemma provides a nice
    equivalent formulation for the set of edges covered by a semi-guard.
\end{remark}
\begin{lemma}\label{lemma:alter_def_semiguard}
    Let $(h,v)$ be a semi-guard and let $h'v'\in E(G)$. Then
    \begin{equation*}
        (h,v)\text{\normalfont\ covers }h'v'\in E(G)\quad\Longleftrightarrow\quad\left\{
        \begin{array}{l}
            h'\preceq h,\\
            v'\preceq v,\\
            \Gamma(h')\cap\Gamma(h)\neq\emptyset,\\
            \Gamma(v')\cap\Gamma(v)\neq\emptyset.
        \end{array}
        \right.
    \end{equation*}
\end{lemma}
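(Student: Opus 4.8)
The plan is to read off the lemma as a direct comparison between the four conditions of \Cref{def:semiguard} and the three conditions of the present statement. The requirements $h'\le_\ell h$ and $v'\le_\ell v$ appear verbatim in both characterizations, so after discharging them it remains only to show that, under these two order relations, the pair of conditions $\min\Gamma(h)\in\Gamma(h')$ and $\min\Gamma(v)\in\Gamma(v')$ is equivalent to $hv$ being $r$-dependent on $h'v'$. First I would unfold \Cref{def:independent}: the latter means precisely $\Gamma(h)\cap\Gamma(h')\neq\emptyset$ and $\Gamma(v)\cap\Gamma(v')\neq\emptyset$. I would note here that although $r$-(in)dependence is phrased for pairs of edges, the relation depends only on the neighborhoods of the four endpoints, so it remains meaningful for the pair $(h,v)$ even though a semi-guard need not span an edge of $G$.

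Since the resulting two conjuncts split cleanly into an $h$-part and a $v$-part, and since the whole framework is symmetric under exchanging $G_H$ and $G_V$, it suffices to prove the single equivalence
\[
    h'\le_\ell h \ \Longrightarrow\ \big(\min\Gamma(h)\in\Gamma(h') \Longleftrightarrow \Gamma(h)\cap\Gamma(h')\neq\emptyset\big),
\]
and then apply the same statement with the roles of the two trees swapped to handle the $v$-part. This reduces the lemma to a single, symmetric claim.

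The forward implication is immediate: $\min\Gamma(h)$ exists by \Cref{cor:minGamma_exists} and is by definition a member of $\Gamma(h)$, so if it additionally lies in $\Gamma(h')$ it witnesses $\Gamma(h)\cap\Gamma(h')\neq\emptyset$. For the reverse implication I would distinguish the trivial case $h'=h$ (where $\Gamma(h)\cap\Gamma(h')=\Gamma(h)\ni\min\Gamma(h)$) from the case $h'<_\ell h$. In the latter, $\Gamma(h)\cap\Gamma(h')\neq\emptyset$ lets me invoke \Cref{lemma:linear_order_intersecting_neighborhoods} with $s=h'$ and $s'=h$, which yields exactly $\min\Gamma(h)\in\Gamma(h')$.

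I do not expect a genuine obstacle here; the content is essentially a repackaging of \Cref{lemma:linear_order_intersecting_neighborhoods}. The one point deserving explicit care is the remark made above, namely that the $r$-dependence relation of \Cref{def:independent} is a statement about the neighborhoods $\Gamma(h),\Gamma(h'),\Gamma(v),\Gamma(v')$ alone, and therefore applies verbatim to the possibly non-adjacent pair $(h,v)$ that underlies the semi-guard; once this is granted, the equivalence is a two-line consequence of the reduction and the cited lemma.
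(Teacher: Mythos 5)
Your proposal is correct and follows exactly the paper's route: the paper's proof is the one-liner ``Follows from \Cref{lemma:linear_order_intersecting_neighborhoods}'', and your argument is precisely the expansion of that citation --- unfolding \Cref{def:semiguard} and \Cref{def:independent}, splitting into the $h$- and $v$-parts, and applying \Cref{lemma:linear_order_intersecting_neighborhoods} for the nontrivial direction (with the $h'=h$ case handled separately since that lemma assumes strict inequality). Your side remark that $r$-dependence must be read as a condition on neighborhoods so that it applies to the possibly non-adjacent pair $(h,v)$ is a legitimate point of care that the paper leaves implicit.
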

\begin{proof}
    Follows from \Cref{lemma:linear_order_intersecting_neighborhoods}.
\end{proof}

\begin{definition}\label{def:covers}
    A set of semi-guards $W$ is said to \emph{cover} an edge $e\in E(G)$ if $\exists
    (h,v)\in W$ that covers $e$. A set of semi-guards $W$ is said to
    \emph{cover} a subset of edges $F\in E(G)$ if every edge $f\in F$ is covered
    by $W$.
\end{definition}
To prove that a set of $r$-guards (or semi-guards) that covers every
edge of $G$ has the minimum possible size, it is sufficient to construct
a set of $r$-independent edges of the same cardinality. Indeed, this will be
our strategy with \Cref{alg:main,alg:independent}.

\begin{lemma}\label{lemma:semiguard_necessary}
    If $(h,v)$ is a semi-guard, then
    \begin{align*}
        \min\Gamma\Gamma(h)&\le \min\Gamma(v),\\
        \min\Gamma\Gamma(v)&\le \min\Gamma(h).
    \end{align*}
\end{lemma}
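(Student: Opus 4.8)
The plan is to unwind the semi-guard condition into two elementary incidence facts and feed each into the corollaries that relate $\min\Gamma$ to $\Gamma\Gamma$. By \Cref{def:semiguard}, the pair $(h,v)\in G_H\times G_V$ is a semi-guard precisely when $\min\Gamma(v)$ and $\min\Gamma(h)$ are joined by an edge of $G$. Note that $\min\Gamma(v)\in G_H$ while $\min\Gamma(h)\in G_V$, so this is a legitimate bipartite edge, and since $G$ is bipartite it encodes the two symmetric membership statements $\min\Gamma(v)\in\Gamma(\min\Gamma(h))$ and $\min\Gamma(h)\in\Gamma(\min\Gamma(v))$.

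For the first inequality I would apply \Cref{cor:minGamma_le_neighbor} to the vertex $s=\min\Gamma(h)$ together with its neighbor $t=\min\Gamma(v)$, which gives $\min\Gamma(\min\Gamma(h))\le\min\Gamma(v)$; rewriting the left-hand side by \Cref{cor:minGG} turns this into exactly $\min\Gamma\Gamma(h)\le\min\Gamma(v)$. The second inequality is the mirror image: applying \Cref{cor:minGamma_le_neighbor} to $s=\min\Gamma(v)$ with neighbor $\min\Gamma(h)$ yields $\min\Gamma(\min\Gamma(v))\le\min\Gamma(h)$, and \Cref{cor:minGG} again converts the left-hand side to $\min\Gamma\Gamma(v)$, proving $\min\Gamma\Gamma(v)\le\min\Gamma(h)$.

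There is essentially no obstacle here; the proof is a two-line application of the two corollaries. The only point that needs care is the bookkeeping of which of $G_H,G_V$ each iterated $\min$ lands in, so that the edge witnessing the semi-guard is read as a bipartite edge in both directions and \Cref{cor:minGamma_le_neighbor} is invoked with the correct vertex playing the role of $s$. Because \Cref{def:semiguard} is symmetric in its two coordinates, the two inequalities are genuinely dual, and establishing either one immediately gives the other by exchanging the roles of $h$ and $v$.
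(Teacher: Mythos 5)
Your proof is correct and follows essentially the same route as the paper: unwind the semi-guard definition into the two memberships $\min\Gamma(v)\in\Gamma(\min\Gamma(h))$ and $\min\Gamma(h)\in\Gamma(\min\Gamma(v))$, then conclude via \Cref{cor:minGG} (the paper leaves the intermediate appeal to \Cref{cor:minGamma_le_neighbor} implicit, which you spell out). No issues.
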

\begin{proof}
    If $(h,v)$ is a semi-guard, then by definition, $\min\Gamma(v)\in
    \Gamma(\min\Gamma(h))$ and $\min\Gamma(h)\in \Gamma(\min\Gamma(v))$. The
    inequalities now follow from \Cref{cor:minGG}.
\end{proof}

The following lemma shows that beyond the trivial necessary conditions,
verifying a simple condition is sufficient to conclude that a semi-guard covers an edge.
\begin{lemma}\label{lemma:semiguard_covers}
    Let $(h,v)$ be a semi-guard, and let $h'v'\in E(G)$ be such that $v'\preceq
    v$ and $h'\preceq h$. If $h\in \Gamma(v')$ holds, then $(h,v)$ covers $h'v'$.
\end{lemma}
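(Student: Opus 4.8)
The plan is to verify directly the four requirements of \Cref{def:semiguard}. Two of them, namely $h'\le_\ell h$ and $v'\le_\ell v$, are assumed, so only the two membership conditions $\min\Gamma(h)\in\Gamma(h')$ and $\min\Gamma(v)\in\Gamma(v')$ remain. The first thing to observe is that $v'$ is a common neighbor of $h$ and $h'$: indeed $v'\in\Gamma(h)$ because $h\in\Gamma(v')$, and $v'\in\Gamma(h')$ because $h'v'\in E(G)$. Thus $\Gamma(h)\cap\Gamma(h')\neq\emptyset$.

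For the condition $\min\Gamma(h)\in\Gamma(h')$ I would apply \Cref{lemma:linear_order_intersecting_neighborhoods} to the pair $h'\le_\ell h$: since these two vertices share the neighbor $v'$, the lemma yields $\min\Gamma(h)\in\Gamma(h')$ at once (and, as a by-product, $\min\Gamma(h')\le\min\Gamma(h)$). This step is routine.

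The condition $\min\Gamma(v)\in\Gamma(v')$ is the crux. Applying \Cref{lemma:linear_order_intersecting_neighborhoods} to the pair $v'\le_\ell v$ shows that it suffices to produce a single common neighbor of $v$ and $v'$, for then the lemma forces $\min\Gamma(v)$ itself into $\Gamma(v')$. To build one I would use that $(h,v)$ is a semi-guard: writing $h^*=\min\Gamma(v)$ and $v^*=\min\Gamma(h)$, the edge $h^*v^*\in E(G)$ gives $v^*\in\Gamma(h^*)$, while $v\in\Gamma(h^*)$ holds because $h^*=\min\Gamma(v)$. Hence, by the tree-convexity of neighborhoods (\Cref{lemma:gamma_convex}), the whole $T_V$-path $[v^*,v]$ lies in $\Gamma(h^*)$. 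Since $v^*=\min\Gamma(h)\le v'$ (as $v'\in\Gamma(h)$) and $v'\le_\ell v$, the goal reduces to showing that $v'$ actually lies on this path, which then delivers $h^*\in\Gamma(v')$, i.e. the desired membership.

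The hard part, and the step I expect to require the real work, is exactly this last reduction: converting the refined-order relations $v^*\le v'\le_\ell v$ into genuine tree-order ($<$) comparabilities that place $v'$ on $[v^*,v]$. I would attack it through the $\le_\ell$-convexity of \Cref{lemma:ell_gamma_convex}, using $h'$ as the upper witness (it neighbors both $v^*$ and $v'$ by the previous paragraphs and the edge $h'v'$) and the minimality of $\min\Gamma(v)$ together with \Cref{lemma:semiguard_necessary} and \Cref{cor:minGG} to control $\min\Gamma(h^*)=\min\Gamma\Gamma(v)$ relative to $v^*$; the monotonicity lemmas \Cref{lemma:inheriting_neighbor} and \Cref{lemma:minle} would then supply the remaining comparabilities. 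Once $\min\Gamma(v)\in\Gamma(v')$ is secured, both membership conditions hold and $(h,v)$ covers $h'v'$; alternatively one may phrase the conclusion through \Cref{lemma:alter_def_semiguard}, since what has been established is precisely that $hv$ is $r$-dependent on $h'v'$ inside the prescribed $\le_\ell$-quadrant.
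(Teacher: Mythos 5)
Your handling of the first membership condition, $\min\Gamma(h)\in\Gamma(h')$, is exactly the paper's: $v'$ is a common neighbor of $h$ and $h'$ (via $h\in\Gamma(v')$ and $h'v'\in E(G)$), and \Cref{lemma:linear_order_intersecting_neighborhoods} applied to $h'\le_\ell h$ finishes it. The problem is the second condition, $\min\Gamma(v)\in\Gamma(v')$, which you correctly identify as the crux and then do not prove: your final paragraph is a list of lemmas you ``would'' invoke, with no actual derivation of the tree-order facts that would place $v'$ on $[\min\Gamma(h),v]$. As written, the argument is incomplete at exactly the point where the content lies. The paper, by contrast, closes this step in one line by taking $h$ itself as a common neighbor of $v$ and $v'$ and applying \Cref{lemma:linear_order_intersecting_neighborhoods} to $v'\le_\ell v$; that argument uses $h\in\Gamma(v)$, a fact you (reasonably) did not assume, since \Cref{def:semiguard} does not require $hv\in E(G)$.

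You should also be aware that your plan cannot be completed from the stated hypotheses alone, because the implication genuinely fails for bare semi-guards. Take $T_H$ with edges $h_rh_1$ and $h_rh_2$, $T_V$ with edges $v_rv_1$, $v_1v_2$, $v_1v_3$, roots $h_r,v_r$, and $E(G)=\{h_rv_r,\,h_rv_1,\,h_1v_1,\,h_1v_2,\,h_2v_1,\,h_2v_3\}$; all conditions of \Cref{def:basegraph} hold. Then $(h_2,v_2)$ is a semi-guard, since $\min\Gamma(v_2)\min\Gamma(h_2)=h_1v_1\in E(G)$; moreover $\min\Gamma(v_2)=h_1$ and $\min\Gamma(v_3)=h_2$ are incomparable, so $v_2$ and $v_3$ are $<_b$-incomparable and one may choose $v_3<_\ell v_2$. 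The edge $h_2v_3$ then satisfies every hypothesis of the lemma with $h=h'=h_2$, $v=v_2$, $v'=v_3$, yet $\min\Gamma(v_2)=h_1\notin\Gamma(v_3)=\{h_2\}$, so $(h_2,v_2)$ does not cover $h_2v_3$. Consequently any correct proof must use $h\in\Gamma(v)$ or equivalent extra information available at the points where the lemma is invoked (e.g.\ $hv\in E(G)$ in \Cref{lemma:semiguard_hv_edge}, or $\min\Gamma(v)\in\Gamma(s_i)$ from \Cref{lemma:alg_order}); once that is granted, the one-line common-neighbor argument does the job and the machinery of \Cref{lemma:ell_gamma_convex}, \Cref{lemma:semiguard_necessary} and \Cref{cor:minGG} that you propose is not needed.
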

\begin{proof}
    If $h\in \Gamma(v')$, then $v'$ is a common neighbor of $h$ and $h'$. By
    \Cref{lemma:linear_order_intersecting_neighborhoods}, $\min\Gamma(h)\in
    \Gamma(h')$ also holds. Similarly, $h$ is a common neighbor of $v$ and
    $v'$, therefore $\min\Gamma(v)\in \Gamma(v')$.
\end{proof}

\begin{lemma}\label{lemma:semiguard_hv_edge}
	Suppose $hv\in E(G)$. Then $(h,v)$ is a semi-guard, and $(h,v)$ covers
    \begin{equation*}
        \{ h'v \in E(G)\mid h'\preceq h\}.
    \end{equation*}
\end{lemma}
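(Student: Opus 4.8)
The plan is to split the statement into its two assertions and dispatch each by invoking a lemma that is already available. First I would verify the semi-guard condition, namely that $\min\Gamma(v)$ and $\min\Gamma(h)$ are joined by an edge of $G$; once that is in hand, the covering claim drops out of \Cref{lemma:semiguard_covers}. The overall expectation is that both parts amount to checking, rather than discovering, hypotheses.

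For the semi-guard condition I would set $h_0:=\min\Gamma(v)$ and $v_0:=\min\Gamma(h)$, so that the goal becomes $h_0\in\Gamma(v_0)$, equivalently $h_0 v_0\in E(G)$. Since $hv\in E(G)$ we have $h\in\Gamma(v)$ and $v\in\Gamma(h)$, whence \Cref{cor:minGamma_le_neighbor} yields $h_0\le h$ and $v_0\le v$; because $<_\ell$ extends $<$ (by \Cref{lemma:partial_order_b} together with \Cref{def:totalorder}) these upgrade to $h_0\le_\ell h$ and $v_0\le_\ell v$. I would then apply \Cref{lemma:inheriting_neighbor} with the substitution $h_1=h_0$, $h_2=h$, $v_1=v_0$, $v_2=v$. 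Its hypotheses are immediate: $h_0,h\in\Gamma(v)$ holds because $h_0=\min\Gamma(v)\in\Gamma(v)$ and $hv\in E(G)$, while $h\in\Gamma(v_0)$ holds because $v_0=\min\Gamma(h)\in\Gamma(h)$. The conclusion $h_0\in\Gamma(v_0)$ is precisely the edge we need, so $(h,v)$ is a semi-guard.

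For the covering claim I would take an arbitrary $h'v\in E(G)$ with $h'\le_\ell h$ and feed it into \Cref{lemma:semiguard_covers} as the edge $h'v'$ with $v'=v$. The order hypotheses $h'\le_\ell h$ and $v'=v\le_\ell v$ are trivially satisfied, and the remaining requirement $h\in\Gamma(v')=\Gamma(v)$ is exactly $hv\in E(G)$. Thus \Cref{lemma:semiguard_covers} certifies that $(h,v)$ covers $h'v$, which is the assertion for the whole set $\{h'v\in E(G)\mid h'\le_\ell h\}$.

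I do not anticipate a genuine obstacle: the argument is a bookkeeping exercise that threads the correct instances into \Cref{lemma:inheriting_neighbor} and \Cref{lemma:semiguard_covers}. The one point that warrants slight care is ensuring the order hypotheses of \Cref{lemma:inheriting_neighbor} are stated in the refined order $<_\ell$ rather than the coarser $<$; this is legitimate precisely because $<_\ell$ refines $<_b$, which in turn extends $<$, so every inequality obtained from \Cref{cor:minGamma_le_neighbor} transfers without loss. Beyond that, no new geometric or combinatorial input is required.
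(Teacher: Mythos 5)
Your proof is correct and follows essentially the same route as the paper: establish that $\min\Gamma(v)\min\Gamma(h)\in E(G)$, then obtain the covering claim by applying \Cref{lemma:semiguard_covers} with $v'=v$. The only (immaterial) difference is that you certify the semi-guard edge via \Cref{lemma:inheriting_neighbor}, whereas the paper gets it in one step from \Cref{lemma:linear_order_intersecting_neighborhoods} applied to $\min\Gamma(v)\le_\ell h$ with common neighbor $v$; both instantiations are valid.
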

\begin{proof}
	Note that $h\in \Gamma(v)$, so we have
	$\min\Gamma(v)\le h$ and $v\in \Gamma(h)\cap \Gamma(\min\Gamma(v))$.
    By \Cref{lemma:linear_order_intersecting_neighborhoods}, we have
    $\min\Gamma(h)\in \Gamma(\min\Gamma(v))$, so $(h,v)$ is a semi-guard. By taking $v=v'$,
    the rest of the lemma follows from \Cref{lemma:semiguard_covers}.
\end{proof}

\section{Constructing a minimum cover}\label{sec:cover}
We are ready to describe \Cref{alg:main}, which constructs a set of semi-guards
that cover a tree-based bipartite graph $G$.
\begin{definition}
	Let ${(s_i)}_{i=1}^k$ be the sequence of the elements of $G_V$ in decreasing $\prec$
	order, that is,
	\begin{equation*}
		s_k\prec s_{k-1}\prec \cdots \prec s_2\prec s_1.
	\end{equation*}
\end{definition}
Informally, \Cref{alg:main} proceeds in the following manner:
\begin{enumerate}
	\item \Cref{alg:main} ``sweeps'' through the vertices of $G_V$ in $k$ rounds
        in decreasing $\prec$ order (\Cref{def:totalorder}). That is, the main
        loop visits the elements $s_i$ one by one, as $i$ enumerates the
        integers from $1$ to $k$.

    \item In every round, $t_i\in \Gamma(s_i)$ is the maximum $\prec$ order such
        that $t_is_i$ is not covered by $W_{i-1}$ (if such a $t_i$ does not
        exist, then we define $t_i=\emptyset$ and move on to $s_{i+1}$).
        If the current vertex $s_i\in G_V$ is incident with a yet
		uncovered edge $t_{i}s_{i}$ , then \Cref{alg:main} tries to modify an
		existing semi-guard $(h,v)\in W_{i-1}$ to cover $t_{i}s_{i}$, see
		\cref{line:modify}. If a suitable $(h,v)$ does not
		exist, then $W_{i-1}$ is extended with the semi-guard $(t_i,s_i)$ to
        obtain $W_i$, see \cref{line:newguard}.
\end{enumerate}
\begin{algorithm}[ht]
\DontPrintSemicolon{}
\SetAlgoLined{}
\KwData{tree-based bipartite graph $G$ with $k=|G_V|$, and the $\prec$ order on
its vertices}
\KwResult{A set of semi-guards $W_i$ for $i=1,\ldots,k$}

\medskip

let ${(s_i)}_{i=1}^k$ be the elements of $G_V$ in decreasing $\prec$ order\;
let ${(t_i)}_{i=1}^k$ be a sequence of yet undefined elements of $G_H\cup \{\emptyset\}$\;
let ${(W_i)}_{i=1}^k$ be a sequence of yet undefined subsets of $G_H\times G_V$, let $W_0=\emptyset$\;

\medskip

\For{$i=1,\ldots,n$}
{
        $t_i\gets\max_{\prec}\{ t\in \Gamma(s_i) \mid t s_{i}\text{\ is not covered by
        }W_{i-1}\}$\tcp*{$\max_{\prec}\emptyset=\emptyset$}\label{line:ti}
        \uIf{$t_i\neq\emptyset$}
        {
            \uIf{$\exists (h,v)\in W_{i-1}$ such that $h\in \Gamma(s_i)$}
                {\label{line:ifextend}
                    let $(h,v)\in W_{i-1}$ be such that $h\in \Gamma(s_i)$, and
                    $h$ is max.\ in $\prec$ order\;\label{line:max}
                    $W_i\gets W_{i-1}-(h,v)+(t_i,v)$\;\label{line:modify}
                }
            \Else
            {
                $W_i\gets W_{i-1}+(t_i,s_i)$\;\label{line:newguard}
            }
        }
        \Else
        {
            $W_i\gets W_{i-1}$\;
        }
}
\caption{Finding a minimum cardinality set of semi-guards that cover the tree-based bipartite graph $G$}\label{alg:main}
\end{algorithm}
The specifics of implementing \Cref{alg:main} will be discussed in
\Cref{sec:efficient}, where we will prove that \Cref{alg:main} can be
implemented in quadratic time.

For every $i=1,\ldots,k$, both $W_i$ and $t_i$ are assigned a value exactly once
by \Cref{alg:main}, so they can be discussed as mathematical
variables (instead of thinking about them as variables that change their values as the
algorithm proceeds). Let us make a couple of simple observations about \Cref{alg:main}.

\begin{observation}\label{lemma:tisi}
     For any $i=1,\ldots,k$, $t_i\neq\emptyset$ if and only if $t_i\in G_H$ if and only if $t_i\in
	\Gamma(s_i)$.
\end{observation}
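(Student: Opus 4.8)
The plan is to read the three equivalences straight off the way \Cref{alg:main} assigns the variable $t_i$, using only the bipartiteness of $G$ (property~(i) of \Cref{def:basegraph}). First I would extract from the pseudocode the single structural fact that drives everything: at iteration $i$ the algorithm examines $s_i\in G_V$ for a yet-uncovered incident edge, assigning $t_i$ to the horizontal endpoint of such an edge when one exists, and otherwise leaving $t_i$ at its default empty value. In particular $t_i\in\{\emptyset\}\cup\Gamma(s_i)$, and $t_i$ is never given a value in $G_V$ nor any value outside this set. This is the one thing I would verify against \cref{line:modify} and \cref{line:newguard}, since in both branches the edge in play is the same $t_is_i$, whose horizontal endpoint $t_i$ is fixed before the branching.

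Granting that, the equivalence $t_i\ne\emptyset\Longleftrightarrow t_i\in G_H$ is immediate: the only branch producing a non-empty $t_i$ sets it to the horizontal endpoint of an edge incident to $s_i\in G_V$, which is a member of $G_H$, whereas the default value $\emptyset$ lies in neither $G_H$ nor $\Gamma(s_i)$. For $t_i\in G_H\Longleftrightarrow t_i\in\Gamma(s_i)$, the forward implication holds because the only way $t_i$ becomes a horizontal slice is as the endpoint of the uncovered edge $t_is_i\in E(G)$, which gives $t_i\in\Gamma(s_i)$ directly; the reverse implication is pure bipartiteness, since any neighbor of $s_i\in G_V$ must lie in the opposite part $G_H$, so $t_i\in\Gamma(s_i)$ forces $t_i\in G_H$.

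I expect no genuine obstacle here: the statement is a bookkeeping \emph{observation} whose content is entirely definitional, which is precisely why it is phrased as such rather than as a lemma requiring the structural machinery of \Cref{sec:partial,sec:refined}. The only point that demands care is confirming that the case split over the possible assignments of $t_i$ is exhaustive, i.e.\ that \Cref{alg:main} assigns $t_i$ either $\emptyset$ or an element of $\Gamma(s_i)$ and nothing else; once the pseudocode is in hand this reduces to a one-line inspection.
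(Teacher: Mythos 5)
Your proposal is correct and matches the paper's treatment: the statement is a pure bookkeeping observation left unproved in the paper, following immediately from the assignment $t_i\gets\max_{<_\ell}\{ t\in \Gamma(s_i) \mid t s_{i}\text{ is not covered by }W_{i-1}\}$ on \cref{line:ti} (with $\max_{<_\ell}\emptyset=\emptyset$) together with bipartiteness, exactly as you argue. The only cosmetic slip is that you point to \cref{line:modify} and \cref{line:newguard} as the places to check, whereas the assignment of $t_i$ happens once at \cref{line:ti} before any branching — but you note this yourself, so nothing is affected.
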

\begin{observation}\label{lemma:tisiGamma}
    If $t_i\in \Gamma(s_i)$, then $s_i\in \Gamma(t_i)$,
    $\min\Gamma(s_i)\le t_i$, and $\min\Gamma(t_i)\le s_i$.
\end{observation}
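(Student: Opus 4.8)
The plan is to observe that all three conclusions are immediate consequences of the undirected nature of $G$ together with \Cref{cor:minGamma_le_neighbor}, so no new machinery is needed; this is exactly why the statement is recorded as an observation rather than a lemma.

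First I would dispatch the symmetry claim. Recall that $G=(G_H,G_V,E)$ is an undirected bipartite graph, so the neighbor relation encoded by $\Gamma$ is symmetric: the hypothesis $t_i\in\Gamma(s_i)$ is nothing more than the assertion $s_it_i\in E(G)$, which is in turn equivalent to $s_i\in\Gamma(t_i)$. This yields the first conclusion with no further work.

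For the two inequalities I would invoke \Cref{cor:minGamma_le_neighbor} twice. Since $t_i\in\Gamma(s_i)$, the corollary applied with $s=s_i$ and $t=t_i$ gives $\min\Gamma(s_i)\le t_i$ directly. Having just established $s_i\in\Gamma(t_i)$, a second application with $s=t_i$ and $t=s_i$ gives $\min\Gamma(t_i)\le s_i$. Both minimums exist by \Cref{cor:minGamma_exists}, so the symbols are well-defined.

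There is essentially no obstacle here. The only point worth flagging for the reader is that the two comparisons take place on opposite sides of the bipartition: by \Cref{lemma:tisi} the hypothesis forces $t_i\in G_H$ while $s_i\in G_V$, so that $\min\Gamma(s_i),t_i\in G_H$ and $\min\Gamma(t_i),s_i\in G_V$. Thus each invocation of \Cref{cor:minGamma_le_neighbor} compares elements within a single part of $G$, using the order of \Cref{def:order} restricted to that part, and the argument goes through symmetrically in both cases.
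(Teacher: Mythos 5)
Your proof is correct and is exactly the reasoning the paper leaves implicit: the paper states this as an unproved observation, and the intended justification is precisely the symmetry of the edge relation plus two applications of \Cref{cor:minGamma_le_neighbor}. Nothing further is needed.
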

\begin{observation}\label{lemma:ti_eq_h}
    If $(h,v)\in W_i\setminus W_{i-1}$, then $h=t_i$.
\end{observation}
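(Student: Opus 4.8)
The plan is to argue this directly from the structure of \Cref{alg:main}, since the statement is a bookkeeping invariant rather than a substantive combinatorial fact: $W_i$ can differ from $W_{i-1}$ in only one place, and that place always installs $t_i$ as the first coordinate of the new semi-guard. So the whole proof is really a case analysis on which branch of the iteration fires.

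First I would note that, by design (the paper adds and removes at most one guard per iteration), $W_i$ arises from $W_{i-1}$ in exactly one of three ways. Either nothing changes, so $W_i=W_{i-1}$; or a brand-new semi-guard is appended at \cref{line:newguard}; or an existing semi-guard is modified at \cref{line:modify}. In the first case $W_i\setminus W_{i-1}=\emptyset$ and the statement holds vacuously, so only the other two cases need attention, and in each of them $W_i\setminus W_{i-1}$ is a singleton.

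In the append case, \cref{line:newguard} adds precisely the semi-guard $(t_i,s_i)$, whose first coordinate is $t_i$ by definition; since this is the only element of $W_i\setminus W_{i-1}$, we are done. In the modify case, \cref{line:modify} replaces the chosen semi-guard $(h,v)\in W_{i-1}$ by the one whose first coordinate has been reset to $t_i$ (leaving the second coordinate $v$ unchanged, which is legitimate since $s_i$, being the smallest processed so far, already satisfies $s_i\le_\ell v$). Thus once again the single new element of $W_i\setminus W_{i-1}$ has first coordinate $t_i$.

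The only point requiring care is confirming that the modification at \cref{line:modify} genuinely installs $t_i$, and not some other horizontal slice, in the first coordinate. This is forced by the requirement that the modified semi-guard cover the freshly treated edge $t_is_i$: by \Cref{def:semiguard} covering demands $\min\Gamma(h)\in\Gamma(t_i)$, which the algorithm secures in the canonical way by taking $h=t_i$ (and note $\min\Gamma(t_i)\in\Gamma(t_i)$ holds automatically, so $(t_i,v)$ remains a valid semi-guard). I expect this to be the one spot where one must appeal to the exact wording of the algorithm line rather than to its high-level description.
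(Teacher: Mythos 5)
Your proof is correct and matches the paper's (implicit) justification: the paper states this as an immediate observation about \Cref{alg:main}, and your case analysis on the three branches — no change, append $(t_i,s_i)$ at \cref{line:newguard}, or replace $(h,v)$ by $(t_i,v)$ at \cref{line:modify} — is exactly the intended reading, since in both nontrivial branches the sole new pair has first coordinate $t_i$ by the literal text of the algorithm.
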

\begin{observation}\label{lemma:si_le_v}
    If $(h,v)\in W_i$, then $s_i\preceq v$.
\end{observation}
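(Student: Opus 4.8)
The plan is to prove the observation by a straightforward induction on $i$, where the statement serves as its own inductive hypothesis; the single structural ingredient is that \Cref{alg:main} sweeps through $G_V$ in \emph{decreasing} $<_\ell$ order, so that $s_i <_\ell s_{i-1}$ for every $i \ge 2$. Conceptually the claim just records that at iteration $i$ the algorithm has only encountered the vertices $s_1, \dots, s_i$, all of which are $\ge_\ell s_i$, so no semi-guard in $W_i$ can carry a $G_V$-element strictly below $s_i$.

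For the base case, since $W_0 = \emptyset$ the claim is vacuous; equivalently, $W_1$ is either empty or, by \cref{line:newguard}, equal to $\{(t_1,s_1)\}$, whose only $G_V$-element is $s_1$, so $s_1 \le_\ell s_1$. For the inductive step, assume every $(h,v) \in W_{i-1}$ satisfies $s_{i-1} \le_\ell v$, and inspect how $W_i$ is built from $W_{i-1}$. A semi-guard carried over unchanged satisfies $s_i <_\ell s_{i-1} \le_\ell v$, hence $s_i \le_\ell v$. A semi-guard freshly created via \cref{line:newguard} equals $(t_i, s_i)$, whose $G_V$-element is $s_i$, so $s_i \le_\ell s_i$. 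A semi-guard produced by modifying a guard of $W_{i-1}$ via \cref{line:modify} has, by \Cref{lemma:ti_eq_h}, $G_H$-element $t_i$, while its $G_V$-element is one of the already-processed vertices $s_1, \dots, s_i$, each of which is $\ge_\ell s_i$. In every case the $G_V$-element is $\ge_\ell s_i$, which closes the induction.

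The only point requiring care is the modification in \cref{line:modify}: I must confirm that it never assigns to a semi-guard a $G_V$-element lying strictly below $s_i$. This is exactly the invariant above, and it is forced by the sweep, since the unprocessed vertices $s_{i+1}, \dots, s_k$ are not yet available to the algorithm, and a modification alters only the $G_H$-element (setting it to $t_i$) while leaving the $G_V$-element among the already-seen vertices $\{s_1,\dots,s_i\}$. Once the precise wording of \cref{line:modify} is in hand this reduces to a one-line check, after which the observation is immediate.
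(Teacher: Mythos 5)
Your proof is correct, and since the paper states this as an unproved Observation ("simple observations about Algorithm~1"), your induction on $i$ over the three update cases of \Cref{alg:main} — noting that \cref{line:modify} preserves the $G_V$-element and \cref{line:newguard} introduces $s_i$ itself, combined with the decreasing sweep $s_i <_\ell s_{i-1}$ — is exactly the argument the paper leaves implicit. No gaps.
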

\begin{observation}\label{lemma:vvprime_distinct}
    If $(h,v),(h',v')\in W_i$ are distinct, then $v\neq v'$.
\end{observation}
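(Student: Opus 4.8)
The plan is to induct on $i$, maintaining the invariant that the $v$-components of the semi-guards in $W_i$ are pairwise distinct. The base case $W_0=\emptyset$ holds vacuously, so I would focus on the step from $W_{i-1}$ to $W_i$. The starting point is the coarse structure of \Cref{alg:main}: passing from $W_{i-1}$ to $W_i$ it deletes at most one semi-guard and inserts at most one semi-guard, and by \Cref{lemma:ti_eq_h} any inserted semi-guard has first component $t_i$. Hence the whole argument reduces to controlling the \emph{second} component of whatever gets inserted.

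I would then split into the three possibilities at step $i$. If $W_i=W_{i-1}$ there is nothing to prove. If the algorithm modifies an existing semi-guard on \cref{line:modify}, it replaces some $(h,v)\in W_{i-1}$ by $(t_i,v)$, altering only the first element; the multiset of $v$-components of $W_i$ is then identical to that of $W_{i-1}$, so distinctness is inherited. If the algorithm opens a new semi-guard on \cref{line:newguard}, then $W_i=W_{i-1}\cup\{(t_i,s_i)\}$ and the only \emph{new} $v$-component is $s_i$.

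The one case carrying real content is the last, and it is settled by \Cref{lemma:si_le_v}: applied to $W_{i-1}$ it gives $s_{i-1}\le_\ell v'$ for every $(h',v')\in W_{i-1}$, and since the enumeration of $G_V$ is strictly decreasing we have $s_i<_\ell s_{i-1}\le_\ell v'$, whence $s_i\neq v'$. So $s_i$ cannot coincide with any $v$-component already present, and $W_i$ again has pairwise distinct second components. It is convenient to phrase the step uniformly: the multiset of $v$-components changes only by possibly deleting one entry and possibly inserting $s_i$; a deletion can never create a repetition, and the insertion of $s_i$ cannot either because $s_i$ is strictly $<_\ell$-below every entry of $W_{i-1}$.

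The step I expect to demand the most care is pinning down the exact effect of \cref{line:modify} on the second component, namely that the modification leaves $v$ fixed (or, if it happens to reset $v$ to $s_i$, that it simultaneously deletes the old entry). Either way the conclusion is the same, since the only second component that can freshly enter $W_i$ is $s_i$; once this is verified against the definition of \Cref{alg:main}, the strict inequality $s_i<_\ell v'$ furnished by \Cref{lemma:si_le_v} closes the induction.
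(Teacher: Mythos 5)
Your argument is correct and is exactly the justification the paper leaves implicit (the statement appears as an unproved Observation): \cref{line:modify} never alters a second component, so the only second component that can newly enter at step $i$ is $s_i$ via \cref{line:newguard}, and by \Cref{lemma:si_le_v} together with the strictly decreasing enumeration of $G_V$ we have $s_i<_\ell v'$ for every $(h',v')\in W_{i-1}$, so no repetition can arise. Nothing to add.
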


\begin{lemma}\label{lemma:alg_semiguards}
    $W_i$ is a set of semi-guards for any $i=1,\ldots,k$. For any $(h,v)\in
    W_i$, there exists $j\le i$ such that $h=t_j$ and
    \begin{align*}
        \min\Gamma(v) &\preceq t_j\\
        \min\Gamma(v) &\in \Gamma(s_j).
    \end{align*}
\end{lemma}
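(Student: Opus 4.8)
The plan is to induct on $i$, taking $W_0=\emptyset$ (the state before the first step) as the vacuous base case. For the inductive step I would split on the action of \Cref{alg:main} at step $i$: either $W_i=W_{i-1}$ (nothing uncovered at $s_i$), or a fresh semi-guard is appended at \cref{line:newguard}, or an existing semi-guard is modified at \cref{line:modify}. By \Cref{lemma:ti_eq_h} the new element (if any) of $W_i\setminus W_{i-1}$ has first coordinate $t_i$, so in each case there is at most one semi-guard to check from scratch; every semi-guard carried over from $W_{i-1}$ (that is, every element of $W_i\cap W_{i-1}$) keeps both conclusions by the induction hypothesis, reusing its witness $j\le i-1<i$ verbatim.

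For the append case the new semi-guard is $(t_i,s_i)$. Since $t_is_i\in E(G)$ by \Cref{lemma:tisi}, \Cref{lemma:semiguard_hv_edge} shows $(t_i,s_i)$ is a semi-guard, and the witness $j=i$ works: $\min\Gamma(s_i)\in\Gamma(s_i)$ holds by definition of $\min$, while $\min\Gamma(s_i)\le t_i$ from \Cref{lemma:tisiGamma} lifts to $\min\Gamma(s_i)\le_\ell t_i$ because $<_\ell$ extends $<$. This case is routine.

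The substance is the modify case, where the new semi-guard is $(t_i,v)$ obtained from some $(h,v)\in W_{i-1}$ by resetting the first coordinate to $t_i$. Because $(t_i,v)$ is created precisely to cover $t_is_i$, \Cref{def:semiguard} forces $\min\Gamma(v)\in\Gamma(s_i)$, which is already the second witness relation for $j=i$; moreover $s_i\le_\ell v$ by \Cref{lemma:si_le_v}. I would then reduce everything else to the single inequality $\min\Gamma(v)\le_\ell t_i$. Indeed, $s_i$ is a common neighbour of $t_i$ and $\min\Gamma(v)$, so \Cref{lemma:minsarecomparable} makes $\min\Gamma(t_i)$ and $\min\Gamma\Gamma(v)$ comparable; once $\min\Gamma(v)\le_\ell t_i$ is known, \Cref{lemma:linear_order_min_Gamma} gives $\min\Gamma\Gamma(v)=\min\Gamma(\min\Gamma(v))\le\min\Gamma(t_i)$, and \Cref{lemma:min_in_other_gamma} then yields $\min\Gamma(t_i)\in\Gamma(\min\Gamma(v))$, i.e. $(t_i,v)$ is a genuine semi-guard. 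Thus both the semi-guard property and the first witness relation collapse onto $\min\Gamma(v)\le_\ell t_i$.

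The main obstacle is therefore proving $\min\Gamma(v)\le_\ell t_i$, which I would argue by contradiction: suppose $t_i<_\ell\min\Gamma(v)$. The induction hypothesis for $(h,v)$ gives $\min\Gamma(v)\le_\ell h$, hence $t_i\le_\ell h$; together with $s_i\le_\ell v$, $\min\Gamma(v)\in\Gamma(s_i)$, and the semi-guard relation $\min\Gamma(h)\in\Gamma(\min\Gamma(v))$, \Cref{def:semiguard} shows that the \emph{only} condition still needed for $(h,v)$ to already cover $t_is_i$ is $\min\Gamma(h)\in\Gamma(t_i)$ --- and this would contradict the fact that $t_is_i$ is uncovered when step $i$ is processed. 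When $\min\Gamma(h)\le_\ell s_i$ this membership is immediate from \Cref{lemma:inheriting_neighbor} (in its $G_H\leftrightarrow G_V$ symmetric form), applied with the $G_V$-pair $\min\Gamma(h)\le_\ell s_i$ and the $G_H$-pair $t_i\le_\ell\min\Gamma(v)$, since $\min\Gamma(h),s_i\in\Gamma(\min\Gamma(v))$ and $s_i\in\Gamma(t_i)$. The complementary range $s_i<_\ell\min\Gamma(h)$ is the delicate part: there I would first upgrade the common neighbour $\min\Gamma(v)$ through \Cref{lemma:linear_order_intersecting_neighborhoods} to obtain $\min\Gamma\Gamma(h)\in\Gamma(s_i)$, and then invoke the convexity \Cref{lemma:ell_gamma_convex} with middle elements $t_i$ and $\min\Gamma(h)$ to again force $\min\Gamma(h)\in\Gamma(t_i)$. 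Choosing the third pair of witnesses for that application correctly and verifying all six incidences is where the argument is most technical, and it is the step I expect to require the most care.
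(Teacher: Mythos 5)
Your overall structure (induction on $i$, the three cases, \Cref{lemma:semiguard_hv_edge} for the append case) matches the paper, and your contradiction argument for $\min\Gamma(v)\le_\ell t_i$ in the modify case is a legitimate variant of the paper's (the paper instead contradicts $t_i\le_\ell t_j$ via \Cref{lemma:semiguard_covers}). The genuine gap is at the start of that case: you obtain $\min\Gamma(v)\in\Gamma(s_i)$ by asserting that ``$(t_i,v)$ is created precisely to cover $t_i s_i$, so \Cref{def:semiguard} forces it''. This is circular. \Cref{alg:main} performs the replacement at \cref{line:modify} unconditionally; that $(t_i,v)$ actually covers $t_i s_i$ is a theorem (\Cref{lemma:Wi_covers_si}) whose proof goes through \Cref{lemma:semiguard_covers}, which presupposes that $(t_i,v)$ is a semi-guard --- exactly what you are trying to establish here. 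Moreover, $\min\Gamma(v)\in\Gamma(s_i)$ is itself one of the two conclusions you must prove for the witness $j=i$, and your contradiction argument also consumes it (the symmetric application of \Cref{lemma:inheriting_neighbor} needs $s_i\in\Gamma(\min\Gamma(v))$). The paper closes this hole by applying the inductive hypothesis to $(h,v)\in W_{i-1}$: there is $j'\le i-1$ with $h=t_{j'}$, $\min\Gamma(v)\le_\ell t_{j'}$ and $\min\Gamma(v)\in\Gamma(s_{j'})$; combining this with $t_{j'}=h\in\Gamma(s_i)$ (guaranteed by \cref{line:max}) and $s_i\le_\ell s_{j'}$, \Cref{lemma:inheriting_neighbor} yields $\min\Gamma(v)\in\Gamma(s_i)$. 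With that step inserted, the rest of your argument goes through.

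Two smaller remarks. The subcase you flag as the delicate one, $s_i<_\ell\min\Gamma(h)$, is vacuous: the algorithm only modifies a pair with $h\in\Gamma(s_i)$, so $s_i\in\Gamma(h)$, and \Cref{cor:minGamma_le_neighbor} gives $\min\Gamma(h)\le s_i$, hence $\min\Gamma(h)\le_\ell s_i$; your first subcase is the whole case, and no appeal to \Cref{lemma:ell_gamma_convex} is needed. Also, your chain \Cref{lemma:minsarecomparable}, \Cref{lemma:linear_order_min_Gamma}, \Cref{lemma:min_in_other_gamma} for deducing the semi-guard property from $\min\Gamma(v)\le_\ell t_i$ is exactly \Cref{lemma:linear_order_intersecting_neighborhoods} applied to the common neighbour $s_i$, which is what the paper cites directly.
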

\begin{proof}
    By induction on $i$. Note, that $W_0=\emptyset$, set during initialization.
    \begin{itemize}
        \item If $W_i=W_{i-1}$, then then the induction step is trivial.
        \item If $W_i=W_{i-1}+(t_i,s_i)$, then $(t_i,s_i)$ is a semi-guard by
            \Cref{lemma:semiguard_hv_edge}. Since $t_i\in \Gamma(s_i)$, we have
            $\min\Gamma(s_i)\le t_i$. Recall that $\prec$ extends $<$.
        \item If $W_i=W_{i-1}-(h,v)+(t_i,v)$: then $h\in \Gamma(s_i)$.
            Moreover, we apply the inductive hypothesis to
            $(h,v)\in W_{i-1}$. That is, there exists $j\le i-1$ such that
            $h=t_j$ and $\min\Gamma(v) \preceq t_j$ and $\min\Gamma(v) \in
            \Gamma(s_j)$. Also, $t_j\in \Gamma(s_i)$.

            Observe, that $\min\Gamma(v)\preceq t_j$ and $s_i\preceq s_j$. In
            addition, $\min\Gamma(v),t_j\in \Gamma(s_j)$ and $t_j\in
            \Gamma(s_i)$. In this case, \Cref{lemma:inheriting_neighbor} implies
            that $\min\Gamma(v)\in \Gamma(s_i)$.

            It follows that $s_i$ is a common neighbor of $t_i$ and $\min\Gamma(v)$.
            If $t_i\preceq t_j$, then by \Cref{lemma:semiguard_covers}, $(t_j,v)\in W_{i-1}$ covers $t_i s_i$,
            which is a contradiction. Therefore we must have $t_j\prec t_i$. By
            induction, $\min\Gamma(v)\preceq t_j$. By transitivity and
            \Cref{lemma:linear_order_intersecting_neighborhoods}, we have
            \begin{align*}
                &\min\Gamma(v)\preceq t_i,\\
                &\min\Gamma(t_i)\in \Gamma(\min\Gamma(v)),
            \end{align*}
            which proves that $(t_i,v)$ is a semi-guard. The inductive
            hypothesis holds for $(h,v)\in W_i$, since we already showed that $\min\Gamma(v)\in
            \Gamma(s_i)$.
    \end{itemize}
\end{proof}

\begin{lemma}\label{lemma:alg_order}
    If $W_i=W_{i-1}-(h,v)+(t_i,v)$, then $h\prec t_i$, $s_i\in\Gamma(h)$,
    $\min\Gamma(t_i)\in\Gamma(h)$, and $\min\Gamma(v)\in \Gamma(s_i)$.
\end{lemma}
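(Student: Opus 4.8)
The plan is to notice that three of the four conclusions are already essentially contained in the proof of \Cref{lemma:alg_semiguards}, and the remaining one is a short consequence. First I would read off what the modify branch of \Cref{alg:main} guarantees: reaching \cref{line:modify} means the test on \cref{line:ifextend} succeeded, and the selection on \cref{line:max} fixes a pair $(h,v)\in W_{i-1}$ with $h\in\Gamma(s_i)$. Since adjacency in the bipartite graph $G$ is symmetric, this immediately gives $s_i\in\Gamma(h)$, which is the second conclusion. I would also record the two facts the rest of the argument relies on, both guaranteed by $t_i\neq\emptyset$: the edge $t_is_i$ lies in $E(G)$, and it is not covered by $W_{i-1}$.

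For the fourth conclusion $\min\Gamma(v)\in\Gamma(s_i)$, I would apply \Cref{lemma:alg_semiguards} to $(h,v)\in W_{i-1}$ to get an index $j\le i-1$ with $h=t_j$, $\min\Gamma(v)\le_\ell t_j$, and $\min\Gamma(v)\in\Gamma(s_j)$. Since $h=t_j\in\Gamma(s_i)$ and $j<i$ forces $s_i\le_\ell s_j$, the hypotheses of \Cref{lemma:inheriting_neighbor} hold with $\min\Gamma(v),t_j$ playing the role of the two horizontal slices and $s_i,s_j$ the two vertical ones, and it yields $\min\Gamma(v)\in\Gamma(s_i)$.

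The first conclusion $h<_\ell t_i$ is where the ``uncovered'' hypothesis does the real work, so this is the step I expect to require the most care. By the previous paragraph $s_i$ is a common neighbor of $t_i$ and of $\min\Gamma(v)$, and $s_i\le_\ell v$ follows from \Cref{lemma:si_le_v} applied to $(h,v)\in W_{i-1}$ together with $s_i<_\ell s_{i-1}$. Were $t_i\le_\ell t_j=h$, then \Cref{lemma:semiguard_covers} would show that the semi-guard $(t_j,v)\in W_{i-1}$ covers $t_is_i$, contradicting that $t_is_i$ is uncovered by $W_{i-1}$; hence $h=t_j<_\ell t_i$. Finally the third conclusion comes for free: having established $h<_\ell t_i$ and that $s_i\in\Gamma(h)\cap\Gamma(t_i)$, this intersection is nonempty, so \Cref{lemma:linear_order_intersecting_neighborhoods} delivers $\min\Gamma(t_i)\in\Gamma(h)$.
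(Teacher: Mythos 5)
Your proposal is correct and follows essentially the same route as the paper: the paper's proof simply cites that $h<_\ell t_i$ and $\min\Gamma(v)\in\Gamma(s_i)$ are established explicitly inside the proof of \Cref{lemma:alg_semiguards} (via the same applications of \Cref{lemma:inheriting_neighbor} and \Cref{lemma:semiguard_covers} you give), notes $s_i\in\Gamma(h)$ from the algorithm's condition, and derives $\min\Gamma(t_i)\in\Gamma(h)$ from \Cref{lemma:linear_order_intersecting_neighborhoods} exactly as you do. Your write-up just unpacks those citations in full, which is fine.
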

\begin{proof}
    The relations $h\prec t_i$ and $\min\Gamma(v)\in \Gamma(s_i)$ are shown
    explicitly in the proof of \Cref{lemma:alg_semiguards}, and the algorithm
    requires $s_i\in \Gamma(h)$. Because $s_i\in \Gamma(t_i)$,
    \Cref{lemma:linear_order_intersecting_neighborhoods} implies
    $\min\Gamma(t_i)\in\Gamma(h)$.
\end{proof}

Note, that the vertical components of semi-guards are preserved.
\begin{corollary}\label{lemma:vertical_preserved}
    If $(h,v)\in W_j$, then for any $i\ge j$ (and $i\le k$) there $\exists m\in
    \mathbb{N}$ such that $(t_m,v)\in W_i$ and $h\preceq t_m$, where $j\le m\le
    i$.
\end{corollary}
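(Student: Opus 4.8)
The plan is to induct on $i$, starting from the base $i=j$, and at each stage to track the \emph{single} semi-guard of $W_i$ whose vertical component equals $v$. Existence of such a semi-guard at step $j$ is exactly the hypothesis $(h,v)\in W_j$, and uniqueness at every later step is guaranteed by \Cref{lemma:vvprime_distinct}, which forbids two distinct members of $W_i$ from sharing a vertical component. So it suffices to follow what \Cref{alg:main} does to the unique carrier of the label $v$. For the base case $i=j$ the tracked semi-guard is $(h,v)$ itself; by \Cref{lemma:alg_semiguards} we may write $h=t_m$ for an appropriate step $m\le j$, and the required $h\le_\ell t_m$ holds with equality (moreover, if $(h,v)\in W_j\setminus W_{j-1}$ then $h=t_j$ by \Cref{lemma:ti_eq_h}, forcing $m=j$).

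For the inductive step I would examine the three ways $W_i$ is obtained from $W_{i-1}$. First, if $W_i=W_{i-1}$, nothing changes and the tracked semi-guard $(t_m,v)$ persists verbatim. Second, if $W_i=W_{i-1}+(t_i,s_i)$, I would argue the freshly added label $s_i$ cannot equal $v$: since $(t_m,v)\in W_{i-1}$, \Cref{lemma:si_le_v} gives $s_{i-1}\le_\ell v$, while the defining property $s_i<_\ell s_{i-1}$ of the processing order yields $s_i<_\ell v$; hence the addition introduces a genuinely new vertical component and leaves $(t_m,v)$ untouched. Third, if $W_i=W_{i-1}-(h',v')+(t_i,v')$, then either $v'\neq v$, in which case the tracked semi-guard again survives unchanged, or $v'=v$, in which case the tracked semi-guard is precisely the one being modified and is replaced by $(t_i,v)$; here \Cref{lemma:alg_order} supplies $t_m=h'<_\ell t_i$, so combined with the inductive bound $h\le_\ell t_m$ we obtain $h\le_\ell t_i$, and the new horizontal partner is $t_i$, installed at step $i$.

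In every case the vertical component $v$ survives and its horizontal partner is non-decreasing in $<_\ell$ (strictly increasing, by \Cref{lemma:alg_order}, exactly when it is modified), which already yields the persistence and the inequality $h\le_\ell t_m$. The main (though routine) obstacle I expect is purely the index bookkeeping: the index $m$ reported in the statement is the step at which the current horizontal partner was installed, equal to $i$ in the matching-modify case and inherited otherwise, and one must check it stays inside the advertised window $j\le m\le i$ — the delicate point being that the lower bound $j\le m$ is genuine only because every update after step $j$ resets the index to the current step, while an inherited partner from step $j$ is handled by the base case. No geometric input is required beyond the already-established lemmas; the whole argument is a case analysis driven by the update rule of \Cref{alg:main}.
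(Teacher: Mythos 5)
Your induction on $i$ — tracking the unique semi-guard of $W_i$ with vertical component $v$ (uniqueness by \Cref{lemma:vvprime_distinct}), ruling out a collision with a freshly added guard via \Cref{lemma:si_le_v} and the decreasing processing order, and using \Cref{lemma:alg_order} for the strict increase $h'<_\ell t_i$ whenever the tracked guard is modified — is exactly the argument the paper leaves implicit by stating this as a corollary of \Cref{lemma:alg_order}. The only rough edge is the lower bound $j\le m$, which, as you half-observe, holds literally only when the tracked horizontal partner is (re)installed at or after step $j$; this is the situation in which the corollary is actually invoked (e.g.\ in \Cref{lemma:ti_cover_preserved}, applied to $(t_j,s_j)\in W_j$), so your treatment matches the paper's intent.
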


From \Cref{lemma:semiguard_covers} it easily follows that $W_j$ covers $t s_j$
for any $t\in \Gamma(s_j)$.
\begin{lemma}\label{lemma:Wi_covers_si}
	Let $1\le j\le k$ and $t\in \Gamma(s_j)$. Then $W_j$ covers $t s_j$.
\end{lemma}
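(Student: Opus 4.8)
The plan is to fix $t\in\Gamma(s_j)$ and split into two regimes according to how $t$ compares to $t_j$, the $<_\ell$-maximal neighbor of $s_j$ whose incident edge was still uncovered when step $j$ began (see \cref{line:ti}). The two regimes are $t\le_\ell t_j$, handled directly by the semi-guard the algorithm installs at step $j$, and $t>_\ell t_j$ (including the degenerate case $t_j=\emptyset$), where coverage is inherited from $W_{j-1}$.

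First I would dispatch the case $t\le_\ell t_j$, which forces $t_j\neq\emptyset$. Whether the algorithm extended $W_{j-1}$ by $(t_j,s_j)$ or modified an existing semi-guard into $(t_j,v)$, the set $W_j$ contains a semi-guard whose horizontal element is exactly $t_j$; call it $(t_j,v)$. By \Cref{lemma:si_le_v} we have $s_j\le_\ell v$, and by hypothesis $t\le_\ell t_j$ with $t_j\in\Gamma(s_j)$. These are precisely the hypotheses of \Cref{lemma:semiguard_covers} with $h=t_j$, $h'=t$, $v'=s_j$, which yields that $(t_j,v)$ covers $ts_j$.

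The second regime is $t>_\ell t_j$ together with the degenerate case $t_j=\emptyset$. Here the defining maximality of $t_j$ does the work: since $t\in\Gamma(s_j)$ but $t$ does not belong to the set of uncovered neighbors whose maximum is $t_j$, the edge $ts_j$ must already be covered by $W_{j-1}$. It remains to argue that this coverage survives the passage to $W_j$. In the extension case we have $W_{j-1}\subseteq W_j$, so there is nothing to check.

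The main obstacle I expect is the modify case $W_j=W_{j-1}-(h,v)+(t_j,v)$, where a semi-guard is deleted and coverage could in principle be destroyed. The resolution is \Cref{lemma:alg_order}, which guarantees $h<_\ell t_j$. Consequently, by the coverage condition in \Cref{def:semiguard}, the removed semi-guard $(h,v)$ can only cover edges $t's_j$ with $t'\le_\ell h<_\ell t_j$; in particular it cannot cover $ts_j$ when $t>_\ell t_j$. Hence whichever semi-guard of $W_{j-1}$ covered $ts_j$ was not the deleted one, so it still lies in $W_j$, and $ts_j$ remains covered. Combining the two regimes establishes the claim for every $t\in\Gamma(s_j)$.
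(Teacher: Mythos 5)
Your proof is correct and follows essentially the same route as the paper: split on whether $t\le_\ell t_j$ (handled by \Cref{lemma:semiguard_covers} applied to the freshly installed semi-guard with horizontal element $t_j$) or $t_j<_\ell t$ (handled by the maximality of $t_j$ in \cref{line:ti} plus the observation that the deleted semi-guard $(h,v)$ satisfies $h<_\ell t_j$ and so could not have been the one covering $ts_j$). If anything, your write-up is slightly more complete than the paper's, which treats only the modification case explicitly and leaves the extension case and the justification that $(h,v)$ fails to cover $ts_j$ implicit.
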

\begin{proof}
	We distinguish three cases.
    \begin{itemize}
        \item If $t_j=\emptyset$, then $W_{j-1}$ covers $t s_j$. By
            \Cref{alg:main}, we have $W_j=W_{j-1}$.

        \item If $t_j\neq\emptyset$ and $W_j=W_{j-1}-(h,v)+(t_j,v)$ and
            $t_j\prec t$, then $(h,v)\in W_{j-1}$ does not cover $t s_j$ (since
            $h\prec t$ by~\Cref{lemma:alg_order}), but $W_{j-1}$ does
            cover $t s_j$ (otherwise $t_j$ was not maximal in $\prec$ order);
            it follows that $W_j$ also covers $t s_j$;

        \item If $t_j\neq\emptyset$ and $W_j=W_{j-1}-(h,v)+(t_j,v)$ and
            $t\preceq t_j$, then by \Cref{lemma:semiguard_covers}, $(t_j,v)$
            covers $t s_j$, because $t_j\in \Gamma(s_j)$ by \Cref{lemma:tisi}.
    \end{itemize}
    Thus we have shown that $W_j$ covers any $t s_j\in E(G)$.
\end{proof}
It follows that if $\exists (h,v)\in W_{i-1}$ such that $h\in \Gamma(s_i)$, then
$h\neq t_i$, otherwise $t_i s_i$ would not be covered by $W_i$. We also need to
make sure that $ts_j\in E(G)$ remains covered by $W_i$ for any $i>j$.
\begin{lemma}\label{lemma:cover_preserved}
	Let $1\le j\le i\le k$ and $t\in \Gamma(s_j)$. Then $W_i$ covers $t s_j$.
	Furthermore, if $j<i$ and $W_{i-1}\setminus W_i$ covers $t s_j$, then
	$W_i\setminus W_{i-1}$ also covers $t s_j$.
\end{lemma}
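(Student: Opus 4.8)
The plan is to prove Lemma~\ref{lemma:cover_preserved} by induction on $i$, leveraging the three-case analysis that governs the behavior of \Cref{alg:main} at each step. The base case $i=j$ is exactly \Cref{lemma:Wi_covers_si}, which guarantees that $W_j$ covers every edge $t s_j$ with $t\in \Gamma(s_j)$. For the inductive step, I would assume $W_{i-1}$ covers $t s_j$ and show that passing from $W_{i-1}$ to $W_i$ does not destroy this coverage. The only interesting case is when \Cref{alg:main} modifies an existing semi-guard, i.e., $W_i=W_{i-1}-(h,v)+(t_i,v)$; when $W_i=W_{i-1}$ or $W_i=W_{i-1}+(t_i,s_i)$, coverage is trivially preserved since guards are only added.

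So suppose $W_i=W_{i-1}-(h,v)+(t_i,v)$, and that $(h,v)$ is the unique semi-guard removed. If $t s_j$ is covered by some semi-guard in $W_{i-1}$ other than $(h,v)$, that semi-guard survives into $W_i$ and we are done. The crux is therefore the situation where $(h,v)$ is the \emph{only} element of $W_{i-1}$ covering $t s_j$; this is precisely the scenario flagged by the second sentence of the statement, where $W_{i-1}\setminus W_i=\{(h,v)\}$ covers $t s_j$. Here I must show that the replacement semi-guard $(t_i,v)\in W_i\setminus W_{i-1}$ also covers $t s_j$. Since $(h,v)$ covers $t s_j$, by \Cref{def:semiguard} we have $t\le_\ell h$, $s_j\le_\ell v$, $\min\Gamma(h)\in \Gamma(t)$, and $\min\Gamma(v)\in \Gamma(s_j)$. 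By \Cref{lemma:alg_order}, $h<_\ell t_i$, so $t\le_\ell h<_\ell t_i$ gives the first defining condition $t\le_\ell t_i$ for $(t_i,v)$ to cover $t s_j$; the condition $s_j\le_\ell v$ is inherited unchanged since the vertical component $v$ is preserved, and $\min\Gamma(v)\in \Gamma(s_j)$ is likewise inherited.

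What remains is the fourth defining condition, namely $\min\Gamma(t_i)\in \Gamma(t)$. This is where the structural lemmas do the heavy lifting. We know $\min\Gamma(h)\in \Gamma(t)$, so $t$ is a common neighbor of $h$ and $\min\Gamma(h)$; combined with $h<_\ell t_i$ and \Cref{lemma:alg_order}'s relation $\min\Gamma(t_i)\in \Gamma(h)$, I expect to chain these through \Cref{lemma:inheriting_neighbor} (or \Cref{lemma:linear_order_intersecting_neighborhoods}) to conclude $\min\Gamma(t_i)\in \Gamma(t)$. Concretely, since $\min\Gamma(t_i)\le_\ell \min\Gamma(h)$ by \Cref{lemma:minle} applied to $h<_\ell t_i$ (after translating through $\min\Gamma$ via \Cref{cor:minGG2}), and since $t$ already sees $\min\Gamma(h)$, a convexity/inheritance argument places $t$ in the neighborhood of the smaller vertex $\min\Gamma(t_i)$ as well.

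I expect the main obstacle to be verifying this last neighborhood-inheritance step cleanly: the comparabilities among $t$, $h$, $t_i$, $\min\Gamma(h)$, and $\min\Gamma(t_i)$ must be lined up so that the hypotheses of \Cref{lemma:inheriting_neighbor} are met exactly. In particular, one must be careful that the $<_\ell$-orderings of the horizontal vertices ($t\le_\ell h<_\ell t_i$) and the induced orderings on their $\min\Gamma$ images are compatible with the asymmetric formulation of \Cref{lemma:inheriting_neighbor}. Once that bookkeeping is settled, the \emph{furthermore} clause of the statement follows immediately, since we have shown precisely that the new semi-guard $(t_i,v)\in W_i\setminus W_{i-1}$ absorbs the coverage obligation of the removed $(h,v)\in W_{i-1}\setminus W_i$.
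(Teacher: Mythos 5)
Your overall induction scheme and the reduction to a single obligation --- showing that the replacement semi-guard $(t_i,v)$ covers $t s_j$ whenever the removed $(h,v)$ did --- match the paper, and you correctly isolate the one nontrivial condition, $\min\Gamma(t_i)\in\Gamma(t)$. However, your argument for that condition has a genuine gap, and its central inequality is backwards. You claim $\min\Gamma(t_i)\le_\ell\min\Gamma(h)$ from $h<_\ell t_i$; in fact the opposite holds: since $\min\Gamma(t_i)\in\Gamma(h)$ (\Cref{lemma:alg_order}), \Cref{cor:minGamma_le_neighbor} gives $\min\Gamma(h)\le\min\Gamma(t_i)$, and \Cref{lemma:linear_order_min_Gamma} applied to $h<_\ell t_i$ points the same way. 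This is fatal to your plan, because \Cref{lemma:inheriting_neighbor} and \Cref{lemma:ell_gamma_convex} only transfer neighborhood membership \emph{downward} to $<_\ell$-smaller vertices: knowing $t\in\Gamma(\min\Gamma(h))$ lets you push $t$ into neighborhoods of vertices $\le_\ell\min\Gamma(h)$, but $\min\Gamma(t_i)$ sits on the wrong side, so no instantiation of those lemmas with the data you list ($t\le_\ell h<_\ell t_i$, $\min\Gamma(h)\in\Gamma(t)$, $\min\Gamma(t_i)\in\Gamma(h)$) yields $\min\Gamma(t_i)\in\Gamma(t)$.

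The missing idea is to exhibit a \emph{common neighbor of $t$ and $t_i$}, after which \Cref{lemma:linear_order_intersecting_neighborhoods} applied to $t<_\ell t_i$ immediately gives $\min\Gamma(t_i)\in\Gamma(t)$. The paper's proof shows that $s_i$ is such a common neighbor, i.e.\ $t\in\Gamma(s_i)$, and this is where the algorithm-specific facts you never invoke become essential: by \Cref{lemma:alg_order}, both $h$ and $\min\Gamma(v)$ lie in $\Gamma(s_i)$ (as well as in $\Gamma(s_j)$ together with $t$). One then splits on the position of $t$ relative to $\min\Gamma(v)$: if $t\le_\ell\min\Gamma(v)$, \Cref{lemma:inheriting_neighbor} with $s_i\le_\ell s_j$ gives $t\in\Gamma(s_i)$; if $\min\Gamma(v)\le_\ell t\le_\ell h$, one applies \Cref{lemma:ell_gamma_convex} to the triples $\min\Gamma(v),t,h$ and $\min\Gamma(h),s_i,s_j$. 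Without this intermediate step (or some substitute producing a common neighbor of $t$ and $t_i$), the proof does not close.
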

\begin{proof}
	We prove the complete statement by induction on $i$.
	\Cref{lemma:Wi_covers_si} proves that the inductive
	hypothesis holds for $i=j$. Let $i>j$ and suppose that $(h,v)\in W_{i-1}$
	covers $t s_j$. If $(h,v)\in W_i$, then the induction step is trivial.

    \medskip

    Suppose from now on, that $W_i=W_{i-1}-(h,v)+(t_i,v)$. As $(h,v)$ is a
    semi-guard, we have $\min\Gamma(v)\in \Gamma(\min\Gamma(h))$.
    Because $(h,v)$ covers $t s_j$, we know that $t\preceq h$, $s_j\preceq v$,
    and $\min\Gamma(h)\in \Gamma(t)$, $\min\Gamma(v)\in \Gamma(s_j)$.
	Recall \Cref{lemma:alg_order}, and note that $\min\Gamma(v),h\in\Gamma(s_i)$
    and $t\preceq h\prec t_i$.
	We will show that $s_i\in \Gamma(t)$. We have two cases.
	\begin{itemize}
		\item If $t\preceq\min\Gamma(v)$: then we apply
			\Cref{lemma:inheriting_neighbor} as follows. We have
			$t\preceq\min\Gamma(v)$ and $s_i\preceq s_j$. Also,
			$\min\Gamma(v)\in\Gamma(s_i),\Gamma(s_j)$ and $t\in \Gamma(s_j)$.
			Thus $t\in \Gamma(s_i)$ follows.

		\item If $\min\Gamma(v)\preceq t\preceq h$:  Note, that
            $\min\Gamma(h)\preceq s_i\preceq s_j$. Observe, that
            \Cref{lemma:ell_gamma_convex} applies: indeed, $t,h\in
            \Gamma(\min\Gamma(h))$, $\min\Gamma(v),h\in \Gamma(s_i)$, and
            $\min\Gamma(v),t\in \Gamma(s_j)$. Therefore $t\in \Gamma(s_i)$ holds.
	\end{itemize}
	In any case, $s_i$ is a common neighbor of $t$ and $t_i$, therefore by
	\Cref{lemma:linear_order_intersecting_neighborhoods} we have $\min\Gamma(t_i)\in
    \Gamma(t)$. Since $t\prec t_i$, this concludes the proof that $(t_i,v)$
    covers $t s_j$.
\end{proof}

\begin{theorem}\label{thm:Wk_covers_G}
    The semi-guard set $W_k$ covers $E(G)$. Consequently, the set of $r$-guards
    \begin{equation}\label{eq:rguards_of_Wk}
            \left\{\min\Gamma(v)\min\Gamma(h) \Big.\mid (h,v)\in W_k\right\}
    \end{equation}
    also covers $E(G)$.
\end{theorem}
\begin{proof}
    \Cref{lemma:cover_preserved} proves that the
    semi-guard set $W_k$ covers $E(G)$. By~\Cref{def:semiguard}, anything covered
    by the semi-guard $(h,v)$ is covered by the $r$-guard
    $\min\Gamma(v)\min\Gamma(h)$.
\end{proof}

\section{Constructing a maximal independent set}\label{sec:indep}
In this section we will show that \Cref{alg:independent} selects a set $I_1$ of
pairwise $r$-independent edges (\Cref{def:independent}) of the tree-based
bipartite graph $G$ such that $|I_1|=|W_k|$.
\begin{algorithm}[ht]
\SetAlgoLined{}
\KwData{tree-based bipartite graph $G$, an $\prec$ order on its vertices, and
    the sequences ${(W_i)}_{i=1}^k$ and ${(t_i)}_{i=1}^k$ produced by \Cref{alg:main}}
\KwResult{A set $I_i$ of pairwise $r$-independent edges of $G$ for every
$i=1,\ldots,k$}

\medskip

let $I_{k+1}=\emptyset$\;
\For{$i=k,\ldots,1$}{\label{line:indep_for}
	\uIf{$t_i\neq\emptyset$ and $t_i s_i$ is $r$-independent from every edge in
    $I_{i+1}$}
    {\label{line:tisi_indep}
        \uIf{$W_i=W_{i-1}+(t_i,s_i)$}
        {
            let $I_i\gets I_{i+1}+t_i s_i$\;
        }
        \ElseIf{$W_i=W_{i-1}-(h,v)+(t_i,v)$}
        {
            \uIf{$\exists t_n s_n\in I_{i+1}$ such that $(h,v)$ covers $t_n s_n$}
            {
                let $I_i\gets  I_{i+1}+t_i s_i$\;
            }
            \uElseIf{$\exists t_n s_n\in I_{i+1}$ such that $h\in \Gamma(s_n)$ and
            $h\prec t_n$}
            {
                let $I_i\gets I_{i+1}+t_i s_i$\;
            }
            \Else
            {
                let $I_i\gets I_{i+1}$\;
            }
        }
	}
	\Else
    {
		let $I_i\gets I_{i+1}$\;
	}
}
\Return{$I_1$}\;
\caption{Finding a maximum size set of $r$-independent edges of $G$}\label{alg:independent}
\end{algorithm}
\begin{lemma}\label{lemma:ti_cover_preserved}
	Let $1\le j\le i\le k$. If $t_j\in G_H$, then $W_i$
	covers $t_j s\in E(G)$ for any $s\in \Gamma(t_j)$.
\end{lemma}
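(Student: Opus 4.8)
The plan is to fix the index $j$ with $t_j\in G_H$ and to split the neighbours $s\in\Gamma(t_j)$ according to whether $s_j<_\ell s$ or $s\le_\ell s_j$. First I would record the facts that drive everything. Since $t_j\in G_H$, \Cref{lemma:tisi} gives $s_j\in\Gamma(t_j)$, and \Cref{alg:main} inserts into $W_j$ a semi-guard of the form $(t_j,v_j)$, where $v_j=s_j$ if a new guard is created and $v_j$ is the preserved vertical component if a guard is modified. In either case \Cref{lemma:alg_semiguards} yields $\min\Gamma(v_j)\le_\ell t_j$ and $\min\Gamma(v_j)\in\Gamma(s_j)$, and \Cref{lemma:si_le_v} gives $s_j\le_\ell v_j$.

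For $s\in\Gamma(t_j)$ with $s_j<_\ell s$, write $s=s_m$; then $m<j$ and $t_j\in\Gamma(s_m)$, so \Cref{lemma:cover_preserved} applied to the vertex $s_m$ shows that $W_i$ covers $t_j s_m$ for every $i\ge m$, in particular for all $i\ge j$. These neighbours therefore need no further attention.

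The real content is the case $s\le_\ell s_j$, which includes as-yet-unprocessed neighbours $s=s_m$ with $m>j$, for which \Cref{lemma:cover_preserved} is unavailable when $i<m$. I would track the unique semi-guard of $W_i$ whose vertical component equals $v_j$ --- it exists for all $i\ge j$ by \Cref{lemma:vertical_preserved} and is unique by \Cref{lemma:vvprime_distinct} --- and denote its horizontal component by $g_i$, so that $g_j=t_j$ and $t_j\le_\ell g_i$. The aim is the invariant $\min\Gamma(g_i)\in\Gamma(t_j)$. Granting it, $(g_i,v_j)$ covers every such $t_j s$: the conditions $t_j\le_\ell g_i$, $s\le_\ell s_j\le_\ell v_j$, and $\min\Gamma(g_i)\in\Gamma(t_j)$ of \Cref{def:semiguard} are immediate, and the remaining condition $\min\Gamma(v_j)\in\Gamma(s)$ follows from \Cref{lemma:inheriting_neighbor} applied with $h_1=\min\Gamma(v_j)$, $h_2=t_j$, $v_1=s$, $v_2=s_j$.

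The hard part will be propagating the invariant $\min\Gamma(g_i)\in\Gamma(t_j)$ through a modification step $W_i=W_{i-1}-(g_{i-1},v_j)+(t_i,v_j)$, where $g_i=t_i$ (if the $v_j$-guard is untouched the invariant is inherited verbatim). Here \Cref{lemma:alg_order} supplies $\min\Gamma(t_i)\in\Gamma(g_{i-1})$, $s_i\in\Gamma(g_{i-1})$, and $\min\Gamma(v_j)\in\Gamma(s_i)$, and \Cref{lemma:semiguard_necessary} gives $\min\Gamma\Gamma(v_j)\le\min\Gamma(t_i)$. I would then feed the $G_H\leftrightarrow G_V$-symmetric form of \Cref{lemma:ell_gamma_convex} the chains $\min\Gamma(g_{i-1})\le_\ell\min\Gamma(t_i)\le_\ell s_j$ and $\min\Gamma(v_j)\le_\ell t_j\le_\ell g_{i-1}$; the one non-obvious incidence, $\min\Gamma(t_i)\in\Gamma(\min\Gamma(v_j))$, follows from $\min\Gamma\Gamma(v_j)\le\min\Gamma(t_i)\le s_i$ together with $s_i\in\Gamma(\min\Gamma(v_j))$ by the convexity of neighbourhoods (\Cref{lemma:gamma_convex}), while the remaining five incidences are exactly the facts already listed. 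The lemma then outputs $\min\Gamma(t_i)\in\Gamma(t_j)$, closing the induction. I expect the alignment of these three $\le_\ell$-chains and the verification of the six incidences to be the only genuinely fiddly step; everything else reduces to assembling the facts of the first paragraph.
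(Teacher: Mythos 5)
Your proof is correct, and its skeleton --- split on $s_j<_\ell s$ versus $s\le_\ell s_j$, then follow the semi-guard created or modified at step $j$ along its preserved vertical component --- is the same as the paper's. The difference lies in how the crucial incidence $\min\Gamma(g_i)\in\Gamma(t_j)$ is obtained. The paper gets it almost for free: by \Cref{lemma:Wi_covers_si} the step-$j$ guard covers the edge $t_j s_j$, the ``furthermore'' clause of \Cref{lemma:cover_preserved} propagates this specific coverage to the descendant guard $(t_m,v_j)\in W_i$ supplied by \Cref{lemma:vertical_preserved}, and unwinding \Cref{def:semiguard} for the covered edge $t_j s_j$ then yields $t_j\le_\ell t_m$ and $\min\Gamma(t_m)\in\Gamma(t_j)$ in one line. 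You instead re-prove this invariant by a fresh induction over the modification steps, via \Cref{lemma:ell_gamma_convex}, \Cref{lemma:semiguard_necessary}, and the convexity of neighborhoods; the two $\le_\ell$-chains and six incidences you list do check out (the ordering $\min\Gamma(g_{i-1})\le_\ell\min\Gamma(t_i)$ needs \Cref{lemma:minsarecomparable} and \Cref{lemma:linear_order_min_Gamma} via the common neighbor $s_i$, but that is routine), so the induction closes --- it just duplicates work that \Cref{lemma:cover_preserved} has already done. On the other hand, your write-up is more careful on one point: the paper's text treats the step-$j$ guard as $(t_j,s_j)$, which is literally true only when \cref{line:newguard} fires, whereas you explicitly allow the inherited vertical component $v_j\neq s_j$ and supply the extra application of \Cref{lemma:inheriting_neighbor} needed to conclude $\min\Gamma(v_j)\in\Gamma(s)$ in that case.
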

\begin{proof}
	If $s_j\prec s$, then by \Cref{lemma:Wi_covers_si,lemma:cover_preserved},
	$W_i$ covers $t_j s$.

	Suppose from now on that $s\preceq s_j$. Note, that $t_j\in \Gamma(s)\cap
	\Gamma(s_j)$. By \Cref{lemma:linear_order_intersecting_neighborhoods}, we
	get $\min\Gamma(s_j)\in \Gamma(s)$, thus $(t_j,s_j)\in W_j$ covers $t_j s$.
	If $i=j$, then the proof is complete.

	Suppose also, that $i>j$. By \Cref{lemma:vertical_preserved}, there exists
	some $(t_m,s_j)\in W_i$ where $j\le m\le i$. By
	\Cref{lemma:Wi_covers_si,lemma:cover_preserved}, $(t_m,s_j)$ covers $t_j
	s_j$. Therefore, $\min\Gamma(t_m)\in \Gamma(t_m)$ and $t_j\preceq t_m$. In
	addition, we have shown previously that $\min\Gamma(s_j)\in \Gamma(s)$,
	therefore $(t_m,s_j)$ covers $t_j s$.
\end{proof}

\begin{corollary}\label{lemma:ti_distinct}
	If $j<i$ and $t_j,t_i\in G_H$, then $t_j\neq t_i$.
\end{corollary}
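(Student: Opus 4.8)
The plan is to argue by contradiction, leveraging the coverage-persistence guarantee of \Cref{lemma:ti_cover_preserved}. Suppose that $j<i$, that both $t_j,t_i\in G_H$, and that $t_j=t_i$; call this common slice $h$. The goal is to show that under this assumption $t_i s_i=h s_i$ would already be covered by $W_{i-1}$, which contradicts the very choice of $t_i$ in \cref{line:ti}: since $t_i\neq\emptyset$, the edge $t_i s_i$ is by construction \emph{not} covered by $W_{i-1}$.

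To reach that contradiction, first observe by \Cref{lemma:tisi} that $t_i\in \Gamma(s_i)$, and hence by \Cref{lemma:tisiGamma} that $s_i\in \Gamma(t_i)=\Gamma(h)=\Gamma(t_j)$. So $s_i$ is a neighbor of $t_j$. The key indexing point is to apply \Cref{lemma:ti_cover_preserved} not at index $i$ but at index $i-1$: because $j<i$ we have $1\le j\le i-1\le k$, and since $t_j\in G_H$ the lemma asserts that $W_{i-1}$ covers $t_j s$ for \emph{every} $s\in \Gamma(t_j)$. Taking $s=s_i\in \Gamma(t_j)$ yields that $W_{i-1}$ covers $t_j s_i = t_i s_i$, the promised contradiction. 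Hence $t_j\neq t_i$.

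The only step requiring care is the index bookkeeping: one must invoke \Cref{lemma:ti_cover_preserved} with its second argument equal to $i-1$ (so that the inequalities $j\le i-1\le k$ follow from $j<i$) rather than $i$, and then match its conclusion against the defining property of $t_i$ in \cref{line:ti}. Once the two are aligned, no further geometric or order-theoretic reasoning is needed and the corollary drops out immediately. I do not expect a genuine obstacle here, since \Cref{lemma:ti_cover_preserved} has already done the heavy lifting of showing that an edge incident to a chosen $t_j$ remains covered at every later stage.
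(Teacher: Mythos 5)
Your proof is correct and follows essentially the same route as the paper: both apply \Cref{lemma:ti_cover_preserved} with indices $j\le i-1$ to conclude that $W_{i-1}$ covers $t_j s_i$, and then contradict the defining property of $t_i$ in \cref{line:ti}. The extra care you take with the index bookkeeping ($i-1$ rather than $i$) is exactly the point the paper's terser wording leaves implicit.
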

\begin{proof}
	By \Cref{lemma:ti_cover_preserved}, $W_{i-1}$ covers $t_j s$ for any $s\in
	\Gamma(t_j)$. If the edge $t_j
	s_i\in E(G)$ exists, then substituting $s=s_i$ implies that $t_j s_i$ is
	also covered by $W_{i-1}$.
\end{proof}

\begin{corollary}\label{lemma:indices}
    If $(t_m,s_j)\in W_i$, then $1\le j\le m\le i$.
\end{corollary}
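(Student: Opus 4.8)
The plan is to read off the two indices $j$ and $m$ directly from the life history of the semi-guard $(t_m,s_j)$ under \Cref{alg:main}, exploiting the asymmetry between its two coordinates: vertical components are frozen at creation, whereas horizontal components are overwritten by $t_i$ each time a guard is touched at step $i$. Concretely, $j$ should record the step at which the guard was born, $m$ the step at which its horizontal component was last set, and $i$ merely certifies that the guard is still alive; so the target inequalities $1\le j\le m\le i$ simply assert ``birth $\le$ last modification $\le$ present''.

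First I would pin down the vertical coordinate. The remark preceding \Cref{lemma:vertical_preserved} records that the vertical component of a semi-guard is never altered by \cref{line:modify}, and the only other way a semi-guard enters the sets $W_i$ is via \cref{line:newguard}, which inserts $(t_{i'},s_{i'})$ at step $i'$. Tracing $(t_m,s_j)\in W_i$ backwards therefore reaches a creation step $i'$ with $s_{i'}=s_j$; since the $s$'s are the distinct elements of $G_V$, this forces $i'=j$, so the guard was born at step $j$ as $(t_j,s_j)$. Hence $(t_j,s_j)\in W_j$ and, as it survives until step $i$, we get $j\le i$. Trivially $j\ge 1$.

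Next I would recover $m$ together with the remaining inequalities through \Cref{lemma:vertical_preserved}. Applying it to $(t_j,s_j)\in W_j$ with target index $i\ge j$ yields an index $m'$ with $j\le m'\le i$ and $(t_{m'},s_j)\in W_i$. By \Cref{lemma:vvprime_distinct}, $W_i$ contains at most one semi-guard with vertical component $s_j$, so $(t_{m'},s_j)=(t_m,s_j)$ and therefore $t_{m'}=t_m$. Both are elements of $G_H$ (they are the horizontal coordinates of semi-guards), so \Cref{lemma:ti_distinct} gives $m=m'$, whence $j\le m\le i$, completing the argument.

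The only delicate point — and the step I expect to require the most care — is the bookkeeping that justifies treating the subscripts on $t_m$ and $s_j$ as genuine step numbers rather than mere labels. This is exactly what the distinctness results supply: \Cref{lemma:vvprime_distinct} makes the vertical coordinate identify a guard uniquely inside a given $W_i$, and \Cref{lemma:ti_distinct} makes the nonempty $t$'s injective in their index, so that $t_m=t_{m'}$ really does entail $m=m'$. Everything else is a direct appeal to \Cref{lemma:vertical_preserved} and to the shape of the two insertion lines \cref{line:newguard,line:modify} (together with \Cref{lemma:ti_eq_h}, which guarantees that a modified guard's horizontal component equals $t_i$).
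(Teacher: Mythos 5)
Your argument is correct and follows essentially the same route as the paper, whose proof is just the one-line remark that the claim ``follows easily by checking \Cref{alg:main}'' once \Cref{lemma:ti_distinct} is known. You have simply made the bookkeeping explicit — birth step $j$ pinned down by the distinctness of the $s_i$'s and the invariance of the vertical coordinate, then \Cref{lemma:vertical_preserved}, \Cref{lemma:vvprime_distinct} and \Cref{lemma:ti_distinct} to identify $m$ — which is exactly the inspection the paper leaves to the reader.
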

\begin{proof}
    Knowing \Cref{lemma:ti_distinct}, the statement follows easily by checking
    \Cref{alg:main}.
\end{proof}

\begin{lemma}\label{lemma:guards_distinct}
	If $(h,v),(h',v')\in W_i$ are distinct, then $h\neq h'$ and $v\neq v'$.
\end{lemma}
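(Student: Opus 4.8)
The claim splits into two parts, of which one is already in hand: the inequality $v \neq v'$ is exactly the content of Observation \ref{lemma:vvprime_distinct}, so no work is needed there. The substance of the lemma is the inequality $h \neq h'$, and the plan is to argue it by contradiction, exploiting the fact that the horizontal component of every semi-guard records the iteration in which that component was last set.

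First I would invoke Corollary \ref{lemma:indices} to write the two semi-guards in the canonical form $(h,v) = (t_m, s_j)$ and $(h',v') = (t_{m'}, s_{j'})$ with $1 \le j \le m \le i$ and $1 \le j' \le m' \le i$. Since semi-guards lie in $G_H \times G_V$, both horizontal components are in $G_H$, so assuming $h = h'$ means $t_m = t_{m'}$ with $t_m, t_{m'} \in G_H$. By Corollary \ref{lemma:ti_distinct}, distinct indices produce distinct non-empty elements of $G_H$, so $t_m = t_{m'}$ forces $m = m'$. The task thereby reduces to ruling out two distinct semi-guards of $W_i$ sharing the same horizontal index $m$.

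Here lies the crux of the argument, and I expect it to be the main obstacle: one must establish the invariant that the value $t_m$ is created exactly once, during iteration $m$, and that at most one semi-guard acquires $t_m$ as its horizontal component at that step. This is verified by inspecting \Cref{alg:main}. Before iteration $m$ the symbol $t_m$ is unassigned, and every semi-guard in $W_{m-1}$ carries some $t_{m''}$ with $m'' < m$, which by Corollary \ref{lemma:ti_distinct} differs from $t_m$; hence none of them carries $t_m$. In iteration $m$ itself, a semi-guard with horizontal component $t_m$ is born in precisely one place: either the new guard $(t_m, s_m)$ added on \cref{line:newguard}, or the single replacement $(h,v) \mapsto (t_m, v)$ performed on \cref{line:modify}. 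In every later iteration $i' > m$, only $t_{i'} \neq t_m$ is introduced, and the replacement step on \cref{line:modify} strictly overwrites a horizontal slot (removing $(h,v)$ as it inserts $(t_{i'}, v)$) rather than duplicating a horizontal component.

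It follows that $W_i$ contains at most one semi-guard whose horizontal component equals $t_m$, which contradicts the supposed presence of both $(t_m, s_j)$ and $(t_m, s_{j'})$ with $s_j \neq s_{j'}$. Therefore $h \neq h'$; combined with $v \neq v'$ from Observation \ref{lemma:vvprime_distinct}, the lemma follows. The only delicate point is pinning down the ``one horizontal component per iteration, created once'' invariant against the precise semantics of the add-versus-replace branches; once that bookkeeping is secured, the rest is immediate.
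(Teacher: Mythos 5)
Your proposal is correct and follows essentially the same route as the paper: $v\neq v'$ is delegated to Observation~\ref{lemma:vvprime_distinct}, and $h\neq h'$ comes from identifying each horizontal component with a $t$-value created at a single iteration and invoking Corollary~\ref{lemma:ti_distinct}. The paper phrases this more compactly (distinct guards enter $W$ at distinct iterations $j\neq m$, so $h=t_j\neq t_m=h'$ by Observation~\ref{lemma:ti_eq_h}), but your ``created once, never duplicated'' invariant is the same argument unwound.
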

\begin{proof}
    By \Cref{lemma:vvprime_distinct}, we have $v\neq v'$.
    There exist two distinct integers $j,m\le i$ such that $(h,v)\in
    W_j\setminus W_{j-1}$ and $(h',v')\in W_m\setminus W_{m-1}$.
    From \Cref{lemma:ti_eq_h} it follows that $h=t_j$ and $h'=t_m$, therefore
    $h\neq h'$.
\end{proof}

Next, we show that \Cref{alg:independent} greedily selects a decreasing sequence
of $r$-independent edges of $G$. Notice, that the index $i$ is enumerated in
decreasing-order by the \textbf{for}-loop on \cref{line:indep_for}.

\begin{definition}\label{def:Ii}
    Let $(I_i)_{i=1}^{k+1}$ be a sequence of subsets of $E(G)$, defined by
    \Cref{alg:independent}.
\end{definition}
Notice, that $I_{i+1}\subseteq I_i$, therefore
\begin{equation}\label{eq:Idecreasing}
    I_1\supseteq I_2\supseteq I_3\supseteq \ldots \supseteq I_k\supseteq
    I_{k+1}=\emptyset.
\end{equation}

\begin{lemma}\label{lemma:Ii_elements}
    For any $i=1,\ldots,k$, we have $I_i\subseteq \{ t_m s_m \mid i\le m\le k,\ t_m\in G_H\}$.
\end{lemma}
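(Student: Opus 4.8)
The plan is to prove this by downward induction on $i$, mirroring the structure of \Cref{alg:independent}. The essential observation is that the algorithm processes indices from $k$ down to $1$, and at each step $i$ the only edge it can possibly adjoin to the running set is $t_i s_i$; consequently every element of $I_i$ must have been emitted at some step $m$ with $m\ge i$.

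First I would dispatch the base of the induction. Since $I_{k+1}=\emptyset$, it is vacuously contained in $\{t_m s_m\mid k+1\le m\le k,\ t_m\in G_H\}=\emptyset$, so taking $I_{i+1}$ with $i=k$ gives the hypothesis for free. For the inductive step I assume $I_{i+1}\subseteq\{t_m s_m\mid i+1\le m\le k,\ t_m\in G_H\}$ and argue that the same containment holds for $I_i$ with the index range widened to $i\le m\le k$.

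The heart of the argument is a case analysis over the branches of \Cref{alg:independent}. Every branch assigns to $I_i$ either the value $I_{i+1}$ or the value $\{t_i s_i\}\cup I_{i+1}$. In the former case the claim is immediate from the inductive hypothesis, using the trivial inclusion $\{t_m s_m\mid i+1\le m\le k\}\subseteq\{t_m s_m\mid i\le m\le k\}$. In the latter case, the branch is reached only when $t_i\neq\emptyset$, which by \Cref{lemma:tisi} forces $t_i\in G_H$; hence the newly adjoined edge $t_i s_i$ already lies in the target set with $m=i$, and combining this with the hypothesis yields $I_i\subseteq\{t_m s_m\mid i\le m\le k,\ t_m\in G_H\}$.

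The only point requiring care is the exhaustiveness of the branch analysis: one must check that no branch of \Cref{alg:independent} ever introduces an edge other than $t_i s_i$, and that $t_i s_i$ is admitted solely when $t_i$ is a genuine horizontal slice. There is no substantive geometric or combinatorial obstacle here—the statement is a bookkeeping fact about which edges the greedy selection can produce, and it follows directly from the form of the assignments in the algorithm together with \eqref{eq:Idecreasing}.
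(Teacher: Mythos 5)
Your proof is correct and follows essentially the same route as the paper, which simply observes that $I_i\setminus I_{i+1}\subseteq\{t_i s_i\}$ and notes the claim is then trivial. Your expanded induction, including the check via \Cref{lemma:tisi} that $t_i\neq\emptyset$ forces $t_i\in G_H$, is just a more explicit write-up of the same bookkeeping argument.
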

\begin{proof}
    Trivial, since $I_i\setminus I_{i+1}\subseteq \{ t_i s_i \}$.
\end{proof}

\begin{lemma}\label{lemma:Ii_indep}
    For any $i=1,\ldots,k+1$, the elements of $I_i$ are pairwise
    $r$-independent.
\end{lemma}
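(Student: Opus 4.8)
The plan is to prove this by downward induction on $i$, from $i=k+1$ to $i=1$, tracking the invariant ``$I_i$ is pairwise $r$-independent'' and reading the inductive step directly off the structure of \Cref{alg:independent}. The base case is immediate: by \Cref{def:Ii} we have $I_{k+1}=\emptyset$, and the empty set has no pairs to check, so it is vacuously pairwise $r$-independent.

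For the inductive step I would assume $I_{i+1}$ is pairwise $r$-independent and inspect every branch of the loop body for index $i$. The key observation is that in all cases the algorithm sets $I_i$ to be either $I_{i+1}$ itself or $\{t_i s_i\}\cup I_{i+1}$, and it never removes elements (consistent with the chain \eqref{eq:Idecreasing}). In the first case the inductive hypothesis transfers verbatim. In the second case the assignment occurs only inside the outer \texttt{\textbackslash uIf}, whose guard explicitly requires that $t_i\neq\emptyset$ and that $t_i s_i$ be $r$-independent from \emph{every} edge of $I_{i+1}$; this is true no matter which of the nested sub-branches (the $W_i=W_{i-1}+(t_i,s_i)$ branch or the $W_i=W_{i-1}-(h,v)+(t_i,v)$ branch) is actually taken, since those sub-conditions only further restrict when the edge is added, never relaxing the guard.

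It then suffices to verify pairwise $r$-independence for the two kinds of pairs in $\{t_i s_i\}\cup I_{i+1}$: a pair contained in $I_{i+1}$ is $r$-independent by the inductive hypothesis, and a pair of the form $\{t_i s_i, e\}$ with $e\in I_{i+1}$ is $r$-independent by the loop guard. Before invoking this I would record that $t_i s_i$ is a genuine edge of $G$: since $t_i\neq\emptyset$ in this branch, \Cref{lemma:tisi} gives $t_i\in\Gamma(s_i)$, i.e.\ $t_i s_i\in E(G)$, so it is a legitimate candidate for the $r$-independence relation of \Cref{def:independent}.

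I do not expect a genuine obstacle here, since the statement is essentially an invariant maintained by construction; the only thing one must be careful about is to confirm that the $r$-independence test in the guard is against all of $I_{i+1}$ (the set that is about to be augmented) rather than some stale version, and that no branch ever deletes an edge, so that newly verified independence is never silently broken by a later modification at the same index. Both facts are visible directly from \Cref{alg:independent} and from the monotonicity \eqref{eq:Idecreasing}, which completes the induction.
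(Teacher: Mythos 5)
Your proof is correct and is exactly the argument the paper intends: the paper's own proof reads ``Trivial, by induction on decreasing $i$,'' and your write-up simply spells out that induction, with the base case $I_{k+1}=\emptyset$ and the observation that the loop guard of \Cref{alg:independent} certifies $r$-independence of $t_i s_i$ against all of $I_{i+1}$ whenever an edge is added.
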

\begin{proof}
    By induction on decreasing $i$. The statement trivially holds for
    $I_{k+1}=\emptyset$. By~\cref{line:tisi_indep}, $t_i s_i$ can be an element of $I_i$ only if $t_i
    s_i$ is $r$-independent from every edge in $I_{i+1}$.
\end{proof}

The next lemma is the basis for the lower bound on the cardinality of $I_1$. To
keep the exposition of the main argument of this section concise, the proof of
\Cref{lemma:independence_induction} is postponed to \Cref{sec:postponed_indep}.
\begin{lemma}\label{lemma:independence_induction}
    For any $i=1,\ldots,k$ and $(h,v)\in W_i$ at least one of the following holds:
    \begin{enumerate}
        \item $\exists t_n s_n\in I_{i+1}$ such that $(h,v)$ covers $t_n s_n$, or
        \item $\exists t_n s_n\in I_{i+1}$ such that $h\in \Gamma(s_n)$ and $h\prec t_n$, or
        \item there exists $q\in \mathbb{N}$ such that $1\le q\le i$, $t_q
            s_q\in I_1$, and $(t_q,v)\in W_q$.
    \end{enumerate}
\end{lemma}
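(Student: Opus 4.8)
The plan is to argue by induction on increasing $i$, the base case $W_0=\emptyset$ being vacuous. Two standing observations drive the argument. First, by~\eqref{eq:Idecreasing} and \Cref{lemma:Ii_elements} we have $I_i\setminus I_{i+1}\subseteq\{t_is_i\}$, so $t_is_i\in I_1$ if and only if \Cref{alg:independent} adds it at step $i$; passing from the inductive hypothesis (phrased with $I_i$) to the goal (phrased with $I_{i+1}$) can therefore only lose the single witness $t_is_i$. Second, the two tests that \Cref{alg:independent} performs at a modification step are, verbatim, the first two alternatives of the lemma applied to the removed semi-guard. The proof then splits on how $W_i$ arises from $W_{i-1}$ (unchanged, new guard, or modification, via \Cref{lemma:ti_eq_h}) and on whether $t_is_i$ enters $I_1$.

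I would first dispose of the inherited semi-guards $(h,v)\in W_{i-1}\cap W_i$. Whenever step $i$ adds nothing to the independent set, so that $I_i=I_{i+1}$, the inductive hypothesis for $(h,v)$ is literally the desired conclusion. The only delicate inherited case is $t_is_i\in I_i\setminus I_{i+1}$ with the inductive witness equal to $t_is_i$. Here the first alternative is impossible: it would assert that $(h,v)\in W_{i-1}$ covers $t_is_i$, contradicting that $t_is_i$ is, by the definition of $t_i$ in \Cref{alg:main}, uncovered by $W_{i-1}$. This leaves the second alternative with witness $t_is_i$, i.e.\ $h\in\Gamma(s_i)$ and $h<_\ell t_i$, which is the transfer situation discussed below.

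For the semi-guard created or modified at step $i$, namely $(t_i,v)\in W_i\setminus W_{i-1}$ with $v=s_i$ or $v=v_0$, the clean case is $t_is_i\in I_1$: then the third alternative holds with $q=i$, since $(t_i,v)\in W_i$. If $t_is_i\notin I_1$ at a new-guard step, then $t_is_i$ is $r$-dependent on some $t_ns_n\in I_{i+1}$; using $n\ge i+1$ (hence $s_n<_\ell s_i$) and splitting on whether $t_n\le_\ell t_i$, I would derive the first alternative from \Cref{lemma:alter_def_semiguard}, or the second alternative by extracting $t_i\in\Gamma(s_n)$ through \Cref{lemma:linear_order_intersecting_neighborhoods} and \Cref{lemma:inheriting_neighbor}. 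At a modification step with $t_is_i\notin I_1$, the inductive hypothesis for the removed guard $(h_0,v_0)$ is either the third alternative (which transfers directly to $(t_i,v_0)$ with the same $q$), or the $r$-dependent case just handled, or else it degenerates to its second alternative witnessed only by $t_is_i$, which is again the transfer situation.

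The hard part will be this recurring transfer situation: a semi-guard with horizontal component $h<_\ell t_i$ and $h\in\Gamma(s_i)$ whose only inductive witness is $t_is_i\notin I_{i+1}$, for which I must manufacture a surviving witness in $I_{i+1}$ (or else establish the third alternative). The tools are exactly \Cref{lemma:inheriting_neighbor} and \Cref{lemma:ell_gamma_convex}, which let one push a neighbor or covering relation down the $<_\ell$-order among $h$, $t_i$, and the maximal guard $h^\ast\in\Gamma(s_i)$ selected by \Cref{alg:main}; verifying their hypotheses amounts to checking the $\le_\ell$-comparabilities among the $t$'s, the $\min\Gamma(\cdot)$'s, and the $s$'s, in the spirit of \Cref{lemma:alg_semiguards} and \Cref{lemma:cover_preserved}. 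I expect the principal difficulty to be bookkeeping — tracking which of the three alternatives survives each downgrade from $I_i$ to $I_{i+1}$ — rather than any single geometric estimate.
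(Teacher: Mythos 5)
Your skeleton matches the paper's: induction on increasing $i$, a case split on whether step $i$ of \Cref{alg:main} leaves $W_{i-1}$ unchanged, adds $(t_i,s_i)$, or performs a modification, together with the two driving observations that $I_i\setminus I_{i+1}\subseteq\{t_is_i\}$ and that the two tests in \Cref{alg:independent} are verbatim the first two alternatives of the lemma applied to the removed semi-guard. The cases you actually argue (inherited guards when $I_i=I_{i+1}$, the new or modified guard when $t_is_i\in I_1$ via the third alternative with $q=i$, the new-guard step with $t_is_i$ $r$-dependent on some $t_ns_n\in I_{i+1}$ split on $t_n\le_\ell t_i$ versus $t_i<_\ell t_n$, and the direct transfer of the index $q$ from the removed guard) are handled the same way in the paper.

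There is, however, a genuine gap: the case that consumes most of the paper's appendix proof --- a modification step $W_i=W_{i-1}-(t_m,s_j)+(t_i,s_j)$ in which $t_is_i$ is $r$-dependent on some $t_ps_p\in I_{i+1}$ --- is not carried out, and your plan mislocates the difficulty. You claim this reduces to ``the $r$-dependent case just handled,'' but that argument produces a witness for the edge $t_is_i$, whereas the guard actually placed in $W_i$ is $(t_i,s_j)$ with $s_j\neq s_i$ in general: for $(t_i,s_j)$ to cover $t_ps_p$ one needs $\min\Gamma(s_j)\in\Gamma(s_p)$, which the $r$-dependence of $t_is_i$ and $t_ps_p$ does not supply. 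The paper must split on the position of $t_p$ relative to $\min\Gamma(s_j)$, and in the sub-case $t_p<_\ell\min\Gamma(s_j)$ it threads the removed guard's inductive witness $t_ns_n$ through to $(t_i,s_j)$ by a further three-way analysis (on $\min\Gamma(t_n)$ versus $\min\Gamma(t_i)$ and on the comparability of $s_n$ and $s_i$), at one point using the pairwise $r$-independence of $I_{i+1}$ to force $p=n$ and reach a contradiction. Moreover, your ``recurring transfer situation'' --- a guard whose only inductive witness is $t_is_i\notin I_{i+1}$ --- cannot occur in any case with $t_is_i\notin I_1$, since then $t_is_i\notin I_i$ and it was never available as a witness for the step-$(i-1)$ hypothesis; the real work is manufacturing a witness for the freshly modified guard, not rescuing a lost one. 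Deferring all of this to ``bookkeeping'' with the named lemmas leaves the heart of the lemma unproved.
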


\begin{lemma}\label{lemma:I1_ge_Wk}
    The cardinalities of $I_1$ and $W_k$ satisfy $|I_1|\ge |W_k|$.
\end{lemma}
\begin{proof}
    Let $i=k$ and notice that in this special case
    \Cref{lemma:independence_induction} states the following: for any $(h,v)\in
    W_k$ there exists $q_{(h,v)}$ such that $t_{q_{(h,v)}}s_{q_{(h,v)}}\in I_1$
    and $(t_{q_{(h,v)}},v)\in W_{q_{(h,v)}}$.

    By~\Cref{lemma:ti_distinct}, it is sufficient to show that for any two
    $(h,v),(h',v')\in W_k$, we have $q_{(h,v)}\neq q_{(h',v')}$.
    Suppose for a contradiction, that $q=q_{(h,v)}=q_{(h',v')}$. Then $t_q=h=h'$
    and $(t_q,v),(t_q,v')\in W_{q}$. By~\Cref{lemma:guards_distinct}, we must
    have $(h,v)=(h',v')$, which is a contradiction.
\end{proof}

In other words, the following min-max theorem holds.
\begin{theorem}\label{thm:independent}
    The minimum number of $r$-guards required to cover $G$ is equal to the maximum
    number of pairwise $r$-independent edges of $G$.
	In particular, $W_k$ is a
    minimum cardinality set of semi-guards covering $G$, and $I_1$ is a
    maximum size set of $r$-independent edges of $G$, such that $|W_k|=|I_1|$.
\end{theorem}
\begin{proof}
    By \Cref{lemma:indep}, the minimum number of $r$-guards covering $G$ is at
    least the maximum number of $r$-independent edges of $G$. Recall
    \Cref{thm:Wk_covers_G} and \Cref{lemma:Ii_indep}. The set of $r$-guards
    on~\eqref{eq:rguards_of_Wk} covers $E(G)$, and the elements of $I_1$ are
    pairwise $r$-independent. Hence, we must have $|I_1|\le |W_k|$.

    Finally, by~\Cref{lemma:I1_ge_Wk}, the equality $|I_1|=|W_k|$ must hold. In
    other words, $I_1$ and $W_k$ are witnesses to the min-max equality.
\end{proof}

\section{Constructing the cover in quadratic time}\label{sec:efficient}
Let $n=|G_H|+|G_V|$ be the number of vertices of $G$. To efficiently work with
the graph $G$, we do not need to store the complete neighborhood $\Gamma(s)$ for
every $s\in G_H\cup G_V$. If the $R$-trees $T_H$ and $T_V$ are known, then it is
sufficient to store the endpoints $\partial \Gamma(s)$ of the path induced by
$\Gamma(s)$ in the corresponding $R$-tree for every $s\in G_H\cup G_V$.
\begin{definition}[Sparse representation of a tree-based bipartite graph]
    The sparse representation of the tree-based bipartite graph $G$ is the
    triplet $(T_H,T_V,\partial\Gamma)$ composed of its two $R$-trees and the
    function $\partial\Gamma: s\mapsto \partial\Gamma(s)$ mapping $G_H\cup G_V$
    to its at most two element subsets.
\end{definition}
Notice, that even if $G$ has $\Omega(n^2)$ edges, both the $R$-trees and
$\partial\Gamma$ can be stored in $\mathcal{O}(n)$ space.

Let us quickly show that computing the raster graph $G$ of a simple orthogonal
polygon $P$ takes linear time.
\begin{theorem}[\citet{gyori_generalized_1996}]\label{lemma:Rtree_contruction}
    For any given simple orthogonal polygon $P$, its $R$-trees $T_H$ and $T_V$
    can be determined in linear time.
\end{theorem}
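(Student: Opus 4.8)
The plan is to assemble the construction from a short chain of linear-time primitives, using a triangulation of $P$ as the scaffolding that lets us sidestep any sorting step. First I would bound the size of the output. In a simple orthogonal polygon the numbers $N_c$ of convex and $N_r$ of reflex vertices satisfy $N_c=N_r+4$ and $N_c+N_r=n$, so $N_r=(n-4)/2$, and each reflex vertex contributes exactly one horizontal cut (the maximal horizontal segment running from the vertex into the interior, opposite its horizontal edge). Since each such cut is a chord splitting a single region into two, the number of horizontal slices is at most $N_r+1=(n-2)/2$; hence $|G_H|,|G_V|=\mathcal{O}(n)$, and both $R$-trees have linearly many vertices and therefore, being trees, linearly many edges.

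Next I would triangulate the closed region bounded by $P$ in $\mathcal{O}(n)$ time with the algorithm of \citet{MR1115104}, storing the result in a doubly-connected edge list. From the triangulation I would extract the horizontal decomposition of $P$ into its slices: for each reflex vertex $r$ the horizontal cut through $r$ terminates on a vertical edge of $P$, and the triangulation supplies a planar connectivity structure along which these far endpoints can be located without point location. This is exactly the standard linear-time passage from a triangulation to a trapezoidal (here rectangular) decomposition, whose amortized total traversal cost is $\mathcal{O}(n)$; the version specialized to orthogonal polygons is described in detail by \citet{mezei_extremal_2017}, reconstructing the method of \citet{GyH}.

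Finally I would read off $T_H$. Each maximal horizontal cut is incident to exactly two slices — the one directly above and the one directly below it — and these are joined by an edge of $T_H$; conversely every edge of $T_H$ arises from such a shared cut. The adjacencies can therefore be enumerated as a by-product of the decomposition in $\mathcal{O}(n)$ time, and the fact that the resulting graph is connected and acyclic (a tree) was already established in \Cref{sec:prelim}. The tree $T_V$ is obtained by the identical procedure applied to $P$ rotated by $90^{\circ}$, again in $\mathcal{O}(n)$ time, which completes the construction.

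The main obstacle will be the middle step: extracting the rectangular decomposition, and in particular locating the far endpoint of each horizontal cut, within a linear budget. A naive plane sweep would incur an $\mathcal{O}(n\log n)$ sorting cost, and it is precisely the triangulation pre-processing that removes this bottleneck, since the triangles furnish an adjacency structure along which cut endpoints are found in amortized constant time. Everything else — the size count, the enumeration of tree edges, and the symmetric treatment of $T_V$ — is routine bookkeeping.
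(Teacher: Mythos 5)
The paper does not actually prove \Cref{lemma:Rtree_contruction}: it is imported verbatim from \citet{GyH}, and the only methodological hint the paper gives (after \Cref{thm:RtreeLinear}) is that the construction is pre-processed by the linear-time triangulation of \citet{MR1115104} and is described in detail in \cite{mezei_extremal_2017}. Your outline follows exactly that route --- triangulate, convert the triangulation into the horizontal slice decomposition, read off adjacencies --- so there is no divergence of approach to report. The peripheral steps you supply are sound: the count $N_r=(n-4)/2$ and the bound $|G_H|\le N_r+1$ are correct (note only that two reflex vertices can share one maximal cut, as in a plus-shaped polygon, which only strengthens the bound), and the claim that each maximal horizontal cut is incident to exactly two slices is right, because no vertical edge of $P$ can terminate in the relative interior of a cut, so there is a unique slice directly above and directly below it; this also rules out two slices meeting only in a corner, which you implicitly need for the edge set of $T_H$ to be exactly the set of cuts.

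The genuine gap is the one you name yourself: the passage from the triangulation to the rectangular decomposition in amortized linear time is asserted, not argued, and it is precisely the nontrivial content of the theorem --- everything else is bookkeeping. Saying that ``the triangulation supplies a planar connectivity structure along which these far endpoints can be located without point location'' restates the goal rather than achieving it; one has to specify the traversal (e.g., walking the dual tree of the triangulation and charging each crossed triangle edge to a bounded number of cuts) and prove the amortized bound. Since you discharge this step by citing \citet{GyH} and \cite{mezei_extremal_2017}, your text is a correct reconstruction of the skeleton of the cited argument, but not a self-contained proof; as the paper itself treats the statement as a black box, this is an acceptable level of detail here, provided the central step is explicitly flagged as borrowed rather than proved.
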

\begin{lemma}\label{lemma:sparse_representation}
    The sparse representation of the raster graph $G$ of a simple orthogonal
    polygon $P$ can be computed in linear time.
\end{lemma}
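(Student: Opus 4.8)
The plan is to reduce the claim to computing the function $\partial\Gamma$ alone, since \Cref{lemma:Rtree_contruction} already supplies $T_H$ and $T_V$ (together with the geometry of every slice) in linear time. Recall that by \Cref{lemma:pathG} the neighborhood $\Gamma(h)$ of a horizontal slice $h$ is the vertex set of a path in $T_V$, so $\partial\Gamma(h)$ is just the pair of extreme vertices of that path. First I would identify these two vertices geometrically: the vertical slices meeting $h$ are exactly those crossing the horizontal band of $h$, ordered from left to right; consecutive ones share a vertical cut and are therefore adjacent in $T_V$, so the path $\Gamma(h)$ is precisely this left-to-right chain and its endpoints are the leftmost and rightmost such slices. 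These are the vertical slices $v_{\mathrm{left}}$ and $v_{\mathrm{right}}$ lying immediately to the interior side of the left and right edges of the rectangle $h$. The symmetric statement identifies $\partial\Gamma(v)$ with the horizontal slices incident to the top and bottom edges of $v$.

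Next I would argue that each of these incidences is well defined and, more importantly, that collectively they can be read off in $\mathcal{O}(n)$ time. The left and right edges of a horizontal slice are subsegments of vertical edges of $P$, because a horizontal cut can only produce a horizontal side of a slice. Since no horizontal cut bounds a vertical slice (vertical slices arise from vertical cuts only, so their top and bottom sides lie on $\partial P$) and no polygon vertex lies in the interior of an edge, each vertical edge $e$ of $P$ has a \emph{unique} vertical slice adjacent to its interior side, spanning all of $e$. Hence $v_{\mathrm{left}}$ (resp.\ $v_{\mathrm{right}}$) is simply the slice adjacent to the polygon edge carrying the left (resp.\ right) edge of $h$, and the map from polygon edges to their adjacent perpendicular slice is exactly the edge–slice incidence data produced by the trapezoidal decomposition underlying \Cref{lemma:Rtree_contruction}. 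Stored in arrays indexed by the edges of $P$, every such query costs $\mathcal{O}(1)$.

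Finally I would assemble $\partial\Gamma$: iterating once over the $\mathcal{O}(n)$ slices and performing, for each, the two constant-time lookups described above yields $\partial\Gamma(s)$ for every $s$ in $\mathcal{O}(n)$ total time and space; together with $T_H$ and $T_V$ this is the desired sparse representation. The delicate point — and the step I expect to require the most care — is the structural claim that a polygon edge determines its adjacent perpendicular slice uniquely and that this incidence is already recorded by the trapezoidation; granting that, the running-time bound is immediate, since every slice contributes only a constant number of boundary incidences, keeping the total linear even though $G$ itself may have $\mathcal{O}(n^2)$ edges.
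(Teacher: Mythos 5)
Your proposal is correct and follows essentially the same route as the paper: both identify $\partial\Gamma(h)$ with the two vertical slices adjacent to the polygon edges carrying the left and right sides of $h$, and read this incidence off from the data already produced by the linear-time $R$-tree/trapezoidation construction. Your version merely spells out the uniqueness of the perpendicular slice adjacent to each polygon edge, which the paper leaves implicit.
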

\begin{proof}[Sketch of the proof.]
    The lemma is evident once one studies the proof of
    \Cref{lemma:Rtree_contruction}. During the construction of $T_H$ and $T_V$,
    one may link the segments of $P$ to the slices containing the segment.
    For any $h\in G_H$, take the one-one vertical segment of $P$ containing the vertical
    sides of the rectangle $h$. Clearly, the one or two vertical slices
    containing these vertical segments form the set $\partial \Gamma(h)$. One may proceed analogously for
    any $v\in G_V$.
\end{proof}
For a full technical description, the diligent reader is referred
to~\cite[Chapter~4 and Appendix~B]{mezei_extremal_2017}. From now on, we are
back to working on a general tree-based bipartite graph $G$.

\begin{theorem}[\citet{gabow_linear-time_1985}]\label{thm:NCA}
    The nearest common ancestors of $m$ pairs of vertices on an $n$-vertex tree
    can be determined in $\mathcal{O}(n+m)$ time.
\end{theorem}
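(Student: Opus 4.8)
The plan is to prove the bound via the (by now standard) reduction of nearest common ancestors to the \emph{range minimum query} (RMQ) problem, which yields $\mathcal{O}(n)$ preprocessing together with $\mathcal{O}(1)$ time per query, hence $\mathcal{O}(n+m)$ in total. The original argument of \citeauthor{MR801823} is more intricate --- it reduces the problem to complete binary trees and exploits bit operations on cleverly chosen node labels --- but the route sketched here is conceptually cleaner and suffices to establish the stated running time.

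First I would linearize the rooted tree by an \emph{Euler tour}: a depth-first traversal that records each vertex every time the traversal passes through it, producing an array $E[1\ldots 2n-1]$ of visited vertices together with an array $D$ of their depths and a table $R$ storing, for each vertex $u$, the position $R[u]$ of its first occurrence in $E$. All three are produced by a single DFS in $\mathcal{O}(n)$ time. The key observation is that $\NCA(u,v)$ is exactly the shallowest vertex encountered between the first occurrences of $u$ and $v$, so that, assuming $R[u]\le R[v]$,
\begin{equation*}
    \NCA(u,v)=E\big[\,\operatorname{argmin}_{R[u]\le p\le R[v]} D[p]\,\big].
\end{equation*}
Thus every NCA query reduces to a single range minimum query on the fixed array $D$, and it remains to answer $m$ such queries on a length-$\mathcal{O}(n)$ array in $\mathcal{O}(n+m)$ time.

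Next I would exploit the defining feature of $D$: consecutive entries differ by exactly $\pm 1$, since each step of the Euler tour either descends to a child or returns to a parent. This $\pm 1$ structure is what enables an RMQ data structure with $\mathcal{O}(n)$ space, $\mathcal{O}(n)$ preprocessing, and $\mathcal{O}(1)$ query. I would partition $D$ into consecutive blocks of length $\tfrac12\log n$; the $\mathcal{O}(n/\log n)$ block minima are precomputed, and a sparse table built on them answers any range consisting of whole blocks in $\mathcal{O}(1)$ after $\mathcal{O}(n)$ preprocessing. For queries lying inside a single block I would use the $\pm 1$ property: after subtracting its first entry, a block is determined by a $\pm 1$ string of length $\tfrac12\log n-1$, so there are only $\mathcal{O}(\sqrt{n})$ distinct block \emph{types}; tabulating the answer to every in-block query for every type costs $\mathcal{O}(\sqrt{n}\,\log^2 n)=\mathcal{O}(n)$ and is stored in a shared lookup table. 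An arbitrary query is then served by combining a suffix minimum of its left block, a whole-block range via the sparse table, and a prefix minimum of its right block --- each a single lookup.

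The main obstacle is precisely this last data structure: a naive sparse table over $D$ itself would give only $\mathcal{O}(n\log n)$ space and preprocessing, so the crux is the observation that the $\pm 1$ property collapses the number of distinct small blocks to $\mathcal{O}(\sqrt{n})$, allowing the within-block answers to be tabulated once and shared across all blocks of the same type. Granting the correctness and the $\mathcal{O}(n)$ bound of this $\pm 1$-RMQ structure, the two reductions above are routine; assembling them gives $\mathcal{O}(n)$ total preprocessing plus $\mathcal{O}(1)$ per query, i.e.\ $\mathcal{O}(n+m)$ time to determine the nearest common ancestors of all $m$ pairs, as claimed.
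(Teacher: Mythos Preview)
Your proof sketch is correct: the Euler-tour reduction to $\pm 1$ RMQ, followed by the block decomposition with $\mathcal{O}(\sqrt{n})$ block types and a sparse table over block minima, is the standard Bender--Farach-Colton argument and yields the claimed $\mathcal{O}(n+m)$ bound. Note, however, that the paper does not prove this theorem at all --- it is merely quoted as a black-box result from \citet{MR801823}, so there is no ``paper's own proof'' to compare against. As you yourself remark, the original Harel--Tarjan argument proceeds differently (via a reduction to complete binary trees and bit-tricks on path labels); your route is the more modern and conceptually cleaner one, and either suffices for the paper's purposes, which only invoke the running-time bound.
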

\begin{definition}
    Given $S\subseteq G_H$ or $S\subseteq G_V$, let $\NCA(S)$ be the nearest
    common ancestor of $S$ in $T_H$ or $T_V$, respectively.
\end{definition}

NCA queries are essential to efficiently computing on the sparse
representation of a tree-based bipartite graph. For example, the next
observation shows that one NCA query is sufficient to answer whether two
vertices of $G$ are less than or equal to one another.
\begin{observation}\label{lemma:leNCA}
    Given $a,b\in G_H$ or $a,b\in G_V$, we have $a\le b$ if and only if
    $\NCA(a,b)=a$.
\end{observation}
\begin{lemma}\label{lemma:minGammaNCA}
    Given $s\in G_H\cup G_V$, we have $\min\Gamma(s)=\NCA(\partial
    \Gamma(s))$.
\end{lemma}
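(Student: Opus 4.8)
The plan is to prove the identity $\min\Gamma(s)=\NCA(\partial\Gamma(s))$ by showing both that $\NCA(\partial\Gamma(s))$ is a lower bound for every element of $\Gamma(s)$, and that it actually lies inside $\Gamma(s)$, so that it must be the (unique) minimum. Recall from \Cref{lemma:pathG} (property (iv)/(v) of \Cref{def:basegraph}) that $\Gamma(s)$ is the vertex set of a path in the appropriate $R$-tree, and that $\partial\Gamma(s)=\{a,b\}$ denotes its two endpoints (\Cref{def:partial}); by \Cref{cor:minGamma_exists} the unique minimum $\min\Gamma(s)$ exists. Write $w:=\NCA(a,b)$.

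First I would argue that $w=\NCA(a,b)\in\Gamma(s)$. Since $\Gamma(s)=[a,b]$ is the path joining $a$ and $b$ in the tree, and the nearest common ancestor of two vertices always lies on the path connecting them (it is the unique highest point of that path), we have $w\in[a,b]=\Gamma(s)$. Hence $\min\Gamma(s)\le w$ by \Cref{cor:minGamma_le_neighbor}. Second, I would show $w\le t$ for every $t\in\Gamma(s)$, which combined with $w\in\Gamma(s)$ forces $w=\min\Gamma(s)$. By \Cref{lemma:leNCA}, $w\le a$ and $w\le b$ since $\NCA(w,a)=\NCA(\NCA(a,b),a)=\NCA(a,b)=w$ (an ancestor of both endpoints is an ancestor of each), and symmetrically for $b$. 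Now take an arbitrary $t\in\Gamma(s)=[a,b]$. The endpoints $a,b$ are the extremal elements of the path with respect to the root order; more precisely, one checks that every vertex on $[a,b]$ is $\ge w$, because $w$ is the highest (closest to the root) vertex of the path $[a,b]$, so each $t\in[a,b]$ satisfies $w\in[h_{\mathrm{root}},t]$, i.e. $w\le t$ by \Cref{def:order}.

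The one step deserving care is the claim that $w$ is the $\le$-minimum of the whole path $[a,b]$, not merely of its endpoints. The clean way is to split the path at $w$: because $w=\NCA(a,b)$, the path $[a,b]$ decomposes as $[a,w]\cup[w,b]$ with $w\le a$ and $w\le b$, and along each sub-path $[w,a]$ and $[w,b]$ the order increases monotonically away from $w$ (each such sub-path is a root-to-node segment after $w$, so its vertices are pairwise comparable and bounded below by $w$). Consequently $w\le t$ for all $t\in[a,b]$. Since also $w\in\Gamma(s)$, uniqueness of the minimum (\Cref{cor:minGamma_exists}) yields $\min\Gamma(s)=w=\NCA(\partial\Gamma(s))$.

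I expect the main obstacle to be purely notational rather than mathematical: one must be careful that $\NCA$ is taken in the correct $R$-tree ($T_V$ when $s\in G_H$ and $T_H$ when $s\in G_V$), matching the tree in which $\Gamma(s)$ forms a path, and that the root of that tree is the one fixed in \Cref{def:roots}, so that the partial order $\le$ of \Cref{def:order} and the ancestor relation detected by $\NCA$ genuinely coincide (this coincidence is exactly the content of \Cref{lemma:leNCA}). Once that alignment is fixed, the argument is the elementary observation that in a rooted tree the nearest common ancestor of the two endpoints of a path is the unique minimal (highest) vertex of that path.
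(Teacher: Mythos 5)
Your argument is correct and is exactly the one the paper has in mind: the paper's proof consists of the single remark that the claim is trivial because $\Gamma(s)$ induces a path in the appropriate $R$-tree, and your write-up simply spells out why the nearest common ancestor of the two endpoints of a tree path is the unique $\le$-minimal vertex of that path. Your closing caveat about matching the correct $R$-tree and root is well taken but purely notational, as you say.
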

\begin{proof}
    Trivial, since $\Gamma(s)$ induces a path in the appropriate $R$-tree.
\end{proof}

Recall \Cref{def:partial}. The next lemma implies that the intersection of two
neighborhoods in $G$ can be computed efficiently.
\begin{lemma}\label{lemma:intersectNCA}
    Given $s_{11},s_{12},s_{21},s_{22}\in G_H$ or
    $s_{11},s_{12},s_{21},s_{22}\in G_V$, \Cref{alg:intersection} determines the
    set $\partial ([s_{11},s_{12}]\cap [s_{21},s_{22}])$ via 7 NCA queries.
\end{lemma}
\begin{proof}
	Note, that $U=[s_{11},s_{12}]\cap [s_{21},s_{22}]$ also induces a path (or
	an empty graph) in the appropriate $R$-tree. By definition, $\partial
	U\subseteq [s_{11},s_{12}]\cap [s_{21},s_{22}]$, so the elements of
	$\partial U$ are common ancestors of a subset of
	$\{s_{11},s_{12},s_{21},s_{22}\}$. In fact, elements of $\partial U$ are
	nearest common ancestors of subsets of $\{s_{11},s_{12},s_{21},s_{22}\}$,
	because a nearer common ancestor must still be contained in both
	$[s_{11},s_{12}]$ and $[s_{21},s_{22}]$. The proof can be completed by a
	simple, but slightly tedious case analysis.
\end{proof}
\Cref{lemma:intersectNCA} also implies that $st\in
E(G)$ can be checked with 7 NCA queries since it holds if and only if
$\{s\}\cap \Gamma(t)\neq\emptyset$, which is equivalent to $\partial([s,s]\cap
\Gamma(t))\neq\emptyset$. In particular, this means that $r$-visibility
(\Cref{def:rvisionedge}) can also be checked efficiently.

\begin{algorithm}[ht]
	\SetAlgoLined
	$s_1\gets\NCA(s_{11},s_{12})$\;
	$s_2\gets\NCA(s_{21},s_{22})$\;
	$s_0\gets \NCA(s_{1},s_{2})$\;
	\If{$s_0\in \{s_1,s_2\}$}{
		$S\gets \{\NCA(s_{1i},s_{2j})\ |\ i,j\in \{1,2\} \}\setminus
		\{s_0\}$\;
		\uIf{$s_1=s_2$ and $|S|\le 1$}{
			\Return{$S\cup \{s_0\}$}\tcp*{$s_0=s_1=s_2$ is in the
	intersection}
		}
		\Else{
			\Return{$S$}\;
		}
	}
	\Return $\emptyset$\tcp*{The two paths do not intersect each other}
	\caption{Computing $\partial \big({[s_{11},s_{12}]}\cap
	{[s_{21},s_{22}]}\big)$, i.e., determining the endpoints of the intersection of two paths
from their respective endpoints in the $R$-tree}\label{alg:intersection}
\end{algorithm}

\begin{lemma}\label{lemma:linorder_BFS}
    A linear order $\prec$ on $G_H$ and $G_V$ (which is compatible with $<_b$, see
    \Cref{def:totalorder}) can be determined in $\mathcal{O}(n)$ time.
\end{lemma}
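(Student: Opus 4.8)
The plan is to recognize that the partial order $<_b$ (\Cref{def:unifiedorder}), restricted to $G_H$, is nothing more than a lexicographic order: $h_1<_b h_2$ compares first the values $\min\Gamma(h_1),\min\Gamma(h_2)\in G_V$ under the \emph{tree} order $<$ of $T_V$, and breaks ties by comparing $h_1,h_2$ under the tree order $<$ of $T_H$; the same holds for $G_V$ with the roles of the two $R$-trees exchanged. Crucially, the first key lives in $G_V$ and is compared with the tree order $<$ of \Cref{def:order}, not with $<_b$ or $<_\ell$, so there is no circular dependence between the construction on $G_H$ and the construction on $G_V$. Hence it suffices to produce, for each $R$-tree, a linear extension of its tree order, to encode both keys as integers in $\{1,\dots,n\}$, and then to stably sort by the resulting integer key pair.

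First I would root $T_H$ at $h_\mathrm{root}$ and $T_V$ at $v_\mathrm{root}$ (\Cref{def:roots}) and run a breadth-first search from each root. Numbering the vertices in BFS order yields rank functions $\mathrm{rank}_H\colon G_H\to\{1,\dots,|G_H|\}$ and $\mathrm{rank}_V\colon G_V\to\{1,\dots,|G_V|\}$ in which every parent precedes its children; consequently these ranks are linear extensions of the tree order $<$, and they are computed in $\mathcal{O}(n)$ time. Next, by \Cref{lemma:minGammaNCA} we have $\min\Gamma(h)=\NCA(\partial\Gamma(h))$, and since $|\partial\Gamma(h)|\le 2$, each value $\min\Gamma(h)$ arises from a single NCA query in $T_V$; batching all $|G_H|$ queries and invoking \Cref{thm:NCA} computes every $\min\Gamma(h)$ in $\mathcal{O}(n+|G_H|)=\mathcal{O}(n)$ total time, and symmetrically for every $\min\Gamma(v)$.

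Now assign to each $h\in G_H$ the integer pair
\begin{equation*}
    \kappa(h)=\bigl(\mathrm{rank}_V(\min\Gamma(h)),\ \mathrm{rank}_H(h)\bigr),
\end{equation*}
and let $<_\ell$ order $G_H$ by $\kappa$ lexicographically; order $G_V$ analogously. Because both coordinates of $\kappa$ are integers in $\{1,\dots,n\}$, this lexicographic sort can be carried out by two passes of counting (radix) sort in $\mathcal{O}(n)$ time, avoiding the $\mathcal{O}(n\log n)$ cost of a comparison sort. Since $\mathrm{rank}_H$ is injective, $\kappa$ is injective, so $<_\ell$ is a genuine linear order on $G_H$; keeping $G_H$ and $G_V$ sorted separately preserves the incomparability of their elements required by \Cref{def:totalorder}. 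It then remains only to verify that $<_\ell$ refines $<_b$: if $h_1<_b h_2$ because $\min\Gamma(h_1)<\min\Gamma(h_2)$, then $\mathrm{rank}_V(\min\Gamma(h_1))<\mathrm{rank}_V(\min\Gamma(h_2))$ as $\mathrm{rank}_V$ extends $<$, whence $\kappa(h_1)<\kappa(h_2)$; and if $h_1<_b h_2$ because $\min\Gamma(h_1)=\min\Gamma(h_2)$ and $h_1<h_2$, then the first coordinates coincide while $\mathrm{rank}_H(h_1)<\mathrm{rank}_H(h_2)$, again yielding $\kappa(h_1)<\kappa(h_2)$.

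I expect the only genuine obstacle to be the insistence on $\mathcal{O}(n)$ rather than $\mathcal{O}(n\log n)$: this forces the sort keys to be bounded integers so that radix sort applies, which is exactly what the BFS-rank encoding supplies. The reduction of $<_b$ to a lexicographic comparison and the linear-time computation of the $\min\Gamma$ values are then routine, given \Cref{lemma:partial_order_b,lemma:minGammaNCA,thm:NCA}.
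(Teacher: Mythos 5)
Your proof is correct and follows essentially the same route as the paper's (one-sentence) argument: a breadth-first search of each $R$-tree to linearize the ancestor order, NCA queries via \Cref{lemma:minGammaNCA} and \Cref{thm:NCA} to obtain the $\min\Gamma$ keys, and a lexicographic arrangement of the resulting integer pairs. You simply make explicit the details the paper leaves implicit, in particular the use of counting/radix sort on BFS ranks to stay within $\mathcal{O}(n)$.
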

\begin{proof}
    Recall \Cref{def:unifiedorder,def:totalorder} and \Cref{lemma:minGammaNCA}.
    By~\Cref{thm:NCA} and \Cref{lemma:minGammaNCA}, $\min\Gamma(s)$ can be
    computed in $\mathcal{O}(n)$ time for every $s\in G_H\cup G_V$
    simultaneously. Subsequently, the linear orders can be determined with simple
    breadth-first searches on the $R$-trees.
\end{proof}

It is now easy to see that \Cref{alg:main} can be emulated in $\mathcal{O}(n^3)$
time. The cardinality of the set of all possible NCA queries is $\mathcal{O}(n^2)$,
which can be computed ahead of running \Cref{alg:main}. The elements of
$\Gamma(s_i)$ can be easily listed in $\prec$ order. It only remains to
describe how to check for each $t\in
\Gamma(s_i)$ whether some $(h,v)\in W_{i-1}$ covers
$ts_i$ or not. Recall \Cref{lemma:alter_def_semiguard}. Checking whether $t\preceq h$
and $s_i\preceq v$ is trivial once the linear
order is computed. Deciding whether both $\Gamma(h)\cap\Gamma(t)$ and
$\Gamma(h)\cap\Gamma(t)$ are non-empty requires $\mathcal{O}(1)$ NCA queries by
\Cref{lemma:intersectNCA}. If $t_i\neq\emptyset$,
it is trivial to find in $\mathcal{O}(n)$ every $(h,v)\in W_{i-1}$
such that $h\in \Gamma(s_i)$ and $h\prec t_i$.

This naive approach can be improved as follows.
\begin{theorem}\label{thm:running_time}
    \Cref{alg:main} can be emulated in $\mathcal{O}(n^2)$ time (if the tree-based
    bipartite graph $G$ is encoded with sparse representation).
\end{theorem}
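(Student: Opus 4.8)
The plan is to show that the naive $\mathcal{O}(n^3)$ emulation of \Cref{alg:main} can be sped up by a factor of $n$, where the savings come from avoiding the re-examination of every semi-guard $(h,v)\in W_{i-1}$ at each iteration. The bottleneck in the naive approach is two-fold: determining $t_i$ by checking each candidate $t\in\Gamma(s_i)$ against every semi-guard currently in $W_{i-1}$, and locating a semi-guard with $h\in\Gamma(s_i)$ maximal in $<_\ell$ order on \Cref{line:max}. Both of these currently cost $\mathcal{O}(n^2)$ per iteration because $|W_{i-1}|$ and $|\Gamma(s_i)|$ can each be $\Theta(n)$. First I would precompute all $\mathcal{O}(n^2)$ pairwise NCA values (amortized $\mathcal{O}(1)$ per query via \Cref{thm:NCA}), the linear order $<_\ell$ (\Cref{lemma:linorder_BFS}), and the endpoints $\partial\Gamma(s)$ for every slice. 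This reduces every primitive test --- comparability in $<_\ell$, membership $h\in\Gamma(v)$, path-intersection via \Cref{lemma:intersectNCA}, and the value $\min\Gamma(s)=\NCA(\partial\Gamma(s))$ (\Cref{lemma:minGammaNCA}) --- to $\mathcal{O}(1)$ time.

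The key structural idea I would exploit is that the vertical components of semi-guards are \emph{preserved} (\Cref{lemma:vertical_preserved}): once a semi-guard with vertical element $v$ enters $W$, some semi-guard $(t_m,v)$ with non-decreasing horizontal element $t_m$ persists in every later $W_i$. This means the semi-guards in $W_{i-1}$ are in bijection with a set of distinct vertical slices $v$ (by \Cref{lemma:vvprime_distinct,lemma:guards_distinct}), and their horizontal components only grow in $<_\ell$ order as the algorithm proceeds. I would maintain $W_{i-1}$ indexed by its vertical components, storing for each the current horizontal element and the precomputed value $\min\Gamma(v)$. The plan is to argue that for a fixed $s_i$, deciding which edges $ts_i$ (for $t\in\Gamma(s_i)$) are already covered by $W_{i-1}$ can be done in $\mathcal{O}(n)$ total rather than $\mathcal{O}(n)$ per candidate: by \Cref{lemma:alter_def_semiguard}, $(h,v)$ covers $ts_i$ iff $t\le_\ell h$, $s_i\le_\ell v$, and $hv$ is $r$-dependent on $ts_i$, and the $r$-dependence reduces to two $\mathcal{O}(1)$ path-intersection tests. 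Sorting the candidates $t\in\Gamma(s_i)$ in $<_\ell$ order and sweeping against the (few relevant) semi-guards lets me find $t_i$, the maximal uncovered candidate, in $\mathcal{O}(n)$.

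For the modification step I would similarly find, among semi-guards $(h,v)\in W_{i-1}$ with $h\in\Gamma(s_i)$, the one with $<_\ell$-maximal $h$ in a single $\mathcal{O}(n)$ scan, since each semi-guard is tested once with an $\mathcal{O}(1)$ membership query. Updating $W_{i-1}$ to $W_i$ --- either inserting $(t_i,s_i)$ (\cref{line:newguard}) or replacing $(h,v)$ by $(t_i,v)$ (\cref{line:modify}) --- touches a single entry in the indexed structure and costs $\mathcal{O}(\log n)$ or $\mathcal{O}(1)$ with appropriate bookkeeping. Summing $\mathcal{O}(n)$ work over the $k=|G_V|\le n$ iterations yields the claimed $\mathcal{O}(n^2)$ bound, dominated by the one-time NCA precomputation.

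The main obstacle I anticipate is the covered-test during the computation of $t_i$ on \cref{line:ti}: a priori this seems to require, for each candidate $t$, a scan over all of $W_{i-1}$, giving $\mathcal{O}(n^2)$ per iteration and $\mathcal{O}(n^3)$ overall. The crux of the efficiency argument is to show that this test need not be repeated from scratch for every $t$. The plan is to use the monotonicity guaranteed by \Cref{lemma:cover_preserved} (an edge once covered stays covered) together with the convexity/inheritance properties (\Cref{lemma:inheriting_neighbor,lemma:ell_gamma_convex}) to batch the coverage decisions for all $t\in\Gamma(s_i)$ into a single $<_\ell$-sorted sweep. I expect the technical heart of the proof to be verifying that the relevant semi-guards --- those that could possibly cover some $ts_i$ --- can be identified and processed in aggregate $\mathcal{O}(n)$ time per iteration, rather than naively, so that no candidate $t$ incurs more than amortized constant re-examination against the semi-guard set.
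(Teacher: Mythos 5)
Your setup (precomputing the NCA answers, the $<_\ell$ order, and $\partial\Gamma$ so that every primitive test costs $\mathcal{O}(1)$) matches the paper, and you correctly isolate the unique bottleneck: computing $t_i$ on \cref{line:ti} without testing each of the up-to-$\Theta(n)$ candidates $t\in\Gamma(s_i)$ against each of the up-to-$\Theta(n)$ semi-guards in $W_{i-1}$. But at exactly that point the argument stops being a proof: you assert that a ``$<_\ell$-sorted sweep'' against the ``(few relevant)'' semi-guards runs in $\mathcal{O}(n)$, and then explicitly defer ``the technical heart of the proof'' to future verification. There can be linearly many relevant semi-guards, so without a reason why each contributes only amortized $\mathcal{O}(1)$ work to the sweep, the iteration still costs $\Theta(n^2)$ and nothing is gained over the naive bound. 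The appeal to \Cref{lemma:cover_preserved} and \Cref{lemma:inheriting_neighbor,lemma:ell_gamma_convex} does not fill this hole, because those lemmas say nothing about how the candidates covered by a single semi-guard are distributed along $\Gamma(s_i)$.

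The missing idea, which is the entire content of the paper's proof, is a concrete structural fact: for a semi-guard $(h,v)\in W_{i-1}$ with $\min\Gamma(v)\in\Gamma(s_i)$, the set of candidates it covers is
\[
\{\,t\in\Gamma(s_i)\mid (h,v)\text{ covers }ts_i\,\}=\Gamma(s_i)\cap C_h,\qquad C_h=\{\,t\in\Gamma(\min\Gamma(h))\mid t\le_\ell h\,\},
\]
where $C_h$ induces a path in $T_H$ (because $<_\ell$ refines $<$), so $\Gamma(s_i)\cap C_h$ is a contiguous subpath of the path $\Gamma(s_i)$ whose endpoints are obtained by $\mathcal{O}(1)$ NCA queries (\Cref{lemma:intersectNCA}). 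This reduces the coverage test to interval stabbing along a path: one traversal of $\Gamma(s_i)$ in tree-path order, incrementing and decrementing a counter at the interval endpoints, identifies every uncovered candidate and hence $t_i$ in $\mathcal{O}(n)$ per iteration. Note also that the paper's traversal follows the path $\Gamma(s_i)$ in $T_H$, not the $<_\ell$ order as you propose: a subpath of $\Gamma(s_i)$ need not be a contiguous block when $\Gamma(s_i)$ is sorted by $<_\ell$, so the counter trick is not guaranteed to work for your sweep order even if you had the interval characterization. Without that characterization (or an equivalent amortization), the $\mathcal{O}(n^2)$ bound is not established.
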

\begin{proof}
    To achieve an $\mathcal{O}(n^2)$ running time, it is sufficient to augment
    the above argument by a method that computes $t_i$ in $\mathcal{O}(n)$. Recall,
    that the set of all possible NCA queries can be computed in
    $\mathcal{O}(n^2)$ in a preprocessing phase.
    For each $t\in \Gamma(s_i)$, we will compute the number of semi-guards
    $(h,v)\in W_{i-1}$ that cover $t s_i$.

    For every $h\in G_H$, let
    \begin{equation*}
        C_h:=\{ t\in \Gamma(\min\Gamma(h))\mid t\preceq h\}.
    \end{equation*}
    Notice, that $C_h$ induces a path in $T_H$ (because $\prec$ is compatible
    with $<$), and $\partial C_h$ can be determined in
    $\mathcal{O}(n)$ time for each $h\in G_H$.

    Suppose $(h,v)\in W_{i-1}$. Define
    \begin{equation*}
        C_{(h,v)}(s_i):=\{ t\in \Gamma(s_i) \mid (h,v)\text{\ covers\ }ts_i\}.
    \end{equation*}
    Recall \Cref{def:semiguard}. If some $(h,v)$ covers a $t s_i\in E(G)$, then
    $\min\Gamma(v)\in \Gamma(s_i)$. Thus, if $\min\Gamma(v)\notin \Gamma(s_i)$,
    then $C_{(h,v)}(s_i)=\emptyset$. However, if $\min\Gamma(v)\in \Gamma(s_i)$,
    then 
    \begin{align*}
        C_{(h,v)}(s_i)&=\{ t\in \Gamma(s_i)\cap\Gamma(\min\Gamma(h))\mid t\preceq h\}=\\
                                                           &=\Gamma(s_i)\cap \{
                                                           t\in
                                                       \Gamma(\min\Gamma(h))\mid
        t\preceq h\}=\\ &=\Gamma(s_i)\cap C_h.
    \end{align*}
    For each $s_i\in G_V$, one can collect the semi-guards $(h,v)\in
    W_{i-1}$ such that $\min\Gamma(v)\in \Gamma(s_i)$ in $\mathcal{O}(n)$ time.
    For such semi-guards, by \Cref{lemma:intersectNCA}, we can determine $\partial C_{(h,v)}(s_i)$
    from $\partial \Gamma(s_i)$ and $\partial C_h$ via a constant number of NCA
    queries. Because $|W_i|\le k$, at most $\mathcal{O}(n)$ queries are required
    to determine every $C_{(h,v)}(s_i)$ such that $(h,v)\in W_i$.

    By a simple traversal of $\Gamma(s_i)$ as a \emph{path} in $T_H$ (starting
    and ending at elements of $\partial\Gamma(s_i)$) one can count the number of
    semi-guards $(h,v)\in W_{i-1}$ such that $t\in C_{(h,v)}(s_i)$. Indeed, increment
    the counter the first time an element of $\partial C_{(h,v)}(s_i)$ is
    encountered, and decrement the counter when the other element of $\partial
    C_{(h,v)}(s_i)$ is reached by the traversal.

    If the counter does not become zero at any $t\in
    \Gamma(s_i)$, then $t_i=\emptyset$. Otherwise, $t_i$ is the largest $t\in
    \Gamma(s_i)$ in $\prec$ order where the counter becomes zero.
\end{proof}

As discussed in the introduction, log-linear (and linear) algorithms are available
for class-3 (and class-2) simple orthogonal polygons. It is an open question
whether such a low-complexity algorithm exists for class-4 polygons as well.
\begin{conjecture}
    \Cref{alg:main} can be emulated in $\mathcal{O}(n\log n)$ time (if the tree-based
    bipartite graph $G$ is encoded with sparse representation).
\end{conjecture}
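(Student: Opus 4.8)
The plan is to retain the logic of \Cref{alg:main} verbatim and to attack only the two operations that make the analysis of \Cref{thm:running_time} cost $\mathcal{O}(n)$ per iteration: computing $t_i=\max_{<_\ell}\{t\in\Gamma(s_i)\mid ts_i\text{ uncovered by }W_{i-1}\}$, and locating the semi-guard $(h,v)\in W_{i-1}$ with $h\in\Gamma(s_i)$ that is $<_\ell$-maximal. Everything else is already linear: computing $<_\ell$, $\min\Gamma$, $\partial\Gamma$, and answering each NCA query in $\mathcal{O}(1)$ after $\mathcal{O}(n)$ preprocessing (\cite{MR801823}, together with \Cref{lemma:minGammaNCA,lemma:intersectNCA}). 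The engine I would use is a disjoint-set (union--find) structure carried on the static trees $T_H$ and $T_V$. Because the union tree is fixed in advance — unions only ever contract a vertex toward its parent in an $R$-tree — I would invoke the linear-time static-tree disjoint-set-union result of Gabow and Tarjan rather than the generic $\mathcal{O}(m\,\alpha(n))$ bound, so that $\mathcal{O}(n)$ union/find operations cost $\mathcal{O}(n)$ in total and the claimed bound is attainable.

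Next I would exploit that the objects the sweep touches are all paths in a single tree. By \Cref{lemma:pathG} each $\Gamma(s_i)\subseteq G_H$ is a path of $T_H$, and by the computation in the proof of \Cref{thm:running_time} the footprint of a semi-guard $(h,v)$ on such a path is $C_h\cap\Gamma(s_i)$, again a subpath of $T_H$. I would maintain on $T_H$ a \emph{next-uncovered} union--find: a covered vertex is merged with its $<_\ell$-successor along the relevant path, so that finding $t_i$ reduces to a single find from the $<_\ell$-top of $\partial\Gamma(s_i)$ that skips maximal covered runs in amortized constant time. The monotonicity of coverage, \Cref{lemma:cover_preserved} (once $ts_j$ is covered it stays covered), is what caps the total number of union operations at $\mathcal{O}(n)$, since each vertex is absorbed into a covered run at most once. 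The guard-location query I would handle inside the same structure by tagging every $h$ that is the horizontal component of a live semi-guard; since the $t_j$ are distinct (\Cref{lemma:ti_distinct,lemma:guards_distinct}) and vertical components are preserved (\Cref{lemma:vertical_preserved}), there are only $\mathcal{O}(n)$ tag insertions, and the $<_\ell$-maximal tagged vertex on $\Gamma(s_i)$ is extracted by an analogous skipping find.

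The main obstacle, and where I expect the real work to lie, is that coverage of $ts_i$ is \emph{context dependent}: whether $(h,s_j)$ covers $ts_i$ depends on $s_i$ through the side condition $\min\Gamma(s_j)\in\Gamma(s_i)$ (and through $s_i\le_\ell v$), so a single global ``covered'' marking of $T_H$ is not literally valid — a vertex covered while processing $s_i$ may be uncovered for a later $s_{i'}$. To legitimise the union--find charging I would establish a monotone re-parametrisation of the sweep: as $i$ increases, $\min\Gamma(s_i)$ marches monotonically toward the root of $T_V$, which is exactly \Cref{lemma:linear_order_min_Gamma} applied along the processing order, so the family of guards whose side condition holds evolves in a nested, non-oscillating fashion. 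The crux is to show that the covered subpaths relevant to successive $s_i$ accumulate without ever having to be ``uncovered'', so that every find is chargeable to a distinct coverage-growth or guard-activation event, of which there are only $\mathcal{O}(n)$; this is precisely the place where the semi-guard monotonicity built into \Cref{def:semiguard} and the preservation properties \Cref{lemma:vertical_preserved,lemma:cover_preserved} must be pushed through with care. Should a single global marking prove unsustainable, the fallback I would pursue is to split the sweep into $\mathcal{O}(1)$-amortized phases keyed by the value of $\min\Gamma(s_i)$ in $T_V$ and to run one static-tree union--find per contiguous region of $T_H$, reusing the same $\mathcal{O}(n)$ total-operations accounting.
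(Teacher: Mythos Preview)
The statement you are attempting is a \emph{conjecture} in the paper, not a theorem: the paper offers no proof at all, only the one-sentence hint that ``by maintaining information in a disjoint-union structure in the $R$-trees (see~\cite{MR801823}), the running time can be significantly improved.'' So there is no paper proof to compare your proposal against; your plan is in fact a fleshing-out of the author's own suggestion, and in that sense it is consonant with the paper.

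That said, what you have written is a research outline rather than a proof, and you are candid about this. You correctly isolate the two $\mathcal{O}(n)$-per-step bottlenecks, you correctly name Gabow--Tarjan static-tree union--find as the tool that could in principle bring total cost to $\mathcal{O}(n)$, and you correctly identify the genuine obstruction: coverage of $ts_i$ depends on $s_i$ through the side condition $\min\Gamma(v)\in\Gamma(s_i)$, so a single global ``covered'' marking on $T_H$ is not sound. Where the proposal remains a gap is precisely the step you flag as ``the crux'': you assert that one can arrange a monotone re-parametrisation so that covered subpaths only grow, but you do not establish it. Two concrete cautions. First, your monotonicity claim that ``$\min\Gamma(s_i)$ marches monotonically toward the root'' is not literally true: \Cref{lemma:linear_order_min_Gamma} only gives $\min\Gamma(s_{i+1})\le\min\Gamma(s_i)$ \emph{when the two minima are comparable in $T_H$}, and in general they need not be (also note $\min\Gamma(s_i)\in G_H$, not $T_V$). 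Second, the fallback of splitting into phases keyed by $\min\Gamma(s_i)$ does not obviously yield $\mathcal{O}(1)$ amortized work, since distinct phases may revisit overlapping portions of $T_H$ without a global charging argument. Until that accumulation/charging lemma is actually proved, the conjecture remains open --- which is exactly the status the paper leaves it in.
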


\section{Computing the vertex list of an \texorpdfstring{\boldmath$r$}{r}-star}\label{sec:raster_quadratic}
We are ready to show that the $r$-star covered by an $r$-guard
can be determined in $\mathcal{O}(n)$ time.
\begin{proof}[Proof of \Cref{thm:alg_typeA}]
    Recall \Cref{lemma:equivalence}.
	The sparse representation $(T_H,T_V,\partial\Gamma)$ of the raster graph
	$G$ of the simple orthogonal polygon $P$ can be determined in linear time
	according to \Cref{lemma:sparse_representation}. Subsequently, \Cref{alg:main} can
    report a minimum cardinality set of covering $r$-guards in
    $\mathcal{O}(n^2)$ time. 

    Let $(h,v)\in W_k$ be a fixed semi-guard; the set of edges covered by the
    $r$-guard $\min\Gamma(v)\min\Gamma(h)\in E(G)$ contains the edges covered by
    $(h,v)$ in $G$. By \Cref{def:rguard}, the set of edges covered by
    $\min\Gamma(v)\min\Gamma(h)$ is equal to
    \begin{equation*}
        \{ h'v'\in E(G) \mid h'v'\boxdot \min\Gamma(v)\min\Gamma(h)\}
    =\bigcup_{\substack{h'v'\in E(G) \\ h'\in \Gamma(\min\Gamma(h))\\ v'\in
        \Gamma(\min\Gamma(v))}} \{ h'v'\}=\bigcup_{\substack{v'\in
        \Gamma(\min\Gamma(v))\\ h'\in\Gamma(v')\cap
\Gamma(\min\Gamma(h))}} \{h'v'\}.
    \end{equation*}
    In the orthogonal polygon, the $r$-star covered by the pixels of these edges is
    equal to
    \begin{equation*}
        \{ h'\cap v' \mid h'v'\in E(G),\ h'v'\boxdot \min\Gamma(v)\min\Gamma(h)\}=
        \bigcup_{\substack{v'\in
        \Gamma(\min\Gamma(v))\\ h'\in\Gamma(v')\cap
        \Gamma(\min\Gamma(h))}} h'\cap v'
    \end{equation*}
    Let us take the boundary of both expressions of the last equation. Because
    the region is simply connected, the boundary is closed and connected.
    Applying \Cref{lemma:boundary} twice, we
    get
    \begin{align}
        \partial\{ h'\cap v' \mid h'v'\in E(G),\ h'v'\boxdot \min\Gamma(v)\min\Gamma(h)\}=
        \closure\left(\SymmDiff_{\substack{v'\in
        \Gamma(\min\Gamma(v))\\ h'\in\Gamma(v')\cap
        \Gamma(\min\Gamma(h))}} \partial(h'\cap v')\right)\nonumber\\
        =\closure\left(\SymmDiff_{v'\in
                \Gamma(\min\Gamma(v))}\left(\SymmDiff_{\substack{h'\in\Gamma(v')\cap
        \Gamma(\min\Gamma(h))}} \partial(h'\cap v')\right)\right)\nonumber\\
        =\closure\left(\SymmDiff_{v'\in
                \Gamma(\min\Gamma(v))}\partial\left(\bigcup_{\substack{h'\in\Gamma(v')\cap
        \Gamma(\min\Gamma(h))}}h'\cap
v'\right)\right).\label{eq:rstar_boundary}
    \end{align}
    Note that by \Cref{lemma:boundary}, the closure operator only adds
    finitely many points to the right hand expressions. 
    Let us denote
    \begin{equation*}
        R_h(v'):=\bigcup_{\substack{h'\in\Gamma(v')\cap
        \Gamma(\min\Gamma(h))}}h'\cap v'.
    \end{equation*}
    For a visualization of $R_h(v')$, see \Cref{fig:rstar}.
    \begin{figure}[ht]
    	\centering
    	\begin{tikzpicture}
            \fill[gray,opacity=0.1] (-1.5,5)--(-1.5,2.5)--(-2,2.5)--(-2,2)--(0,2)--(0,-1)--(1,-1)--(1,-2)--(2.5,-2)--(2.5,0)--(3.5,0)--(3.5,-1)--(4.5,-1)
                --(4.5,-0.5)--(8,-0.5)--(8,1.5)--(7,1.5)--(7,0.5)--(6,0.5)--(6,4.5)--(4,4.5)--(4,6.5)
                -- (1.5,6)--(1.5,3.5)--(0,3.5)--(0,5.5);
    
            \fill[red, fill opacity=0.2] (-2,2) rectangle (6,2.5);
            \fill[blue,fill opacity=0.2] (1,-2) rectangle (1.5,3.5);
    
            \fill[green,fill opacity=0.2] (2.5,0) rectangle (6,3.5);
            \fill[green,fill opacity=0.2] (1,-2) rectangle (2.5,3.5);
            \fill[green,fill opacity=0.2] (0,-1) rectangle (1,3.5);
            \fill[green,fill opacity=0.2] (-1.5,2) rectangle (0,3.5);
            \fill[green,fill opacity=0.2] (-2,2) rectangle (-1.5,2.5);
    
            \draw[dashed] (-1.5,2.0)--(-1.5,2.5);
            \draw[dashed] (0,2)--(0,3.5);
            \draw[dashed] (2.5,0)--(2.5,6.2);
            \draw[dashed] (3.5,0)--(3.5,6.4);
            \draw[dashed] (1.0,-1)--(2.5,-1);
            \draw[dashed] (4,-1)--(4,4.5);
            \draw[dashed] (4.5,-0.5)--(4.5,0);
            \draw[dashed] (4.5,3.5)--(4.5,4.5);
            \draw[dashed] (6,-0.5)--(6,0);
            \draw[dashed] (7,-0.5)--(7,0.5);
    
            \draw[dotted] (-1.5,5)--(-1.5,4);
            \draw[dotted] (0,5.5)--(0,4.5);
            \draw[dotted] (1.5,5)--(1.5,6);
            \draw (1.5,5)--(1.5,3.5)--(0,3.5)--(0,4.5);
            \draw (-1.5,4)--(-1.5,2.5)--(-2,2.5)--(-2,2)--(0,2)--(0,-1)--(1,-1)--(1,-2)--(2.5,-2)--(2.5,0)--(3.5,0)--(3.5,-1)--(4.5,-1)
                --(4.5,-0.5)--(8,-0.5)--(8,1.5)--(7,1.5)--(7,0.5)--(6,0.5)--(6,4.5)--(4,4.5)--(4,5.5)
                edge[dotted] (4,6.5);
            \draw[thick,dashed,red] (-2,2) rectangle (6,2.5);
            \draw[thick,dashed,blue] (1,-2) rectangle (1.5,3.5);
    
            \draw[thick,green] (4.5,0) rectangle (6,3.5);
    
            \node[anchor=center] at (5.25,5) {\Large $P$};
            \node[anchor=center] at (5.25,4) {$v'$};
            \node[anchor=center] at (5.25,1) {$R_h(v')$};
            \node[anchor=center] at (7,2.25) {$\min\Gamma(v)$};
            \node[anchor=center] at (-1.75,2.25) {$v$};
            \node[anchor=south] at (1.25,-2.75) {$\min\Gamma(h)$};
            \node[anchor=south] at (2,-1.75) {$h$};
    	\end{tikzpicture}
        \caption{The simple orthogonal polygon $P$ is drawn with solid segments, and
            the region bounded by it is filled with gray. 
            The slices $\min\Gamma(v)$ and $\min\Gamma(h)$ are filled with red and
            blue, respectively, while their boundaries are contoured with dashed
            lines. The rest of the vertical cuts are drawn with dashed lines, and
            most of the horizontal cuts are not shown to avoid clutter. The pixels
            of the edges covered by the edge $\min\Gamma(v)\min\Gamma(h)$ are tinted
            green. The part of $v'$ thus covered is contoured with thick green lines,
            which highlights the boundary of $R_h(v')$.
        }\label{fig:rstar}
    \end{figure}
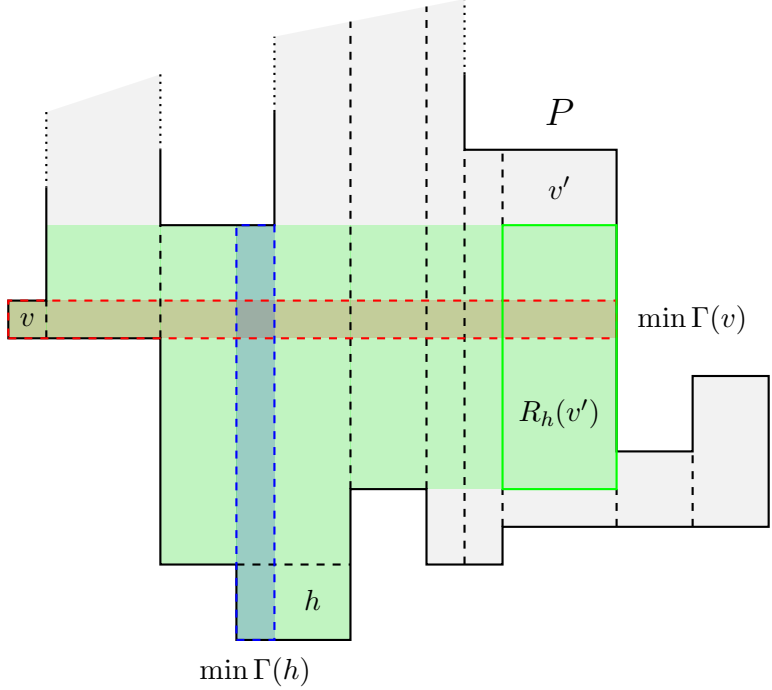
    By \Cref{lemma:convexHull}, we may write
    \begin{align}
        R_h(v')&=\mathrm{convexHull}\left(\bigcup_{\substack{h'\in\partial(\Gamma(v')\cap
        \Gamma(\min\Gamma(h)))}}h'\cap v'\right)\nonumber\\
        R_h(v')&=\mathrm{convexHull}\left(\bigcup_{\substack{h'\in\partial(\Gamma(v')\cap
        \Gamma(\min\Gamma(h)))}}\partial(h'\cap v')\right)\nonumber\\
                    \partial R_h(v')&=\partial\mathrm{convexHull}\left(\bigcup_{\substack{h'\in\partial(\Gamma(v')\cap
                    \Gamma(\min\Gamma(h)))}}\partial(h'\cap
                v')\right).\label{eq:partialRhv}
    \end{align}
	By \Cref{lemma:intersectNCA}, $\partial(\Gamma(v')\cap \Gamma(\min\Gamma(h)))$ can
    be computed with $\mathcal{O}(1)$ NCA queries. For a fixed $v'$, the
    boundary $\partial R_h(v')$ can be
    computed in $\mathcal{O}(1)$ time, because it is easily derived from
    the boundaries of two pixels. As $v'$ iterates
    through $\Gamma(\min\Gamma(v))$, in total $\mathcal{O}(n)$ NCA queries are
    sufficient to determine every $\partial R_h(v')$.

    Continuing~\cref{eq:rstar_boundary}, the boundary of the $r$-star is
    \begin{equation*}\label{eq:rstar_final}
        \partial\{ h'\cap v' \mid h'v'\in E(G),\ h'v'\boxdot \min\Gamma(v)\min\Gamma(h)\}=
        \closure\left(\SymmDiff_{v'\in \Gamma(\min\Gamma(v))} \partial R_h(v')\right).
    \end{equation*}
    Let $v',v''\in\Gamma(\min\Gamma(v))$ be two arbitrary elements. Then
    $R_h(v')$ and $R_h(v'')$ intersect if and
    only if $v'v''\in E(T_V)$ (by definition). The boundaries of $R_h(v')$ and
    $R_h(v'')$ intersect in a line
	segment or an empty set. Since $\Gamma(\min\Gamma(v))$ is the vertex set of
    a path in $T_V$, the orthogonal polygon bounding the $r$-star can be assembled in
	$\mathcal{O}(n)$ time for each $(h,v)\in W_k$.
\end{proof}

\renewcommand*{\bibfont}{\small}
{\sloppy\printbibliography{}}

\appendix

\section{Proofs postponed from
Section~\ref{sec:refined}}\label{sec:postponed_refined}
\begin{proof}[Proof of \Cref{lemma:inheriting_neighbor}]
    We have three cases.
    \begin{itemize}
        \item If $h_1$ and $h_2$ are comparable: since $h_1\preceq h_2$, we
            must have $h_1\le h_2$. As $h_2$ is a common neighbor of $v_1$ and $v_2$, by \Cref{lemma:linear_order_min_Gamma}, we have
            \begin{equation*}
                \min\Gamma(v_1)\le\min\Gamma(v_2)\le h_1\le h_2.
            \end{equation*}
            Since both $h_2,\min\Gamma(v_1)\in \Gamma(v_1)$, we have
            \begin{equation*}
                h_1\in [\min\Gamma(v_1), h_2]\subseteq
                \Gamma(v_1),
            \end{equation*}
            which is what we wanted to show.
        \item If $h_1$ and $h_2$ are not comparable, and
            $\min\Gamma(h_2)=v_\mathrm{root}$: we have $v_1,v_2\in
            \Gamma(h_2)$. By \Cref{lemma:leaf_comparable}, we know that
            $v_1$ and $v_2$ are comparable. Because $v_1\preceq
            v_2$, it follows that $v_1\le v_2$. Also, $v_1\in\Gamma(h_2)$
            implies
            \begin{equation*}
                \min\Gamma(h_2)\le v_1\le v_2.
            \end{equation*}
            We assumed that $v_2\in \Gamma(h_1)$.
            Because $v_2$ is a common neighbor of $h_1$ and $h_2$,
            \Cref{lemma:linear_order_intersecting_neighborhoods} implies
            $\min\Gamma(h_2)\in \Gamma(h_1)$. Therefore
            \begin{equation*}
                v_1\in [\min\Gamma(h_2), v_2]\subseteq
                \Gamma(h_1),
            \end{equation*}
            which implies $h_1\in \Gamma(v_1)$.

        \item If $h_1$ and $h_2$ are not comparable, and
            $\min\Gamma(h_2)\neq v_\mathrm{root}$:
            because $v_2$ is a common neighbor of $h_1$ and $h_2$,
            \Cref{lemma:linear_order_intersecting_neighborhoods} implies
            $\min\Gamma(h_2)\in \Gamma(h_1)$.
            Notice, that $h_1$ and $h_2$ are common neighbors of both
            $v_2$ and $\min\Gamma(h_2)$. Because the neighborhoods of
            both $v_2$ and $\min\Gamma(h_2)$ induce paths in $T_H$.
            By \Cref{lemma:minsarecomparable}, $\min\Gamma(v_2)$ and
            $\min\Gamma\Gamma(h_2)$ are comparable. However, due to
            \Cref{lemma:minless_comparable}, there cannot be strict inequality
            between them, therefore
            \begin{equation*}
                \min\Gamma\Gamma(h_2)=\min\Gamma(v_2)\le h_1,h_2.
            \end{equation*}
            Because $h_2$ is a common neighbor of $v_1$ and $v_2$,
            by \Cref{lemma:linear_order_intersecting_neighborhoods} we
            have $\min\Gamma(v_1)\le\min\Gamma(v_2)$. In addition,
            $h_2\in\Gamma(v_1)$ is equivalent to $v_1\in\Gamma(h_2)$,
            which implies $\min\Gamma(h_2)\le v_1$. It follows that
            \begin{equation*}
                \min\Gamma\Gamma(h_2)\le \min\Gamma(v_1)\le
                \min\Gamma(v_2),
            \end{equation*}
            so we must have equality:
            \begin{equation*}
                \min\Gamma(v_1)=\min\Gamma(v_2)=\min\Gamma\Gamma(h_2).
            \end{equation*}
            It follows that $v_1,v_2\in \Gamma(\min\Gamma\Gamma(h_2))$.

            Note, that $v_1,v_2\in \Gamma(h_2)$ implies
            $\min\Gamma(h_2)\le v_1,v_2$.
            Since $\min\Gamma(h_2)\neq v_\mathrm{root}$, by
            \Cref{lemma:minGGcloser} we have
            \begin{equation*}
                \min\Gamma\Gamma\Gamma(h_2)<\min\Gamma(h_2)\le v_1,v_2.
            \end{equation*}
            We apply \Cref{lemma:helly2} with
            $s_0=\min\Gamma\Gamma(h_2)$. It follows that $v_1$ and $v_2$
            are comparable, therefore
            \begin{equation*}
                \min\Gamma(h_2)\le v_1\le v_2.
            \end{equation*}
            As in the previous case, it follows that $h_1\in
            \Gamma(v_1)$.
    \end{itemize}
\end{proof}

\begin{proof}[Proof of \Cref{lemma:ell_gamma_convex}]
    Notice, that $h_1,h_2\in \Gamma(v_3)$ and $h_2\in \Gamma(v_1)$; by
    \Cref{lemma:inheriting_neighbor}, it follows that $h_1\in \Gamma(v_1)$.
    In other words, $h_1,h_2,h_3\in \Gamma(v_1)$ and $v_1,v_2,v_3\in
    \Gamma(h_1)$. By \Cref{lemma:linear_order_intersecting_neighborhoods}, we have
	\begin{equation*}
		\min\Gamma(h_1)\le\min\Gamma(h_2)\le\min\Gamma(h_3),\\
		\min\Gamma(v_1)\le\min\Gamma(v_2)\le\min\Gamma(v_3).
	\end{equation*}
    Note, that $\min\Gamma(h_2)\le v_3$ and $\min\Gamma(h_3)\le v_2$.
    Furthermore, $\min\Gamma(h_2),\min\Gamma(h_3),v_2,v_3\in \Gamma(h_1)$.
    Similarly, $\min\Gamma(v_2)\le h_3$ and $\min\Gamma(v_3)\le h_2$.

    \begin{itemize}
        \item If $v_2$ and $v_3$ are comparable: because $v_2\preceq
            v_3$, we have $v_2\le v_3$.

            If $v_1$ is comparable to $v_2$, then $v_1\le v_2\le v_3$. The last
            inequality implies $v_2\in [v_1,v_3]\subseteq \Gamma(h_2)$.

            Suppose $v_1$ is not comparable to $v_2$. Then
            \Cref{lemma:incomparable_common_neighbors} implies
            $\min\Gamma(h_1)=\min\Gamma(h_2)$.
            Because $v_2\in \Gamma(h_1)$, we have
            \begin{equation*}
                \min\Gamma(h_1)\le v_2\le v_3.
            \end{equation*}
            It follows that
            \begin{equation*}
                v_2\in [\min\Gamma(h_1),v_3]=[\min\Gamma(h_2),v_3]\subseteq
                \Gamma(h_2),
            \end{equation*}
            which implies that $h_2\in \Gamma(v_2)$.
        \item If $h_2$ and $h_3$ are comparable, then the previous reasoning
            holds by symmetry, so we can conclude that $h_2\in \Gamma(v_2)$,
            which is what we want.
    \end{itemize}

    Suppose from now on that $h_2,h_3$ are not comparable and $v_2,v_3$ are not
    comparable either.
    Notice, that \Cref{lemma:incomparable_common_neighbors} implies that
    $\min\Gamma(h_1)=\min\Gamma(h_2)$ and $\min\Gamma(v_1)=\min\Gamma(v_2)$.
    It follows that $h_2\in\min\Gamma(h_1)$.

    Since $h_1\in\Gamma(v_1)$ and $v_1\in \Gamma(h_2)$, we also have
    $\min\Gamma(h_1)\le v_1$ and $\min\Gamma(v_1)\le h_2$. As $v_1\in
    \Gamma(\min\Gamma(v_1))$, and $h_1\in\Gamma(\min\Gamma(h_1))$,
    \Cref{lemma:inheriting_neighbor} implies that
    $\min\Gamma(v_1)\in\Gamma(\min\Gamma(h_1))$. Also, $\min\Gamma\Gamma(h_1)\le
    \min\Gamma(v_1)$ due to \Cref{cor:minGG2}.

    All in all, $h_1,h_2,\min\Gamma(v_1)\in\Gamma(\min\Gamma(h_1))$. As
    $h_2,h_3\in \Gamma(v_1)$, we have
    \begin{equation*}
        \min\Gamma\Gamma(h_1)\le \min\Gamma(v_1)\le h_2,h_3.
    \end{equation*}
    Now \Cref{lemma:helly2} implies that the first inequality cannot be
    strict, thus
    \begin{equation*}
        \min\Gamma\Gamma(h_1)=\min\Gamma(v_1).
    \end{equation*}
    By symmetry, we also obtain that
    \begin{equation*}
        \min\Gamma\Gamma(v_1)=\min\Gamma(h_1).
    \end{equation*}
    Putting the two equations together, we obtain
    \begin{equation*}
        \min\Gamma(h_1)=\min\Gamma\Gamma(v_1)=\min\Gamma(\min\Gamma(v_1))=\min\Gamma(\min\Gamma\Gamma(h_1))=\min\Gamma\Gamma\Gamma(h_1).
    \end{equation*}
    By \Cref{lemma:minGGcloser}, this is possible only if
    $\min\Gamma(h_1)=v_\mathrm{root}$. Then \Cref{lemma:leaf_comparable} implies
    that $v_2,v_3\in \Gamma(h_1)$ are pairwise comparable, which is a
    contradiction.
\end{proof}

\section{Proof postponed from
Section~\ref{sec:indep}}\label{sec:postponed_indep}
The following argument is a quite complicated proof by induction. It is easy
to get disoriented by the main \textbf{for}-loops of
\Cref{alg:main,alg:independent} running in increasing and decreasing order of $i$,
respectively.
\begin{proof}[Proof of \Cref{lemma:independence_induction}]
    By induction on increasing $i$. We distinguish four main cases. The first
    three cases are relatively straightforward to resolve, while the last case
    leads to a far more complicated analysis with several nested subcases.

    \paragraph{Case 1: \boldmath$W_{i}=W_{i-1}$.} Then $t_i=\emptyset$, so
    $I_i=I_{i+1}$ and the claim follows by induction.

    \paragraph{Case 2: \boldmath$W_i=W_{i-1}+(t_i,s_i)$.} Note, that this is the case when
    $i=1$. We have two sub-cases.
    \begin{itemize}
        \item If $\exists t_n s_n\in I_{i+1}$ that is $r$-dependent on $t_i
            s_i$: by \Cref{alg:independent}, we have $I_{i+1}=I_{i}$. By
            \Cref{lemma:Ii_elements}, we have $n>i$, therefore $s_n\prec s_i$.
            \begin{itemize}
                \item If $t_n\preceq t_i$: then by
                    \Cref{lemma:alter_def_semiguard}, $(t_i,s_i)$ covers $t_n s_n$.
                \item If $t_i\prec t_n$: then by $r$-dependence,
                    $\min\Gamma(s_i)\in \Gamma(s_n)$ and
                    $\min\Gamma(t_n)\in \Gamma(t_i)$. Note, that
                    $\min\Gamma(s_i)\le t_i\preceq t_n$ and $\min\Gamma(t_n)\le
                    s_n\preceq s_i$. From \Cref{lemma:ell_gamma_convex} it
                    follows that $t_i\in \Gamma(s_n)$.
            \end{itemize}
            In any case, $(t_i,s_i)\in W_i$ satisfies the inductive hypothesis,
            and we are done by induction on the elements of $W_{i-1}$
            (which is empty for $i=1$).

        \item If $t_i s_i$ is not $r$-dependent on any edge in $I_{i+1}$: by
            \Cref{alg:independent}, we have $t_i s_i\in I_i$ and
            $I_{i+1}=I_i\setminus \{ t_i s_i \}$.

            By definition, $W_{i-1}$ does not cover $t_i s_i$.

            If $\exists (h,v)\in W_{i-1}$ such that $h\in \Gamma(s_i)$
            and $h\prec t_i$, then $W_{i-1}\setminus W_i$ cannot be
            empty (see \cref{line:ifextend} in \Cref{alg:main}), which
            contradicts the initial assumption of this case.

            It follows that any $(h,v)\in W_{i}\setminus \{(t_i,s_i)\}$
            satisfies the inductive hypothesis for $W_i$ by the inductive
            hypothesis on $W_{i-1}$ (which is empty for $i=1$).
            By~\eqref{eq:Idecreasing}, we have $t_i s_i\in I_{i}\subseteq I_1$.
            All in all, every element of $W_i$ satisfies the inductive
            hypothesis.
    \end{itemize}

	\paragraph{Case 3: \boldmath$W_i=W_{i-1}-(t_m,s_j)+(t_i,s_j)$ and $t_i s_i$
	is $r$-independent from every edge in $I_{i+1}$.} By the explicit design of
	\Cref{alg:independent}, we are (almost trivially) done by induction.
	\begin{itemize}
        \item If $t_i s_i\in I_i$: from~\eqref{eq:Idecreasing}, we get $t_i
            s_i\in I_i\subseteq I_1$. We are done by induction.

		\item If $t_i s_i\notin I_i$: by \Cref{alg:independent}, we have
			$I_{i+1}=I_i$. Furthermore, $(t_m,s_j)$ does not cover any element
			of $I_{i+1}$, and $\nexists t_n s_n\in I_{i+1}$ such that $t_m\in
			\Gamma(s_n)$ and $t_m\prec t_n$. Thus, by induction on
			$(t_m,s_j)\in W_{i-1}$, there $\exists q\in\mathbb{N}$ such that
			$1\le q\le i-1$, $t_q s_q\in I_1$ and $(t_q,s_j)\in W_q$. This $q$
			is also a good choice for $(t_i,s_j)\in W_i$, and we are done by
			induction on the rest of the elements of $W_i$.
	\end{itemize}

	\paragraph{Case 4: \boldmath$W_i=W_{i-1}-(t_m,s_j)+(t_i,s_j)$ and $\exists
	t_p s_p\in I_{i+1}$ that is $r$-dependent on $t_i s_i$.} By
	\Cref{alg:independent}, we have $I_{i+1}=I_i$. By \Cref{lemma:ti_distinct},
	we have $j\le m<i$. Note, that by \Cref{lemma:alg_order}, we have $t_m\in
	\Gamma(s_i)$, $t_m\prec t_i$, $\min\Gamma(t_i)\in \Gamma(t_m)$, and
	$\min\Gamma(s_j)\in \Gamma(s_i)$. By \Cref{lemma:alg_semiguards}, we have
    $\min\Gamma(s_j)\preceq t_i$ and $\min\Gamma(s_j)\in \Gamma(s_i)$. By
    \Cref{lemma:Ii_elements}, we have $p>i$, therefore $s_p\prec s_i$. First,
    we deal with two cases where using the inductive hypothesis is easy.
	\begin{itemize}
		\item If $t_i\prec t_p$: then by $r$-dependence,
			$\min\Gamma(s_i)\in \Gamma(s_p)$ and
			$\min\Gamma(t_p)\in \Gamma(t_i)$. Therefore
			\Cref{lemma:ell_gamma_convex} applies to
			$\min\Gamma(s_i)\le t_i\preceq t_p$ and
			$\min\Gamma(t_p)\le s_p\preceq s_i$, so $t_i\in
			\Gamma(s_p)$. It follows that $(t_i,s_j)\in W_i$
			satisfies the inductive hypothesis, and
			the rest of the statement follows by induction.

		\item If $\min\Gamma(s_j)\preceq t_p\preceq t_i$: by $r$-dependence,
			$\min\Gamma(t_i)\in \Gamma(t_p)$. Note, that
			$\min\Gamma(s_i)\le\min\Gamma(s_j)\preceq t_m$.
			\begin{itemize}
				\item If $\min\Gamma(t_i)\preceq s_p$: we have
					$\min\Gamma(s_i)\preceq \min\Gamma(s_j)\preceq t_p$ and
					$\min\Gamma(t_i)\preceq s_p\preceq s_i$.
					By \Cref{lemma:ell_gamma_convex}, we have
					$\min\Gamma(s_j)\in \Gamma(s_p)$.

				\item If $s_p\preceq \min\Gamma(t_i)$: note,
					that $\min\Gamma(s_j)\preceq t_p$ and
					$s_p\preceq \min\Gamma(t_i)$, so by
					\Cref{lemma:inheriting_neighbor}, we have
					$\min\Gamma(s_j)\in \Gamma(s_p)$.
			\end{itemize}
			In any case, $(t_i,s_j)$ covers
			$t_p s_p \in I_{i+1}$, and we are done by induction.
	\end{itemize}

	Suppose from now on, that $t_p\prec \min\Gamma(s_j)$. By
	$r$-dependence, $\min\Gamma(s_i)\in \Gamma(s_p)$, $\min\Gamma(t_i)\in
    \Gamma(t_p)$. As $\min\Gamma(s_j)\preceq t_m$, we have
	$t_p\prec t_m\prec t_i$.

    \medskip

    By applying \Cref{lemma:inheriting_neighbor} to $t_p\prec t_m$ and
    $\min\Gamma(t_m)\le \min\Gamma(t_i)$, we conclude that
    \begin{equation*}
        \min\Gamma(t_m)\in \Gamma(t_p).
    \end{equation*}
	We have three more sub-cases. Recall that $I_{i+1}=I_i$ in
    this case. Note, that $(t_m,s_j)\in W_{i-1}$
	satisfies the inductive hypothesis.
	\begin{itemize}
		\item If $\exists t_n s_n\in I_{i}$ such that
			$(t_m,s_j)$ covers $t_n s_n$: then
			$\min\Gamma(t_m)\in \Gamma(t_n)$ and
			$\min\Gamma(s_j)\in \Gamma(s_n)$.
			Note, that $\min\Gamma(s_i)\le \min\Gamma(s_j)$
			and $s_n\prec s_i$. By
			\Cref{lemma:inheriting_neighbor}, we have
			$\min\Gamma(s_i)\in \Gamma(s_n)$.

			It follows that $\min\Gamma(s_i)\in
			\Gamma(s_n)\cap\Gamma(s_p)$ and $\min\Gamma(t_m)\in
			\Gamma(t_n)\cap\Gamma(t_p)$. By
			\Cref{lemma:Ii_indep}, $I_{i+1}$ is a
			set of pairwise $r$-independent edges, so we must have
			$p=n$. Therefore $(t_m,s_j)$ covers $t_p s_p$, which
			implies that $\min\Gamma(s_j)\in \Gamma(s_p)$. It
			follows that $(t_i,s_j)$ also covers $t_p s_p$,
			therefore we are done by induction.

		\item If $\exists t_n s_n\in I_{i}$ such that
			$t_m\in \Gamma(s_n)$ and $t_m\prec t_n$:
            note, that \Cref{lemma:inheriting_neighbor} applies to $t_m\prec
            t_n$ and $\min\Gamma(t_n)\le s_n$, therefore
			$\min\Gamma(t_n)\in \Gamma(t_m)$.
            \Cref{lemma:inheriting_neighbor} also applies to
            $s_n\preceq s_i$ and $\min\Gamma(s_i)\le t_m$,
			thus
            \begin{equation*}
                \min\Gamma(s_i)\in \Gamma(s_n).
            \end{equation*}
            We have three sub-cases.
			\begin{itemize}
				\item If $\min\Gamma(t_n)\preceq \min\Gamma(t_i)$:
					note that $t_p\preceq t_m$, so by
					\Cref{lemma:inheriting_neighbor}, we have
					$\min\Gamma(t_n)\in \Gamma(t_p)$.
					Thus $t_n s_n$ and $t_p s_p$ are $r$-dependent, so we
					must have $p=n$ by \Cref{lemma:Ii_indep}. This is a contradiction
					because $t_p\preceq t_m\prec t_n$.

                \item If $\min\Gamma(t_i)\prec \min\Gamma(t_n)$ and $s_n\le
                    s_i$: then $\min\Gamma(t_n)\le s_n\le s_i$ and
                    $\min\Gamma(t_i)\le s_i$. By \Cref{lemma:le_comparable},
                    $\min\Gamma(t_i)$ and $\min\Gamma(t_n)$ are comparable, so
                    we must have $\min\Gamma(t_i)<\min\Gamma(t_n)$. This implies
                    that $t_i\prec t_n$ and
					\begin{equation*}
                        s_n\in [\min\Gamma(t_n), s_i]\subseteq [\min\Gamma(t_i),
                        s_i]\subseteq \Gamma(t_i),
					\end{equation*}
                    i.e., $t_i\in \Gamma(s_n)$. Therefore $(t_i,s_j)$ satisfies
                    the inductive hypothesis for $I_{i+1}$ and we are done by
                    induction.

                \item If $\min\Gamma(t_i)\prec \min\Gamma(t_n)$ and $s_n$ is
                    not comparable to $s_i$: note, that $s_i,s_n\in
                    \Gamma(t_m)$. Observe, that \Cref{lemma:inheriting_neighbor}
                    applies to $\min\Gamma(s_i)\le t_m$ and $s_n\preceq s_m$,
                    thus
                    \begin{equation*}
                        s_i,s_n\in \Gamma(\min\Gamma(s_i)).
                    \end{equation*}

                    Note, that $\min\Gamma(s_i)\le t_m$ and $\min\Gamma(t_n)\le
                    s_n$, and \Cref{lemma:inheriting_neighbor} applies,
                    therefore
                    \begin{equation*}
                        \min\Gamma(t_n)\in \Gamma(\min\Gamma(s_i)).
                    \end{equation*}
                    Similarly, since $\min\Gamma(t_i)\le s_i$,
                    \begin{equation*}
                        \min\Gamma(t_i)\in \Gamma(\min\Gamma(s_i)).
                    \end{equation*}
					It follows that $\min\Gamma(t_n)\in [\min\Gamma(t_i),s_n]$.
					We also have $s_p\in \Gamma(\min\Gamma(s_i))$.

					Note, that $s_i$ and $s_n$ cut
					$\Gamma(\min\Gamma(s_i))$ into three parts.
					Since $s_p\prec s_i$, either $s_n<s_p$
					or $s_p\le s_n$ or $s_p\le s_i$ holds.

                    \medskip

					If $s_n<s_p$, then $\min\Gamma(t_n),\min\Gamma(t_p)\le s_p$
                    are comparable by \Cref{lemma:le_comparable}, so
					\begin{equation*}
						\min\Gamma(t_n)\in
						[\min\Gamma(t_i),s_n]\subseteq
						[\min\Gamma(t_i),s_p]\subseteq
						\Gamma(t_p).
					\end{equation*}
					Thus if $s_n<s_p$, then $t_n s_n$ and $t_p s_p$ are $r$-dependent, so we
					must have $p=n$ by \Cref{lemma:Ii_indep}. However, $p=n$
                    contradicts $t_p\preceq t_m\prec t_n$.

                    \medskip

                    It follows that $s_p\le s_i$ or $s_p\le s_n$ holds. Since
                    $s_i$ is not comparable to $s_n$, and $s_i,s_n,s_p\in
                    \Gamma(\min\Gamma(s_i))$, it follows that
                    \begin{equation*}
                        s_p\in [s_i,s_n]\subseteq \Gamma(\min\Gamma(s_i)).
                    \end{equation*}
					We have $s_i,s_n\in \Gamma(t_m)$. Note,
					that $\min\Gamma(s_j)\preceq t_m$ and
					$s_n,s_i\preceq s_m$. By
					\Cref{lemma:alg_semiguards},
					$\min\Gamma(s_j),t_m\in \Gamma(s_m)$.
                    By \Cref{lemma:inheriting_neighbor}, it follows that
                    $s_n,s_i\in \Gamma(\min\Gamma(s_j))$. Then
					\begin{equation*}
						s_p\in [s_i,s_n]\subseteq
						\Gamma(\min\Gamma(s_j)).
					\end{equation*}
                    Thus we have shown that $(t_i,s_j)$ covers $t_p s_p$, and we
                    are done by induction.
			\end{itemize}

        \item If $\nexists t_n s_n\in I_{i}$ such that $(t_m,s_j)$ covers $t_n
            s_n$, and $\nexists t_n s_n\in I_{i}$ such that $t_m\in \Gamma(s_n)$
            and $t_m\prec t_n$: by induction, there $\exists q\in\mathbb{N}$
            such that $1\le q\le i-1$, $t_q s_q\in I_1$ and $(t_q,s_j)\in W_q$.
            Since $q$ is a good choice for $(t_i,s_j)\in W_i$ as well, the
            inductive hypothesis holds for $W_i$.
	\end{itemize}
\end{proof}
\end{document}